\numberwithin{equation}{section}
\providecommand{\abs}[1]{\left\vert#1\right\vert}
\providecommand{\norm}[1]{\left\Vert#1\right\Vert}
\providecommand{\pnorm}[2]{\left\Vert#1\right\Vert_{L^{#2}}}
\providecommand{\Rn}[1]{\mathbb{R}^{#1}}
\providecommand{\jump}[1]{\left\llbracket #1 \right\rrbracket }
\def\wstar{\overset{*}{\rightharpoonup}}
\def\nab{\nabla}
\def\dt{\partial_t}
\def\hal{\frac{1}{2}}
\def\ep{\varepsilon}
\def\rest{\hskip 1pt{\hbox to 10.8pt{\hfill\vrule height 7pt width 0.4pt depth 0pt\hbox{\vrule height 0.4pt
width 7.6pt depth 0pt}\hfill}}}
\def\evalu{\hskip 1pt{\hbox to 2pt{\hfill \vrule height -6pt width 0.4pt depth0pt}}}
\DeclareMathOperator{\diverge}{div}
\DeclareMathOperator{\supp}{supp}
\newtheorem{lem}{Lemma}[section]
\newtheorem{cor}[lem]{Corollary}
\newtheorem{prop}[lem]{Proposition}
\newtheorem{thm}[lem]{Theorem}
\newtheorem{remark}[lem]{Remark}
\newtheorem{definition}[lem]{Definition}
\title{Compressible, inviscid Rayleigh-Taylor instability}
\author{Yan Guo\footnote{Supported in part by NSF grant DMS-0905255 and Chinese NSF grant 10828103.}\, and Ian Tice\footnote{Supported by an NSF
Postdoctoral Research Fellowship}\\
{\small Brown University, Division of Applied Mathematics}\\
{\small 182 George St., Providence, RI 02912}\\
{\small\tt guoy@dam.brown.edu, tice@dam.brown.edu} }
\date{November 20, 2009}
\begin{document}

\maketitle

\begin{abstract}
We consider the Rayleigh-Taylor problem for two compressible, immiscible, inviscid, barotropic fluids evolving with a free interface in the presence of a uniform gravitational field.  After constructing Rayleigh-Taylor steady-state solutions with a denser fluid lying above the free interface with the second fluid, we turn to an analysis of the equations obtained from linearizing around such a steady state.  By a natural variational approach, we construct normal mode solutions that grow exponentially in time with rate like $e^{t \sqrt{\abs{\xi}}}$, where $\xi$ is the spatial frequency of the normal mode.  A Fourier synthesis of these normal mode solutions allows us to construct solutions that grow arbitrarily quickly in the Sobolev space $H^k$, which leads to an ill-posedness result for the linearized problem.  Using these pathological solutions, we then demonstrate ill-posedness for the original non-linear problem in an appropriate sense.  More precisely, we use a contradiction argument to show that the non-linear problem does not admit reasonable estimates of solutions for small time in terms of the initial data.
\end{abstract}

%%%%%%%%%%%%%%%%%%%%%%%%%%%%%%%%%%%%%%%%%%%%%%%%%%%%%%
\section{Introduction}
%%%%%%%%%%%%%%%%%%%%%%%%%%%%%%%%%%%%%%%%%%%%%%%%%%%%%%

%%%%%%%%%%%%%%%%%%%%%%%%%%%%%%%%%%%%%%%%%%%%%%%%%%%%%%
\subsection{Formulation in Eulerian coordinates}
%%%%%%%%%%%%%%%%%%%%%%%%%%%%%%%%%%%%%%%%%%%%%%%%%%%%%%

This paper concerns the compressible, inviscid Rayleigh-Taylor problem in an infinite slab $\Omega:= \Rn{2} \times (-m,\ell)\subset \Rn{3}$.  For this we consider two distinct, immiscible, inviscid, compressible, barotropic fluids evolving within $\Omega$ for time $t\ge 0$.  The fluids are separated from one another by a moving free boundary surface $\Sigma(t)$ that extends to infinity in every horizontal direction; this surface divides  $\Omega$ into two time-dependent, disjoint, open subsets $\Omega_\pm(t)$ so that $\Omega =  \Omega_+(t) \sqcup  \Omega_-(t) \sqcup \Sigma(t)$ and $\Sigma(t) = \bar{\Omega}_+(t) \cap \bar{\Omega}_-(t)$.  The fluid occupying $\Omega_+(t)$ is called the ``upper fluid,'' and the second fluid, which occupies $\Omega_-(t)$, is called the ``lower fluid.''  The two fluids are described by their density and velocity functions, which are given for each $t\ge 0$ by 
\begin{equation}
  \rho_\pm(\cdot,t) :\Omega_\pm(t) \rightarrow \Rn{+}=(0,\infty) \text{ and } u_\pm(\cdot,t) :\Omega_\pm(t) \rightarrow \Rn{3}
\end{equation}
respectively.  We shall assume that at a given time $t\ge 0$ the density and velocity functions have well-defined traces onto $\Sigma(t)$.

We require that the fluids be sufficiently smooth to satisfy the pair of compressible Euler equations:
\begin{equation}\label{eulerian_equations}
 \begin{cases}
  \dt \rho_\pm + \diverge(\rho_\pm u_\pm) = 0  & \text{for } t>0,  x \in \Omega_\pm(t) \\
  \rho_\pm (\dt u_\pm + u_\pm \cdot \nab u_\pm) + \nab(P_\pm(\rho_\pm)) = -g \rho_\pm e_3 & \text{for } t>0,  x \in \Omega_\pm(t).   
 \end{cases}
\end{equation}
Here we have written $g>0$ for the gravitational constant, $e_3 = (0,0,1)$ for the vertical unit vector, and $-ge_3$ for the acceleration due to gravity.   We have assumed a general barotropic pressure law of the form $P_\pm =  P_\pm(\rho)\ge 0$ with  $P_\pm\in C^\infty((0,\infty))$ and strictly increasing.  We will also assume that  $1/P'_{\pm}\in L^\infty_{loc}((0,\infty))$.  Finally, in order to produce the Rayleigh-Taylor instability, i.e. construct a steady state solution with an upper fluid of greater density at $\Sigma(t)$, we will assume that 
\begin{equation}\label{Z_def}
 Z:=\{z\in(0,\infty) \;\vert\; P_-(z) > P_+(z) \text{ and } P_-(z) \in P_+((0,\infty))   \} \neq \varnothing.
\end{equation}
In particular this requires the pressure laws to be distinct, i.e. $P_- \neq P_+$.  For a general discussion of the physics related to these equations and to the Rayleigh-Taylor instability, we refer to \cite{kull} and the references therein.

A standard assumption is that both the normal component of the velocity and pressure must be continuous across a free boundary between two inviscid, immiscible fluids \cite{we_la}.  This requires us to enforce the jump conditions  
\begin{equation}\label{eulerian_jumps}
 \begin{cases}
   (\nu \cdot u_+)\vert_{\Sigma(t)} -  (\nu \cdot u_-)\vert_{\Sigma(t)}  = 0 \\
   (P_+(\rho_+))\vert_{\Sigma(t)} - (P_-(\rho_-))\vert_{\Sigma(t)}  =0,
 \end{cases}
\end{equation}
where we have written the normal vector to $\Sigma(t)$ as $\nu$ and $f\vert_{\Sigma(t)}$ for the trace of a quantity $f$ on $\Sigma(t)$.  We will also enforce the condition that the normal component of the fluid velocity vanishes at the fixed upper and lower boundaries; we implement this via the boundary condition 
\begin{equation}
 u_-(x_1,x_2,-m,t) \cdot e_3 = u_+(x_1,x_2,\ell,t) \cdot e_3 =0 \text{ for all } (x_1,x_2)\in \Rn{2}, t \ge 0.
\end{equation}

The motion of the free interface is coupled to the evolution equations for the fluids \eqref{eulerian_equations} by requiring that the surface be advected with the fluids.  This means that the velocity of the surface is given by $(u \cdot \nu) \nu$.  Since the normal component of the velocity is continuous across the surface there is no ambiguity in writing $u \cdot \nu$.  The tangential components of $u_\pm$ need not be continuous across $\Sigma(t)$, and indeed there may be jumps in these.  This allows for the possibility of slipping: the upper and lower fluids moving in different directions tangent to $\Sigma(t)$.   Since only the normal component of the velocity vanishes at the fixed upper and lower boundaries, $\{x_3 = \ell\}$ and $\{x_3 = -m\}$, the fluids may also slip along the fixed boundaries.

To complete the statement of the problem, we must specify initial conditions.  We give the initial interface $\Sigma_0$, which yields the open sets $\Omega_\pm(0)$ on which we specify the initial data for the density and velocity, $\rho_\pm(0):\Omega_\pm(0) \rightarrow \Rn{+}$ and $u_\pm(0):\Omega_\pm(0) \rightarrow \Rn{3}$, respectively.

It will be convenient in our subsequent analysis to rewrite the second Euler equation using the enthalpy function 
\begin{equation}
 h_\pm(z) = \int_1^z \frac{P_\pm'(r)}{r}dr.
\end{equation}
The properties of $P_\pm$ guarantee that $h_\pm \in C^\infty((0,\infty))$ are both strictly increasing, and hence invertible on their images.  Then the second compressible Euler equation may be rewritten as
\begin{equation}
 \dt u_\pm + u_\pm \cdot \nab u_\pm + \nab(h_\pm(\rho_\pm)) = -g e_3.
\end{equation}

%%%%%%%%%%%%%%%%%%%%%%%%%%%%%%%%%%%%%%%%%%%%%%%%%555
\subsection{Steady-state solution}
%%%%%%%%%%%%%%%%%%%%%%%%%%%%%%%%%%%%%%%%%%%%%%%%%%%%%%

We seek a steady-state solution with $u_\pm=0$ and the interface given by $\{x_3=0\}$ for all $t\ge 0$.  Then $\Omega_+ = \Omega_+(t) = \Rn{2} \times (0,\ell), \Omega_- = \Omega_-(t) = \Rn{2} \times(-m,0)$ for all $t\ge 0$, and the equations reduce to the ODE
\begin{equation}\label{enthalpy_eqn}
 \frac{d(h_\pm(\rho_\pm))}{dx_3} = -g  \text{ in }\Omega_\pm
\end{equation}
subject to the jump condition
\begin{equation}
 P_+(\rho_+) = P_-(\rho_-) \text{ on } \{x_3=0\}.
\end{equation}
Such a solution depends only on $x_3$, so we may consolidate notation by assuming that $\rho_\pm$ are the restrictions to $(0,\ell)$ and $(-m,0)$ of a single function $\rho_0 = \rho_0(x_3)$ that is smooth on $(-m,0)$ and $(0,\ell)$ with a  jump discontinuity across $x_3=0$.

To solve this we recall that $h_\pm \in C^\infty((0,\infty))$ are both strictly increasing and invertible on their images.  The solution to the ODE is then given by
\begin{equation}
 \rho_0(x_3) = \begin{cases}
h_-^{-1}(h_-(\rho^-_0) - g x_3 ), & -m < x_3 < 0 \\
h_+^{-1}(h_+(\rho^+_0) - g x_3 ), &  0< x_3 < \ell,
             \end{cases}
\end{equation}
where $\rho^-_0 >0$ is a free parameter satisfying $P_-(\rho_0^-) \in P_+((0,\infty))$, which allows the jump condition to be satisfied by choosing $\rho_0^+>0$ according to
\begin{equation}\label{pressure_jump}
 \rho^+_0 = P_+^{-1}(P_-(\rho^-_0)).
\end{equation}
For $\rho_0$ to be well-defined,  we will henceforth assume that $\ell, m>0$ are chosen so that
\begin{equation}
(h_-(\rho^-_0) + g m) \in h_-((0,\infty)) \text{ and } (h_+(\rho^+_0) - g \ell) \in h_+((0,\infty)).
\end{equation}
Note that $\rho_0$ is bounded above and below by positive constants on $(-m,\ell)$ and that $\rho_0$ is smooth when restricted to $(-m,0)$ or $(0,\ell).$

Since we are interested in Rayleigh-Taylor instability, we want the upper fluid to be denser than the lower fluid at the interface, i.e. $\rho^+_0 > \rho^-_0$.  This requires us to choose $\rho^-_0$ so that
\begin{equation}
P_+^{-1}(P_-(\rho^-_0)) > \rho^-_0 \Leftrightarrow P_-(\rho^-_0) > P_+(\rho^-_0).
\end{equation}
The latter condition is satisfied for any $\rho^-_0 \in Z$, where $Z$ was defined by \eqref{Z_def}; we assume $\rho^-_0$ takes any such value.  Then
\begin{equation}\label{rho_jump}
 \jump{\rho_0} := \rho_0^+ - \rho_0^- >0.
\end{equation}

For the sake of clarity, we include an example of the solution, $\rho_0$, when the pressure laws correspond to polytropic gas laws, i.e. $P_\pm(\rho) = K_\pm \rho^{\gamma_\pm}$ for $K_\pm> 0, \gamma_\pm \ge 1$.  The solution is then given by
\begin{equation}
 \rho_0(x_3) = \begin{cases}
\left((\rho_0^-)^{\gamma_{-}-1} - \frac{g(\gamma_{-}-1)}{K_{-}\gamma_{-}} x_3   \right)^{1/(\gamma_{-}-1)} & x_3 < 0 \\
\left((\rho_0^+)^{\gamma_{+}-1} - \frac{g(\gamma_{+}-1)}{K_{+}\gamma_{+}} x_3   \right)^{1/(\gamma_{+}-1)} &
0<x_3 \le \frac{K_{+}\gamma_{+}}{g(\gamma_{+}-1)}  (\rho_0^+)^{\gamma_{+}-1}     \\
0 & x_3 \ge \frac{K_{+}\gamma_{+}}{g(\gamma_{+}-1)}  (\rho_0^+)^{\gamma_{+}-1} 
\end{cases}
\end{equation}
with modification to solutions $\rho_0(x_3) = \rho_0^{\pm} \exp(-gx_3/K_{\pm})$ when either $\gamma_+$ or $\gamma_-$ is $1$.  The jump condition requires that 
\begin{equation}
 \rho_0^+ = \left(\frac{K_-}{K_+}\right)^{1/\gamma_+} (\rho_0^-)^{\gamma_-/\gamma_+}.
\end{equation}
For a polytropic gas law, the condition that $\rho_0^+>\rho_0^-$ is equivalent to
\begin{equation}
 \left(\frac{K_-}{K_+}\right)^{1/\gamma_+} (\rho_0^-)^{\gamma_-/\gamma_+} > \rho_0^- \Leftrightarrow (\rho_0^-)^{\gamma_- - \gamma_+} > \frac{K_+}{K_-}.
\end{equation}
If $\gamma_+ = \gamma_-$ this requires $K_- > K_+$ and any choice of $\rho_0^->0$.  If $\gamma_+ \neq \gamma_-$ then $K_-, K_+>0$ can be arbitrary, but we must require that $\rho_0^->0$ satisfies
\begin{equation}
\begin{cases}
 \rho_0^- > \left( \frac{K_+}{K_-}\right)^{1/(\gamma_- - \gamma_+)} & \text{if } \gamma_- > \gamma_+\\
 \rho_0^- < \left( \frac{K_-}{K_+}\right)^{1/(\gamma_+ - \gamma_-)} & \text{if } \gamma_+ > \gamma_-.
\end{cases}
\end{equation}
In either case, to avoid the vanishing of $\rho_0$, $\ell$ is chosen so that
\begin{equation}
0< \ell < \frac{K_{+}\gamma_{+}}{g(\gamma_{+}-1)}  (\rho_0^+)^{\gamma_{+}-1},
\end{equation}
but the parameter $m>0$ may be chosen arbitrarily.

%%%%%%%%%%%%%%%%%%%%%%%%%%%%%%%%%%%%%%%%%%%%%%%%%%%%%%
\subsection{Horizontal Fourier transform and piecewise Sobolev spaces}
%%%%%%%%%%%%%%%%%%%%%%%%%%%%%%%%%%%%%%%%%%%%%%%%%%%%%%

Before stating the main results, we define some terms that will be used throughout the paper.  For a function $f\in L^2(\Omega)$, we define the horizontal Fourier transform via 
\begin{equation}\label{hft}
 \hat{f}(\xi_1,\xi_2,x_3) = \int_{\Rn{2}} f(x_1,x_2,x_3) e^{-i(x_1 \xi_1 + x_2 \xi_2)}dx_1 dx_2.
\end{equation}
By the Fubini and Parseval theorems, we have that 
\begin{equation}\label{parseval}
\int_\Omega \abs{f(x)}^2 dx = \frac{1}{4\pi^2} \int_\Omega \abs{\hat{f}(\xi,x_3)}^2 d\xi dx_3.
\end{equation}

We now define a function space suitable for our analysis of two disjoint fluids.  For a function $f$ defined on $\Omega$ we write $f_+$ for the restriction to $\Omega_+=\Rn{2} \times (0,\ell)$ and $f_-$ for the restriction to $\Omega_-=\Rn{2} \times (-m,0)$.  For $s \in \Rn{}$, define the piecewise Sobolev space of order $s$ by 
\begin{equation}\label{sob_def}
H^s(\Omega) = \{f \;\vert\; f_+ \in H^s(\Omega_+), f_- \in H^s(\Omega_-)  \}
\end{equation} 
endowed with the norm $\norm{f}_{H^s}^2 = \norm{f}_{H^s(\Omega_+)}^2 + \norm{f}_{H^s(\Omega_-)}^2$.  For $k\in \mathbb{N}$ we can take the norms to be given as
\begin{multline}
 \norm{f}_{H^k(\Omega_\pm)}^2 := \sum_{j=0}^k  \int_{\Rn{2}\times I_\pm} (1+\abs{\xi}^2)^{k-j} \abs{\partial_{x_3}^j \hat{f_\pm}(\xi,x_3) }^2 d\xi dx_3 \\
=  \sum_{j=0}^k  \int_{\Rn{2}} (1+\abs{\xi}^2)^{k-j} \norm{\partial_{x_3}^j \hat{f}_\pm(\xi,\cdot) }^2_{L^2(I_\pm)} d\xi 
\end{multline}
for $I_- = (-m,0)$ and $I_+ = (0,\ell)$.  The main difference between the piecewise Sobolev space $H^s(\Omega)$ and the usual Sobolev space is that we do not require functions in the piecewise Sobolev space to have weak derivatives across the set $\{x_3=0\}$.

%%%%%%%%%%%%%%%%%%%%%%%%%%%%%%%%%%%%%%%%%%%%%%%%%%%%%%
\subsection{Summary of main results and plan of paper}
%%%%%%%%%%%%%%%%%%%%%%%%%%%%%%%%%%%%%%%%%%%%%%%%%%%%%%

The motion of the free surface $\Sigma(t)$ and the domains $\Omega_\pm(t)$ present several mathematical difficulties, so we begin our analysis by switching to a Lagrangian coordinate system in which the domains of the upper and lower fluids stay fixed in time as $\Omega_+ = \Rn{2} \times (0,\ell)$ and $\Omega_-= \Rn{2} \times (-m,0)$, respectively.  Since the steady-state solution has vanishing fluid velocity, the steady-state constructed above in Eulerian coordinates is also a steady-state in Lagrangian coordinates.   

The first part of the paper is devoted to a study of the equations obtained by linearizing the compressible Euler equations, written in Lagrangian coordinates, around the steady-state solution.  The resulting linear equations have coefficient functions that depend only on the vertical variable, $x_3\in(-m,\ell)$.  This allows us to seek ``normal mode'' solutions (cf. \cite{chandra}) by taking the horizontal Fourier transform of the equations and assuming the solution grows exponentially in time by the factor $e^{\lambda(\xi)t}$, where $\xi \in \Rn{2}$ is the horizontal spatial frequency and $\lambda(\xi)>0$.  A similar strategy was employed in the non-barotropic, horizontally periodic case in \cite{vander}.  This reduces the equations to a system of ODEs for each $\xi$, which constitute a variant of a classical boundary-value eigenvalue problem with eigenvalue $\lambda(\xi)$ since one of the unknown functions in the system only appears with first order derivatives in the equations.  

In spite of its non-standard structure, the eigenvalue problem is amenable to solution by constrained minimization, which we then employ.  The use of variational methods to produce the normal mode solution for each $\xi$ is essential to our analysis since it gives rise to detailed estimates of the behavior of $\lambda(\xi)$ as $\xi$ varies in $\Rn{2}$.  We show  in Lemma \ref{eigen_lower}  that there is a constant $C$ so that
\begin{equation}
 0 < C \le  \frac{\lambda(\xi)}{\sqrt{\abs{\xi}}} \le \sqrt{g}   \text{ as } \abs{\xi}\rightarrow \infty.
\end{equation}
Since $\lambda(\xi) \rightarrow \infty$, normal modes with a higher spatial frequency grow faster in time, which provides a mechanism for Rayleigh-Taylor instability.  Indeed, we can form a Fourier synthesis of the normal mode solutions constructed for each spatial frequency $\xi$ to construct solutions to the linearized compressible Euler equations that  grow arbitrarily quickly in time, when measured in $H^k(\Omega)$ for any $k \ge 0$.  This is the content of Theorem \ref{growing_mode_soln}.  The variational methods used are quite general and robust; indeed, in \cite{tice} we extend the methods of this paper to construct growing solutions to the linearized Navier-Stokes equations with surface tension and density-dependent viscosity coefficients.

As a preliminary for our analysis of the well-posedness of the non-linear problem, we then consider the well-posedness of the linearized problem.  Inspired by a result in \cite{hw_guo}, we show a connection between the growth rate of arbitrary solutions to the linearized equations and the eigenvalues $\lambda(\xi)$, which then gives rise to a uniqueness result, Theorem \ref{linear_uniqueness}.  In spite of the uniqueness, the linear problem is ill-posed in the sense of Hadamard in $H^k(\Omega)$ for any $k$ since solutions do not depend continuously on the initial data.  This is shown in Theorem \ref{linear_ill_posed} by employing  Theorem \ref{growing_mode_soln} to build a sequence of solutions with initial data tending to $0$ in $H^k(\Omega)$, but which grow to be arbitrarily large in $H^k(\Omega)$ arbitrarily quickly.  Again, the construction depends heavily on the detailed knowledge of the normal mode solutions provided by the variational methods.

With linear ill-posedness established, we then prove ill-posedness of the fully non-linear problem.  There is no general theory that guarantees the ill-posedness of a non-linear problem given the ill-posedness of the resulting linearized problem.  As such, it is novel and remarkable that in the present case, linear ill-posedness does indeed give rise to  ill-posedness for the non-linear problem.  This is in some sense a compressible analogue to the Rayleigh-Taylor ill-posedness results in the incompressible regime, proved in \cite{ebin}. 

To see ill-posedness, we rewrite the equations in Lagrangian coordinates with the unknown functions given as perturbations of the steady-state solution (see \eqref{perturb} for the exact formulation).  For any $k\ge 3$, we then define a notion of non-linear well-posedness, which we call property $EE(k)$ (see Definition \ref{EE_def} for the precise statement).   Property $EE(k)$ requires local-in-time (on an interval $(0,t_0)$) existence of solutions for initial data with small $H^k(\Omega)$ norm, along with $L^\infty((0,t_0);H^3(\Omega))$ estimates of the solutions (written here generically as $X(t)$) of the form
\begin{equation}
 \sup_{0\le t \le t_0}\norm{X(t)}_{H^3(\Omega)} \le F(\norm{X(0)}_{H^k(\Omega)})
\end{equation}
for some  function $F$, satisfying the Lipschitz condition $F(z) \le C z$ for some $C>0$ for all $z$ in a neighborhood of $0$.   Condition $EE(k)$ is quite general and a reasonable choice for any well-posedness theory.

We then show in Theorem \ref{no_wpk} that it is impossible for property $EE(k)$ to hold for any $k\ge 3$, which implies  ill-posedness for the non-linear compressible Euler equations.  The important feature of $EE(k)$ is that $k\ge 3$ is arbitrary.  If the initial data are extremely smooth ($k$ very large), the failure of property $EE(k)$ means it is impossible to control even the $H^3(\Omega)$ norm for small time.  The proof of Theorem \ref{no_wpk} is a reductio ad absurdum.  Indeed, we show that if $EE(k)$ holds, then it is possible to obtain certain estimates for the corresponding linearized equations that violate our linear ill-posedness result, Theorem \ref{linear_ill_posed}. 

All of our analysis is performed on the semi-infinite slab $\Omega= \Rn{2} \times (-m,\ell)$, and one may wonder if the Rayleigh-Taylor instability is somehow related to the infinite extent of the horizontal component $\Rn{2}$.  This is \emph{not} the case; all of our results carry over essentially word for word if $\Omega$ is replaced by $\tilde{\Omega} = \mathbb{T}^2 \times (-m,\ell)$, where $\mathbb{T}^2 = \mathbb{S}^1 \times \mathbb{S}^1$ is the $2-$torus.  In this case, all of the functions are required to be periodic in the horizontal directions, and the horizontal Fourier transform  \eqref{hft} with continuous spatial frequencies $\xi \in \Rn{2}$  must be replaced with the horizontal Fourier transform of $\mathbb{T}^2$, for which the spatial frequencies are constrained to $\xi \in \mathbb{Z}^2$.  For the sake of brevity we will not rewrite the results for $\tilde{\Omega}$.  There is also nothing essential in our analysis about the domain being contained in $\Rn{3}$.  The same results hold if $\Omega$ or $\tilde{\Omega}$ is replaced by $\Omega = \Rn{1} \times (-m,\ell)$ or $\tilde{\Omega} = \mathbb{T}^1 \times (-m,\ell)$.

The paper is organized as follows.  In Section 2 we perform the switch to Lagrangian coordinates and record the linearized equations.  In Section 3 we construct the growing solutions to the linearized equations.  In Section 4 we analyze the linear problem, proving uniqueness and discontinuous dependence on the initial data.  In Section 5 we prove the ill-posedness result for the non-linear problem.

%%%%%%%%%%%%%%%%%%%%%%%%%%%%%%%%%%%%%%%%%%%%%%%%%%%%%%
\section{Formulation in Lagrangian coordinates}
%%%%%%%%%%%%%%%%%%%%%%%%%%%%%%%%%%%%%%%%%%%%%%%%%%%%%%

%%%%%%%%%%%%%%%%%%%%%%%%%%%%%%%%%%%%%%%%%%%%%%%%%%%%%%
\subsection{Lagrangian coordinates}
%%%%%%%%%%%%%%%%%%%%%%%%%%%%%%%%%%%%%%%%%%%%%%%%%%%%%%

The movement of the free boundary and the subsequent change of the domains $\Omega_\pm(t)$ in Eulerian coordinates create numerous mathematical difficulties.  We circumvent these difficulties by switching to Lagrangian coordinates so that the free interface and the domains stay fixed in time.  To this end we define the fixed Lagrangian domains $\Omega_- = \Rn{2} \times (-m,0)$ and $\Omega_+ = \Rn{2} \times (0,\ell)$ and assume that there exist invertible mappings 
\begin{equation}
 \eta^0_\pm:\Omega_\pm \rightarrow \Omega_\pm(0)
\end{equation}
so that $\Sigma_0 = \eta^0_+(\{x_3=0\})$, $\eta^0_+(\{x_3=\ell\}) = \{x_3=\ell\}$, and $\eta^0_-(\{x_3=-m \}) = \{x_3=-m\}$.  The first condition means that $\Sigma_0$ is parameterized by the mapping $\eta^0_+$ restricted to $\Rn{2}\times\{0\}$, and the latter two conditions mean that $\eta_\pm^0$ map the fixed upper and lower boundaries into themselves.  Define the flow maps $\eta_\pm$ as the solution to
\begin{equation}
 \begin{cases}
  \dt \eta_\pm(x,t) = u_\pm(\eta_\pm(x,t),t) \\
  \eta(x,0) = \eta_\pm^0(x).
 \end{cases}
\end{equation}
We think of the Eulerian coordinates as $(y,t)$ with $y=\eta(x,t)$, whereas we think of Lagrangian coordinates as the fixed $(x,t)\in \Omega \times \Rn{+}$; this implies that $\Omega_\pm(t) = \eta_\pm(\Omega_\pm,t)$ and that $\Sigma(t) = \eta_+(\{x_3=0\},t)$, i.e. that the Eulerian domains of upper and lower fluids are the image of $\Omega_\pm$ under the mappings $\eta_\pm$ and that the free interface is parameterized by $\eta_+(\cdot,t)$ restricted to $\Rn{2} \times \{0\}$.  In order to switch back and forth from Lagrangian to Eulerian coordinates we assume that $\eta_\pm(\cdot,t)$ is invertible.  Since the upper and lower fluids may slip across one another, we must introduce the slip map $S_{\pm}:\Rn{2} \times \Rn{+} \rightarrow \Rn{2} \times \{0\} \subset \Rn{2} \times (-m,\ell)$ defined by
\begin{equation}\label{slip_map_def}
 S_-(x_1,x_2,t) = \eta_-^{-1}(\eta_+(x_1,x_2,0,t),t)
\end{equation}
and $S_+(\cdot,t) = S_-^{-1}(\cdot,t)$.  The slip map $S_-$ gives the particle in the lower fluid  that is in contact with the particle of the upper fluid at $x = (x_1,x_2,0)$ on the contact surface at time $t$.

We define the Lagrangian unknowns
\begin{equation}
 \begin{cases}
  v_\pm(x,t) = u_\pm(\eta_\pm(x,t),t) \\
  q_\pm(x,t) = \rho_\pm(\eta_\pm(x,t),t),
 \end{cases}
\end{equation}
both of which are defined  for $(x,t) \in \Omega_\pm \times \Rn{+}$.  Since the domains $\Omega_\pm$ are now fixed, we henceforth consolidate notation by writing $\eta, v, q$ to refer to $\eta_\pm, v_\pm, q_\pm$ except when necessary to distinguish the two; when we write an equation for $\eta, v, q$ we assume that the equation holds with the subscripts added on the domains $\Omega_\pm$.  Define the matrix $A$ via $A^T= (D \eta)^{-1}$, where $D$ is the derivative  in $x$ coordinates and superscript $T$ denotes matrix transposition.  Then in Lagrangian coordinates, the equations for $v,q,\eta$ are, writing $\partial_j = \partial / \partial x_j$, $\nab = (\partial_1,\partial_2,\partial_3)$, and tr$(\cdot)$ for the matrix trace, 
\begin{equation}\label{lagrangian_equations}
 \begin{cases}
  \dt \eta = v \\
  \dt q + q \text{tr}(A Dv) =0 \\
  \dt v + A \nab(h(q)) = -g A \nab \eta_3.
 \end{cases}
\end{equation}

Since the boundary jump conditions in Eulerian coordinates are phrased in terms of jumps across the surface, the slip map must be employed in Lagrangian coordinates.  The  jump conditions in Lagrangian coordinates are
\begin{equation}\label{lagrangian_jumps}
 \begin{cases}
  (v_+(x_1,x_2,0,t) - v_-(S_-(x_1,x_2,t),t) )\cdot n(x_1,x_2,0,t)  =0 \\
  P_+(q_+(x_1,x_2,0,t)) = P_-(q_-(S_-(x_1,x_2,t),t))
 \end{cases}
\end{equation}
where we have written $n= \nu \circ \eta $, i.e. 
\begin{equation}
n:= \frac{\partial_1 \eta_+ \times \partial_2 \eta_+}{\abs{\partial_1 \eta_+ \times \partial_2 \eta_+}} 
\end{equation}
for the normal to the surface $\Sigma(t) = \eta_+(\{x_3=0\},t)$.  Note that we could just as well have phrased the jump conditions in terms of the slip map $S_+$ and defined the surface and its normal vector in terms of $\eta_-$.  Finally, we require 
\begin{equation}
 v_-(x_1,x_2,-m,t) \cdot e_3 = v_+(x_1,x_2,\ell,t) \cdot e_3 =0.
\end{equation}
Note that since $\dt \eta = v$, 
\begin{equation}
e_3 \cdot \eta_+(x_1,x_2,\ell,t) =e_3 \cdot \eta_+^0(x_1,x_2,\ell) + \int_0^t e_3 \cdot v_+(x_1,x_2,\ell,s)ds = \ell,
\end{equation}
which implies that $\eta_+(x_1,x_2,\ell,t) \in \{x_3 = \ell\}$ for all $t\ge 0$, i.e. that the part of the upper fluid in contact with the fixed boundary $\{x_3=\ell\}$ never flows down from the boundary.  It may, however, slip along the fixed boundary since we do not require $v_+(x_1,x_2,\ell,t)\cdot e_i = 0$ for $i=1,2$.  A similar result holds for $\eta_-$ at the lower fixed boundary $\{x_3=-m\}$. 

In the subsequent analysis it will be convenient to employ the notation
\begin{equation}
 \jump{f} = f_+ \vert_{\{x_3 =0\}} - f_-\vert_{\{x_3 =0\}}
\end{equation}
for the jump of a quantity $f$ across the set $\{x_3=0\}$.

%%%%%%%%%%%%%%%%%%%%%%%%%%%%%%%%%%%%%%%%%%%%%%%%%%%%%%%%%%%55
\subsection{Linearization in Lagrangian coordinates}
%%%%%%%%%%%%%%%%%%%%%%%%%%%%%%%%%%%%%%%%%%%%%%%%%%%%%%%%%%%

In the steady-state case, the flow map is the identity mapping, $\eta = Id$, so that $v=u$ and $q = \rho$.  This means that the steady-state solution, $\rho_0$, constructed above in Eulerian coordinates is also a steady state in Lagrangian coordinates.  Now we want to linearize the equations around the steady-state solution $v=0$, $\eta = Id$, $q=\rho_0$, for which  $S_- = Id_{\{x_3=0\}}$ and $A=I$, the $3\times 3$ identity matrix.  The resulting linearized equations are
\begin{equation}\label{linearized}
\begin{cases}
 \dt \eta = v \\
 \dt q + \rho_0 \diverge{v} =0 \\
 \rho_0 \dt v + \nab(P'(\rho_0) q) =  - g q e_3 - g\rho_0 \nab(e_3 \cdot \eta).
\end{cases}
\end{equation}
Here we have rewritten the third equation by utilizing \eqref{enthalpy_eqn} to write
\begin{equation}
 \nab(h'(\rho_0) q) = \frac{1}{\rho_0}\nab(P'(\rho_0)q) - \frac{P'(\rho_0)\rho_0' q}{\rho_0^2} e_3 = \frac{1}{\rho_0}\nab(P'(\rho_0)q) + \frac{g q e_3}{\rho_0}
\end{equation}
and then multiplying by $\rho_0$, which does not vanish on $(-m,\ell)$.

The jump conditions linearize to
\begin{equation}
 \jump{v \cdot e_3 }   =0 \text{ and }  \jump{P'(\rho_0) q}  =0, 
\end{equation}
while the boundary conditions linearize to
\begin{equation}
 v_-(x_1,x_2,-m,t) \cdot e_3 = v_+(x_1,x_2,\ell,t) \cdot e_3 =0.
\end{equation}

%%%%%%%%%%%%%%%%%%%%%%%%%%%%%%%%%%%%%%%%%%%%%%%%%%%%%%
\section{Construction of a growing solution to \eqref{linearized}}
%%%%%%%%%%%%%%%%%%%%%%%%%%%%%%%%%%%%%%%%%%%%%%%%%%%%%%

%%%%%%%%%%%%%%%%%%%%%%%%%%%%%%%%%%%%%%
\subsection{Growing mode ansatz}
%%%%%%%%%%%%%%%%%%%%%%%%%%%%%%%%%%%%%%%

We wish to construct a solution to the linearized equations \eqref{linearized} that has a growing $H^k$ norm for any $k$.  We will construct such solutions via Fourier synthesis by first constructing a growing mode for a fixed spatial frequency.

To begin, we assume a growing mode ansatz, i.e. let us assume that
\begin{equation}
 v(x,t) = w(x) e^{\lambda t}, q(x,t)= \tilde{q}(x) e^{\lambda t}, \eta(x,t) = \tilde{\eta}(x) e^{\lambda t}.
\end{equation}
Here we assume that $\lambda\in \mathbb{C}\backslash\{0\}$ is the same above and below the interface.  A solution with $\Re(\lambda)>0$ corresponds to a growing mode.  Plugging the ansatz into \eqref{linearized}, we get the equations
\begin{equation}\label{growing_mode}
 \begin{cases}
  \lambda \tilde{\eta} = w \\
  \lambda \tilde{q} + \rho_0 \diverge{w} = 0 \\
 \lambda \rho_0  w + \nab(P'(\rho_0) \tilde{q}) =  - g \tilde{q} e_3 - g\rho_0 \nab(e_3 \cdot \tilde{\eta}).
 \end{cases}
\end{equation}
Eliminating the unknowns $\tilde{\eta}$ and $\tilde{q}$ by using the first and second equations, we arrive at the time-invariant system
\begin{equation}\label{linear_timeless}
 \lambda^2 \rho_0  w  - \nab\left(P'(\rho_0)\rho_0 \diverge{w} \right)    \\
=  g\rho_0 \diverge{w}  e_3  -g\rho_0 \nab w_3.
\end{equation}
We may also eliminate $\tilde{q}$ in the jump conditions to get
\begin{equation}
\jump{w_3} =0 \text{ and } \jump{ P'(\rho_0) \rho_0 \diverge{w}   }  =   0.
\end{equation}
Here we have used the fact that $\lambda \neq 0$ to remove it from the second jump condition.  The boundary conditions are
\begin{equation}
 w_3(x_1,x_2,-m) = w_3(x_1,x_2,\ell)  =0.
\end{equation}

%%%%%%%%%%%%%%%%%%%%%%%%%%%%%%%%%%%%%%%%%%%%%%%%%%
\subsection{Horizontal Fourier transformation}
%%%%%%%%%%%%%%%%%%%%%%%%%%%%%%%%%%%%%%%%%%%%%%%%%%
Since the coefficients of the linear problem \eqref{linear_timeless} only depend on the $x_3$ variable, we are free to take the horizontal Fourier transform \eqref{hft}, which we denote with either $\hat{\cdot}$ or $\mathcal{F}$, to reduce to a system of ODEs in $x_3$ for each fixed spatial frequency.  

We take the horizontal Fourier transform of $w_1,w_2,w_3$ in \eqref{linear_timeless} and fix a spatial frequency $\xi =(\xi_1,\xi_2)\in\Rn{2}$.  Define the new unknowns $\varphi(x_3)=i \hat{w}_1(\xi_1,\xi_2,x_3)$, $\theta(x_3)=i \hat{w}_2(\xi_1,\xi_2,x_3)$, and $\psi(x_3)=\hat{w}_3(\xi_1,\xi_2,x_3)$ so that 
\begin{equation}
 \mathcal{F} (\diverge{w}) = \xi_1 \varphi + \xi_2 \theta + \psi',
\end{equation}
where $' = d/dx_3$.  Then for $\varphi,\theta,\psi$ and $\lambda = \lambda(\xi)$ we arrive at the following system of ODEs.
\begin{equation}\label{w_1_equation}
\left( \lambda^2 \rho_0  +  \xi_1^2  P'(\rho_0)\rho_0    \right) \varphi  = - \xi_1 \left(  P'(\rho_0)\rho_0  \psi' - g\rho_0 \psi \right) 
 - \xi_1  \xi_2  P'(\rho_0)\rho_0 \theta 
\end{equation}
\begin{equation}\label{w_2_equation}
\left( \lambda^2 \rho_0   + \xi_2^2  P'(\rho_0)\rho_0    \right) \theta  = - \xi_2 \left(  P'(\rho_0)\rho_0   \psi' - g\rho_0 \psi \right) 
 -\xi_1   \xi_2   P'(\rho_0)\rho_0  \varphi 
\end{equation}
\begin{equation}\label{w_3_equation}
 -\left(P'(\rho_0)\rho_0  \psi'\right)' 
+  \lambda^2 \rho_0   \psi 
=  \xi_1 \left(\left(  P'(\rho_0)\rho_0  \varphi \right)' + g\rho_0 \varphi \right)
+  \xi_2 \left(\left(  P'(\rho_0)\rho_0  \theta \right)' + g\rho_0 \theta \right)
\end{equation}

We can reduce the complexity of the problem by removing the component $\theta$.  To do this, note that if $\varphi,\theta,\psi$ solve the above equations for $\xi_1,\xi_2$ and $\lambda$, then for any rotation operator $R\in SO(2)$,  $(\tilde{\varphi},\tilde{\theta}) := R (\varphi,\theta)$ solve the same equations for $(\tilde{\xi}_1,\tilde{\xi}_2) := R(\xi_1,\xi_2)$ with $\psi, \lambda$ unchanged.  So, by choosing an appropriate rotation, we may assume without loss of generality that $\xi_2=0$ and $\xi_1=\abs{\xi}\ge 0$.  In this setting $\theta$ solves 
\begin{equation}
\lambda^2 \rho_0  \theta =0, 
\end{equation}
and since  $\rho_0>0$ on $(-m,\ell)$ we have that $\theta=0$.  This reduces to the pair of equations
\begin{equation}\label{coupled}
\begin{cases}
 -\lambda^2 \rho_0 \varphi  =  \abs{\xi} ( P'(\rho_0)\rho_0  (\psi' + \abs{\xi} \varphi)) -  g\abs{\xi}\rho_0 \psi  \\
 -\lambda^2 \rho_0  \psi =  - (P'(\rho_0)\rho_0 (\psi' + \abs{\xi} \varphi )) ' - g \abs{\xi} \rho_0 \varphi 
\end{cases}
\end{equation}
with the jump conditions
\begin{equation}
\jump{ \psi }     = 0 \text{ and } \jump{  P'(\rho_0)\rho_0 (\abs{\xi} \varphi + \psi')}  = 0
\end{equation}
and the boundary conditions $\psi(-m) = \psi(\ell) =0$.

Since $\lambda \in \mathbb{C}$,  $\varphi,\psi$ can be complex valued, but we may reduce to the case of real valued functions as follows.  Multiply the first equation in \eqref{coupled} by $\bar{\varphi}$ and the second by $\bar{\psi}$, add the resulting equations, and integrate over $(-m,\ell)$.  An integration by parts and an application of the boundary and jump conditions shows that
\begin{equation}
 -\frac{\lambda^2}{2} \int_{-m}^\ell \rho_0 (\abs{\varphi}^2 + \abs{\psi}^2) = \hal \int_{-m}^\ell P'(\rho_0)\rho_0 \abs{\psi' + \abs{\xi} \varphi}^2 - 2 g \abs{\xi} \rho_0 \Re(\varphi \bar{\psi}).
\end{equation}
From this we see that $-\lambda^2 \in \Rn{}$, which implies that $\Re(\varphi), \Re(\psi)$ are also solutions, so we may restrict ourselves to finding real valued solutions.  Moreover, we know that either $\lambda \in \Rn{}$ if $-\lambda^2 \le 0$ or $\lambda \in i\Rn{}$ if $-\lambda^2 >0$.  Since we want $\lambda >0$ for a growing mode solution, we are interested only in finding solutions with $-\lambda^2 := \mu < 0$.  For a given $\abs{\xi}$, the smallest value of $-\lambda^2 =\mu$ (and hence the largest growing mode if $\mu<0$) can then be found by a variational principle:
\begin{multline}
\mu = \mu(\abs{\xi}) \\
=  \inf\left\{ \left. \hal \int_{-m}^\ell P'(\rho_0)\rho_0 (\psi' + \abs{\xi} \varphi)^2 - 2 g \abs{\xi} \rho_0 \varphi \psi \;\right\vert\; \hal \int_{-m}^\ell \rho_0 (\varphi^2 + \psi^2) =1 \right\}.
\end{multline}
We will use this in the next section to find solutions.

%Note that when $\abs{\xi}=0$ then a solution is given by $\lambda =0$, $\psi=0$ and $\varphi$ arbitrary.  When $\abs{\xi}>0$ non-trivial solutions may be constructed.

%%%%%%%%%%%%%%%%%%%%%%%%%%%%%%%%%%%%%%%%%%%%%%%%%%%%%
\subsection{Constrained minimization formulation}
%%%%%%%%%%%%%%%%%%%%%%%%%%%%%%%%%%%%%%%%%%%%%%%%%%%%

In this section we will produce a solution to \eqref{coupled} with $\abs{\xi}>0$  by utilizing variational methods.  In order to understand $\lambda$ in a variational framework we consider the two energies
\begin{equation}
E(\varphi,\psi) = \hal \int_{-m}^\ell  P'(\rho_0)\rho_0 (\psi' + \abs{\xi} \varphi)^2    
- 2  g\rho_0  \abs{\xi}  \psi \varphi  
\end{equation}
and 
\begin{equation}
 J(\varphi,\psi) = \hal \int_{-m}^\ell \rho_0  (\varphi^2 + \psi^2),
\end{equation}
which are both well-defined on the space $L^2((-m,\ell)) \times H_0^1((-m,\ell))$.  In this section we will write $H_0^1((-m,\ell)$ for the usual Sobolev space of functions on $(-m,\ell)$ that vanish at the endpoints, i.e. for our variational methods we will \emph{not} use the piecewise Sobolev space defined in the introduction.  Consider the set
\begin{equation}
 \mathcal{A} = \{ (\varphi,\psi)\in  L^2((-m,\ell)) \times H_0^1((-m,\ell)) \;\vert\;  J(\varphi,\psi)=1  \}.
\end{equation}

We will generate solutions by minimizing $E$ over the collection $\mathcal{A}$.  In order for these minimizers to give rise to a solution to \eqref{coupled}, it must hold that 
\begin{equation}
 -\lambda^2 =  \inf_{(\varphi,\psi)\in \mathcal{A}} E(\varphi,\psi) < 0.
\end{equation}
Throughout this section we assume that $\abs{\xi}>0$, and we will construct a non-trivial solution with $\lambda = \lambda(\abs{\xi}) >0$.  To this end, we begin by showing that the infimum is negative for each $\abs{\xi}>0$.

\begin{lem}\label{neg_inf}
It holds that  $\inf \{ E(\varphi,\psi) \;\vert\; (\varphi,\psi) \in \mathcal{A} \} < 0.$
\end{lem}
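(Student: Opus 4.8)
The plan is to exhibit an explicit test pair $(\varphi,\psi)\in\mathcal{A}$ for which $E(\varphi,\psi)<0$. The key observation is that the cross term $-2g\abs{\xi}\int_{-m}^\ell \rho_0\,\varphi\psi$ can be made negative and large relative to the quadratic form $\hal\int P'(\rho_0)\rho_0(\psi'+\abs{\xi}\varphi)^2$ by exploiting the degeneracy of the latter: the quadratic term only sees the combination $\psi'+\abs{\xi}\varphi$, so once $\psi$ is fixed I am free to choose $\varphi$ to partially (or, in the limit, nearly) cancel $\psi'/\abs{\xi}$, thereby killing the positive term while keeping $\int\rho_0\varphi\psi$ of a favorable sign.

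Concretely, I would first pick any fixed $\psi_0\in H_0^1((-m,\ell))$, $\psi_0\not\equiv 0$ — for instance a smooth bump supported near $x_3=0$ chosen so that $\psi_0\ge 0$. Then set $\varphi_0 = -\psi_0'/\abs{\xi}$, which makes $\psi_0'+\abs{\xi}\varphi_0=0$ identically, so that $E(\varphi_0,\psi_0) = -2g\abs{\xi}\int_{-m}^\ell \rho_0\,\varphi_0\psi_0 = \frac{2g}{\abs{\xi}}\cdot\abs{\xi}\int_{-m}^\ell \rho_0\,\psi_0'\psi_0 = g\int_{-m}^\ell \rho_0\,(\psi_0^2)'$. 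Wait — this is the point where one must be a little careful: integrating by parts, $\int_{-m}^\ell \rho_0 (\psi_0^2)' = [\rho_0\psi_0^2]_{-m}^\ell - \int_{-m}^\ell \rho_0'\psi_0^2$, and the boundary terms vanish since $\psi_0\in H_0^1$, but $\rho_0$ has a jump at $x_3=0$, contributing $-\jump{\rho_0}\psi_0(0)^2$; by \eqref{rho_jump} this jump contribution is $-\jump{\rho_0}\psi_0(0)^2<0$ provided $\psi_0(0)\neq 0$, and within each subinterval $-\int \rho_0'\psi_0^2 = \int (g/P'(\rho_0))\cdot(\text{something})$ — actually $\rho_0' = -g\rho_0/P'(\rho_0)<0$ from \eqref{enthalpy_eqn}, so $-\int_{-m}^0 -\int_0^\ell \rho_0'\psi_0^2 = \int_{-m}^\ell \frac{g\rho_0}{P'(\rho_0)}\psi_0^2 \ge 0$. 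So this naive choice does not obviously give a negative value; the sign competition between the interfacial jump and the bulk terms needs control.

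The cleaner route, which I would actually carry out, is to retain a small amount of the gradient term: choose $\psi_0$ concentrating sharply near $x_3=0$ at scale $\delta\to 0$ — say $\psi_0 = \psi_\delta$ with $\psi_\delta(0)=1$ and support in $(-\delta,\delta)$ — and again take $\varphi_\delta$ so that $\psi_\delta'+\abs{\xi}\varphi_\delta$ is small in an appropriate weighted $L^2$ sense while $\int\rho_0\varphi_\delta\psi_\delta$ stays bounded below. One can see heuristically that the interfacial contribution $-\jump{\rho_0}\psi_\delta(0)^2 = -\jump{\rho_0}<0$ is an order-one negative quantity, the bulk term $\int (g\rho_0/P'(\rho_0))\psi_\delta^2 = O(\delta)\to 0$, and the quadratic gradient term can be arranged to vanish. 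Then after rescaling $(\varphi_\delta,\psi_\delta)$ to satisfy the constraint $J=1$ (which only rescales $E$ by a positive factor), we conclude $E<0$ for $\delta$ small. The main obstacle is precisely bookkeeping this sign competition — making rigorous that the negative interfacial term from $\jump{\rho_0}>0$ dominates the positive bulk contributions and the residual gradient term — but since we have the freedom to localize $\psi$ near the interface and to choose $\varphi$ essentially arbitrarily (the gradient form being degenerate in the right direction), this is a routine limiting argument rather than a genuine difficulty. The role of hypothesis \eqref{rho_jump}, $\jump{\rho_0}>0$, is exactly what makes the instability (and hence the negativity of the infimum) possible.
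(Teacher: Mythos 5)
Your proposal is correct and takes essentially the same route as the paper: reduce by homogeneity to exhibiting a pair with $E<0$, set $\varphi=-\psi'/|\xi|$ to annihilate the gradient term, integrate by parts to expose the negative interfacial contribution $-\tfrac{g}{2}\jump{\rho_0}\psi^2(0)$ against the non-negative bulk term $\tfrac{g^2}{2}\int\rho_0/P'(\rho_0)\,\psi^2$, and then concentrate $\psi$ near $x_3=0$ so the bulk term is $o(1)$ while $\psi(0)=1$. The paper realizes the concentration with the explicit family $\psi_\alpha\sim(1\mp x_3/\ell)^{\alpha/2}$ and sends $\alpha\to\infty$ (giving the bulk term order $1/(1+\alpha)$), whereas you shrink the support to scale $\delta\to 0$; the two are interchangeable and your sign bookkeeping, once you commit to the exact cancellation $\varphi=-\psi'/|\xi|$ rather than the approximate version you hedge toward, closes the argument just as the paper's does.
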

\begin{proof}
 Since both $E$ and $J$ are homogeneous of degree $2$ it suffices to show that 
\begin{equation}
 \inf_{(\varphi,\psi)\in L^2 \times H_0^1} \frac{E(\varphi,\psi)}{J(\varphi,\psi)} < 0,
\end{equation}
but since $J$ is positive definite, we may reduce to constructing any pair $(\varphi,\psi)\in L^2 \times H_0^1$ such that $E(\varphi,\psi) <0$.  We will further assume that $\varphi = -\psi'/\abs{\xi}$ so that the first term in $E(\varphi,\psi)$ vanishes.  We must then construct $\psi \in H_0^1$ so that 
\begin{equation}
\tilde{E}(\psi) := E(-\psi'/\abs{\xi},\psi) =  \int_{-m}^\ell g \rho_0 \psi \psi' <0. 
\end{equation}

We employ the identity $\psi \psi' = (\psi^2)'/2$ and an integration by parts to write
\begin{multline}\label{n_i_1}
 \tilde{E}(\psi) = \left[\frac{g \rho_0 \psi^2 }{2}  \right]_{0}^{\ell} - \hal \int_{0}^\ell g \rho_0' \psi^2 + \left[\frac{g \rho_0 \psi^2 }{2} \right]_{-m}^{0} - \hal \int_{-m}^0 g \rho_0' \psi^2 \\
=  -\frac{g \psi^2(0)}{2} \jump{ \rho_0} + \frac{g^2}{2} \int_{-m}^\ell \frac{\rho_0}{P'(\rho_0)} \psi^2.
\end{multline}
Notice that by \eqref{rho_jump}, $\jump{\rho_0}  = \rho^+_0 - \rho^-_0 >0$  so that the right hand side is not positive definite.  

For $\alpha \ge 2$ we define the function $\psi_\alpha \in H_0^1((-m,\ell))$ according to 
\begin{equation}
 \psi_\alpha(x_3) = 
\begin{cases}
   \left(\frac{\rho^+_0 P_+'(\rho_0(x_3))}{P_+'(\rho^+_0) \rho_0(x_3)} \right)^{1/2} \left(1-\frac{x_3}{\ell} \right)^{\alpha/2}, & x_3 \in (0,\ell) \\
   \left(\frac{\rho^-_0 P_-'(\rho_0(x_3))}{P_-'(\rho^-_0) \rho_0(x_3)} \right)^{1/2}
   \left(1+\frac{x_3}{m} \right)^{\alpha/2}, &  x_3\in (-m,0]. 
\end{cases}
\end{equation}
According to \eqref{pressure_jump}, $\psi_\alpha$ is continuous across $x_3=0$ and $\psi_\alpha(0)=1$.  A simple calculation then shows that 
\begin{equation}
 \int_{-m}^\ell \frac{\rho_0}{P'(\rho_0)} \psi^2 = \frac{1}{1+\alpha}\left( \frac{m \rho^-_0}{P_-'(\rho^-_0) } +  \frac{\ell \rho^+_0}{P_+'(\rho^+_0)}  \right),
\end{equation}
which implies that
\begin{equation}\label{n_i_2}
 \tilde{E}(\psi_\alpha) = \frac{g}{2} \left( -\jump{\rho_0} +  \frac{g}{\alpha +1}\left( \frac{m \rho^-_0}{P'_-(\rho^-_0)} +  \frac{\ell \rho^+_0}{P'_+(\rho^+_0) }  \right)  \right).
\end{equation}
Then for $\alpha$ sufficiently large we have that $\tilde{E}(\psi_\alpha) < 0$, thereby proving the result.
\end{proof}

The key to the argument presented in Lemma \ref{neg_inf} was constructing a pair $(\varphi,\psi)$ so that
\begin{equation}
 \frac{-g\jump{\rho_0}}{2} \psi^2(0) <0,
\end{equation}
which in particular required that $\psi(0)\neq 0$.  We can show that this property is satisfied by any $(\varphi,\psi) \in \mathcal{A}$ so that $E(\varphi,\psi) <0$.

\begin{lem}\label{non_zero_origin}
 Suppose that $(\varphi,\psi)\in \mathcal{A}$ satisfy $E(\varphi,\psi)<0$.  Then $\psi(0)\neq 0$.
\end{lem}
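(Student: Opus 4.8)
The plan is to prove the contrapositive: assuming $\psi(0)=0$, I will show $E(\varphi,\psi)\ge 0$. The point is that even though the competitor $(\varphi,\psi)$ lies in $\mathcal{A}$, the quantity $u:=\psi'+\abs{\xi}\varphi\in L^2((-m,\ell))$ is effectively unconstrained once $\psi$ is fixed, so I would re-express $E$ in terms of $u$ and $\psi$ alone. Writing $\abs{\xi}\varphi=u-\psi'$ in the definition of $E$ yields
\[
E(\varphi,\psi)=\hal\int_{-m}^\ell P'(\rho_0)\rho_0\,u^2\;-\;g\int_{-m}^\ell \rho_0\psi\,u\;+\;g\int_{-m}^\ell \rho_0\psi\psi'.
\]

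For the last term I would invoke the integration-by-parts identity already recorded in \eqref{n_i_1}, namely $g\int_{-m}^\ell\rho_0\psi\psi'=-\tfrac{g}{2}\jump{\rho_0}\psi^2(0)+\tfrac{g^2}{2}\int_{-m}^\ell\tfrac{\rho_0}{P'(\rho_0)}\psi^2$, which rests on $\rho_0'=-g\rho_0/P'(\rho_0)$ (a consequence of \eqref{enthalpy_eqn}) together with the boundary conditions $\psi(-m)=\psi(\ell)=0$. This is exactly where the hypothesis is used: $\psi(0)=0$ annihilates the boundary term $\tfrac{g}{2}\jump{\rho_0}\psi^2(0)$, and a pointwise completion of the square in $u$ then gives
\[
E(\varphi,\psi)=\hal\int_{-m}^\ell\frac{\rho_0}{P'(\rho_0)}\big(P'(\rho_0)u-g\psi\big)^2\;\ge\;0,
\]
contradicting $E(\varphi,\psi)<0$; hence $\psi(0)\ne 0$.

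Along the way a few routine points need checking: since $H_0^1((-m,\ell))\hookrightarrow C^0$ in one dimension, $\psi(0)$ is well defined, and $\psi(0)=0$ forces the restrictions $\psi|_{(0,\ell)}$ and $\psi|_{(-m,0)}$ into $H_0^1$ of their respective subintervals, which legitimizes integrating by parts separately on $(0,\ell)$ and $(-m,0)$ with no contribution at $x_3=0$; moreover $\rho_0$ is smooth and bounded above and below by positive constants on each subinterval, and $\rho_0/P'(\rho_0)$ is positive (locally bounded by the standing assumption $1/P'_\pm\in L^\infty_{loc}$), so the final integrand is a genuine nonnegative square.

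The ``hard part'' here is conceptual rather than computational: recognizing that the entire mechanism making $E$ negative in Lemma \ref{neg_inf} is carried by the single boundary term $-\tfrac{g}{2}\jump{\rho_0}\psi^2(0)$ in \eqref{n_i_1}, and that once one works with the free variable $u=\psi'+\abs{\xi}\varphi$ in place of $\varphi$, deleting that term leaves an exact square. The remaining steps are bookkeeping.
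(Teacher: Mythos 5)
Your proof is correct and rests on the same identity as the paper's: completing the square in the quantity $u=\psi'+\abs{\xi}\varphi$ combined with the integration-by-parts computation \eqref{n_i_1} isolates the boundary term $-\tfrac{g}{2}\jump{\rho_0}\psi^2(0)$ from a manifestly nonnegative integral (your $\tfrac{\rho_0}{P'(\rho_0)}(P'(\rho_0)u-g\psi)^2$ is literally the paper's $\bigl(\sqrt{P'(\rho_0)\rho_0}\,u-\tfrac{g\sqrt{\rho_0}}{\sqrt{P'(\rho_0)}}\psi\bigr)^2$). The only difference is that the paper derives the full identity \eqref{energy_div_form} without assuming $\psi(0)=0$ and reads off the conclusion directly, whereas you argue by contrapositive; this is a stylistic, not mathematical, distinction.
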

\begin{proof}
A completion of the square allows us to write 
\begin{multline}
 P'(\rho_0)\rho_0 (\psi' + \abs{\xi} \varphi)^2 - 2 g \rho_0 \abs{\xi} \psi \varphi \\
= \left(\sqrt{P'(\rho_0)\rho_0}(\psi'+\abs{\xi} \varphi) - \frac{g\sqrt{\rho_0}}{\sqrt{P'(\rho_0)}}  \psi  \right)^2  + 2 g \rho_0 \psi \psi' - \frac{g^2 \rho_0}{P'(\rho_0)}  \psi^2.
\end{multline}
Integrating by parts as in \eqref{n_i_1}, we know that
\begin{equation}
  \int_{-m}^\ell 2g \rho_0 \psi \psi' -  \frac{g^2 \rho_0}{P'(\rho_0)} \psi^2=  -g  \jump{\rho_0} \psi^2(0).
\end{equation}
Combining these equalities, we can rewrite the energy as 
\begin{equation}\label{energy_div_form}
 E(\varphi,\psi) = -\frac{g}{2}  \jump{\rho_0}\psi^2(0) + \hal \int_{-m}^\ell  \left(\sqrt{P'(\rho_0)\rho_0}(\psi'+\abs{\xi} \varphi) - \frac{g\sqrt{\rho_0}}{\sqrt{P'(\rho_0)}}  \psi  \right)^2.
\end{equation}
From the non-negativity of the integrals, we deduce that if $E(\varphi,\psi)<0$, then $\psi(0) \neq 0$.
\end{proof}

With Lemma  \ref{neg_inf} in hand, we way apply the direct methods to deduce the existence of a minimizer of $E$ in $\mathcal{A}$.

\begin{prop}\label{min_exist}
$E$ achieves its infimum on $\mathcal{A}$.
\end{prop}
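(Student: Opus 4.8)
The plan is to use the direct method of the calculus of variations. First I would let $(\varphi_j,\psi_j) \in \mathcal{A}$ be a minimizing sequence, so that $E(\varphi_j,\psi_j) \to \mu := \inf_{\mathcal{A}} E$, which is finite since by Lemma \ref{neg_inf} it is bounded above by a negative number, and bounded below because, using the completed-square form \eqref{energy_div_form} together with the trace inequality $\psi^2(0) \lesssim \norm{\psi}_{L^2}\norm{\psi'}_{L^2} \lesssim \norm{\psi}_{L^2}^2 + \epsilon\norm{\psi'}_{L^2}^2$, one sees $E$ cannot be arbitrarily negative on $\mathcal{A}$. The first real task is to extract a uniform $H^1$ bound on $\psi_j$ and an $L^2$ bound on $\varphi_j$. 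The constraint $J(\varphi_j,\psi_j)=1$, together with $\rho_0$ bounded below by a positive constant, immediately gives $\norm{\varphi_j}_{L^2}^2 + \norm{\psi_j}_{L^2}^2 \le C$. For the derivative bound, I would rewrite $E$ by completing the square in $\psi' + \abs{\xi}\varphi$ (as in \eqref{energy_div_form}): since $P'(\rho_0)\rho_0$ is bounded above and below by positive constants on $(-m,\ell)$, boundedness of $E(\varphi_j,\psi_j)$ plus the already-established $L^2$ bounds (controlling the lower-order terms, including the $\psi^2(0)$ boundary term via the trace/interpolation inequality, absorbing the small $\norm{\psi_j'}^2$ piece) yields $\norm{\psi_j' + \abs{\xi}\varphi_j}_{L^2} \le C$, and hence $\norm{\psi_j'}_{L^2} \le C$.

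Next I would pass to a subsequence so that $\psi_j \rightharpoonup \psi$ weakly in $H_0^1((-m,\ell))$ and $\varphi_j \rightharpoonup \varphi$ weakly in $L^2((-m,\ell))$. By the compact embedding $H^1((-m,\ell)) \hookrightarrow\hookrightarrow C^0([-m,\ell])$, we get $\psi_j \to \psi$ strongly in $L^2$ and, crucially, $\psi_j(0) \to \psi(0)$. This strong convergence handles the constraint and two of the three terms: $J(\varphi,\psi) \le \liminf J(\varphi_j,\psi_j) = 1$ by weak lower semicontinuity (actually I need $J(\varphi,\psi)=1$ — see below), the cross term $\int \rho_0 \psi_j \varphi_j \to \int \rho_0 \psi \varphi$ since $\rho_0\psi_j \to \rho_0\psi$ strongly in $L^2$ and $\varphi_j \rightharpoonup \varphi$ weakly, and the boundary term $\psi_j^2(0) \to \psi^2(0)$. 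For the leading quadratic term $\hal\int P'(\rho_0)\rho_0(\psi_j'+\abs{\xi}\varphi_j)^2$, I would note $\psi_j' + \abs{\xi}\varphi_j \rightharpoonup \psi' + \abs{\xi}\varphi$ weakly in $L^2$, and since $v \mapsto \int P'(\rho_0)\rho_0 v^2$ is convex and strongly continuous on $L^2$ (the weight being positive and bounded), it is weakly lower semicontinuous; hence $E(\varphi,\psi) \le \liminf E(\varphi_j,\psi_j) = \mu$.

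There remains the standard nuisance that the constraint might degenerate in the limit: a priori only $J(\varphi,\psi) \le 1$. I would rule out $J(\varphi,\psi) < 1$ by a rescaling argument: if $0 < J(\varphi,\psi) < 1$, set $t = J(\varphi,\psi)^{-1/2} > 1$; then $(t\varphi, t\psi) \in \mathcal{A}$ and $E(t\varphi,t\psi) = t^2 E(\varphi,\psi) \le t^2 \mu < \mu$ (using $\mu < 0$ from Lemma \ref{neg_inf}), contradicting the definition of $\mu$. The case $J(\varphi,\psi) = 0$, i.e. $(\varphi,\psi) = (0,0)$, would force $\mu = E(0,0) = 0$ by the above lsc inequality, again contradicting $\mu < 0$. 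Hence $J(\varphi,\psi) = 1$, so $(\varphi,\psi) \in \mathcal{A}$ and $E(\varphi,\psi) \le \mu$, which combined with $E(\varphi,\psi) \ge \mu$ gives $E(\varphi,\psi) = \mu$; the infimum is achieved. I expect the main obstacle to be the derivative estimate: extracting the uniform $H^1$ bound on $\psi_j$ requires carefully handling the indefinite sign in $E$ — the negative boundary term $-\frac{g}{2}\jump{\rho_0}\psi^2(0)$ — which is exactly why the completion of the square in \eqref{energy_div_form} and the interpolated trace inequality are needed to absorb it into the good terms.
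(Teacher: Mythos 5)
Your proof is correct and follows the same direct-method skeleton as the paper: extract a bounded minimizing sequence, pass to weak limits, use lower semicontinuity, and rescale to recover the constraint. The one genuine difference is how you establish coercivity (the lower bound for $E$ and the uniform $H^1$ bound on $\psi_j$). You work from the completed-square form \eqref{energy_div_form}, which isolates the indefinite boundary term $-\tfrac{g}{2}\jump{\rho_0}\psi^2(0)$, and then absorb that term via the interpolated trace inequality $\psi^2(0) \lesssim \epsilon\norm{\psi'}_{L^2}^2 + C_\epsilon\norm{\psi}_{L^2}^2$. The paper instead applies the elementary identity $-2\varphi\psi = (\varphi-\psi)^2 - (\varphi^2+\psi^2)$ together with the constraint $J(\varphi,\psi)=1$ to rewrite
\begin{equation*}
E(\varphi,\psi) = -g\abs{\xi} + \hal\int_{-m}^\ell P'(\rho_0)\rho_0\bigl(\psi'+\abs{\xi}\varphi\bigr)^2 + g\abs{\xi}\rho_0(\varphi-\psi)^2,
\end{equation*}
which is manifestly $\ge -g\abs{\xi}$ and immediately bounds $\norm{\psi_n' + \abs{\xi}\varphi_n}_{L^2}$ along any minimizing sequence, with no trace inequality or $\epsilon$-absorption. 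Both routes work; the paper's is shorter and self-contained, while yours reuses the structure already extracted for Lemma~\ref{non_zero_origin}. Two small remarks. In your semicontinuity discussion you list both ``the cross term $\int\rho_0\psi_j\varphi_j$'' (present only in the original form of $E$) and ``the boundary term $\psi_j^2(0)$'' (present only in the form \eqref{energy_div_form}); these never appear in the same expression, so you should commit to one representation before passing to the limit --- either one works, but mixing them reads as double-counting. Finally, you explicitly rule out $J(\varphi,\psi)=0$, which the paper leaves implicit (it would force $E(\varphi,\psi)=0$, contradicting $E(\varphi,\psi) \le \inf_{\mathcal{A}} E < 0$); spelling this out is a small gain in clarity.
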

\begin{proof}
First note that by employing the identity $-2ab=(a-b)^2-(a^2+b^2)$ and the constraint  $J(\varphi,\psi)=1$ we may rewrite 
\begin{equation}\label{E_lower_bound}
E(\varphi,\psi) = -g \abs{\xi} +  \hal \int_{-m}^\ell   P'(\rho_0)\rho_0 (\psi' + \abs{\xi} \varphi)^2    
+  g\abs{\xi} \rho_0 (\varphi-\psi)^2 \ge -g\abs{\xi}.
\end{equation}
This shows that $E$ is bounded below on $\mathcal{A}$.  Let $(\varphi_n,\psi_n)\in\mathcal{A}$ be a minimizing sequence.  Then $\varphi_n$ is bounded in $L^2((-m,\ell))$ and $\psi_n$ is bounded in $H_0^1((-m,\ell))$, so up to the extraction of a subsequence   $\varphi_n \rightharpoonup \varphi$ weakly in $L^2$, $\psi_n \rightharpoonup \psi$ weakly in $H_0^1$, and $\psi_n \rightarrow \psi$ strongly in $L^2$.  Via weak lower semi-continuity and the strong $L^2$ convergence $\psi_n \rightarrow \psi$, we then have that
\begin{equation}
 E(\varphi,\psi) \le \liminf_{n\rightarrow \infty} E(\varphi_n,\psi_n) = \inf_{\mathcal{A}} E.
\end{equation}
All that remains is to show that $(\varphi,\psi)\in\mathcal{A}$.

Again by lower semicontinuity, we know that $J(\varphi,\psi) \le 1$.  Suppose by way of contradiction that $J(\varphi,\psi) <1$.  By the homogeneity of $J$ we may find $\alpha>1$ so that $J(\alpha \varphi, \alpha \psi) =1$, i.e. we may scale up $(\varphi,\psi)$ so that $(\alpha \varphi, \alpha \psi) \in \mathcal{A}$.  By Lemma \ref{neg_inf} we know that $\inf_{\mathcal{A}} E<0$, and from this we deduce that 
\begin{equation}
 E(\alpha \varphi, \alpha \psi) = \alpha^2 E(\varphi,\psi) \le \alpha^2 \inf_{\mathcal{A}} E < \inf_{\mathcal{A}} E,
\end{equation}
which is a contradiction since $(\alpha \varphi,\alpha \psi) \in \mathcal{A}$.  Hence $J(\varphi,\psi)=1$ so that $(\varphi,\psi)\in\mathcal{A}$.
\end{proof}

\begin{remark}
 By inequality \eqref{E_lower_bound} we know that $-\lambda^2 = E(\varphi,\psi) \ge - g\abs{\xi}$ and hence $\lambda \le \sqrt{g \abs{\xi}}$. 
\end{remark}

The next result provides a lower bound for $\lambda$ for large $\abs{\xi}$, showing that $\lambda(\abs{\xi}) \rightarrow \infty$ as $\abs{\xi} \rightarrow \infty$.

\begin{lem}\label{eigen_lower}
There exist constants $C_0, C_1, C_2>0$ depending on $\rho^\pm_0, P_{\pm}, g, m, \ell$ such that the eigenvalue $\lambda = \lambda(\abs{\xi})$ satisfies
\begin{equation}
 \lambda^2 \ge C_1 \abs{\xi} - C_2 \text{ for } \abs{\xi} \ge C_0.
\end{equation}
\end{lem}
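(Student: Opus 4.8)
The conclusion $\lambda^2 \ge C_1\abs{\xi} - C_2$ is, via the variational characterization $-\lambda^2 = \mu(\abs{\xi}) = \inf_{\mathcal{A}} E$ from the preceding results, equivalent to an upper bound $\mu(\abs{\xi}) \le -C_1\abs{\xi} + C_2$. Since $E$ and $J$ are homogeneous of degree two and $J$ is positive on nonzero pairs, the plan is simply to exhibit, for each $\abs{\xi} \ge C_0$, a pair $(\varphi,\psi) \in L^2((-m,\ell)) \times H_0^1((-m,\ell))$ with $\psi \not\equiv 0$ and
\[
\frac{E(\varphi,\psi)}{J(\varphi,\psi)} \le -C_1\abs{\xi} + C_2 .
\]

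I would reuse the family $\psi_\alpha$ from the proof of Lemma \ref{neg_inf}, paired with $\varphi_\alpha := -\psi_\alpha'/\abs{\xi}$, so that the nonnegative term $\hal\int P'(\rho_0)\rho_0(\psi_\alpha'+\abs{\xi}\varphi_\alpha)^2$ vanishes and $E(\varphi_\alpha,\psi_\alpha) = \tilde E(\psi_\alpha)$. By \eqref{n_i_2}, $\tilde E(\psi_\alpha) = \frac{g}{2}\bigl(-\jump{\rho_0} + \frac{g}{\alpha+1}(\frac{m\rho_0^-}{P'_-(\rho_0^-)} + \frac{\ell\rho_0^+}{P'_+(\rho_0^+)})\bigr)$, so since $\jump{\rho_0} > 0$ by \eqref{rho_jump} there is a threshold $\alpha_0$, depending only on $\rho_0^\pm, P_\pm, g, m, \ell$, with $\tilde E(\psi_\alpha) \le -\frac{g}{4}\jump{\rho_0} < 0$ for every $\alpha \ge \alpha_0$. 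For the denominator one has
\[
J(\varphi_\alpha,\psi_\alpha) = \frac{1}{2\abs{\xi}^2}\int_{-m}^\ell \rho_0 (\psi_\alpha')^2 + \hal\int_{-m}^\ell \rho_0 \psi_\alpha^2 ,
\]
and from the explicit form of $\psi_\alpha$, together with the boundedness above and below of $\rho_0$ and $P'_\pm(\rho_0)$ on each of $[-m,0]$ and $[0,\ell]$, a direct computation (the one behind \eqref{n_i_2}, plus differentiation of the factors $(1-x_3/\ell)^{\alpha/2}$ and $(1+x_3/m)^{\alpha/2}$) gives bounds $\int_{-m}^\ell \rho_0 \psi_\alpha^2 \le C/(\alpha+1)$ and $\int_{-m}^\ell \rho_0 (\psi_\alpha')^2 \le C\alpha$ for all $\alpha \ge 2$, with $C$ depending only on the data.

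The decisive step is to let $\alpha$ grow linearly in $\abs{\xi}$, say $\alpha = \lceil\abs{\xi}\rceil$: this balances the two contributions to $J$ and gives $J(\varphi_\alpha,\psi_\alpha) \le \frac{C\alpha}{2\abs{\xi}^2} + \frac{C}{2(\alpha+1)} \le C'/\abs{\xi}$. Setting $C_0 := \max(\alpha_0,2)$ guarantees $\alpha \ge \alpha_0$ and $\alpha \ge 2$ whenever $\abs{\xi} \ge C_0$, so combining the numerator and denominator estimates yields $\mu(\abs{\xi}) \le \tilde E(\psi_\alpha)/J(\varphi_\alpha,\psi_\alpha) \le -\tfrac{g\jump{\rho_0}}{4C'}\abs{\xi}$, that is, $\lambda^2 \ge C_1\abs{\xi}$ with $C_1 = g\jump{\rho_0}/(4C')$ (a constant $C_2 > 0$ is then only needed to cover the trivial transition near $\abs{\xi} = C_0$). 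The only genuine obstacle I anticipate is locating this scaling: with $\alpha$ held fixed the argument yields merely $\lambda^2 \ge c$ for a constant $c$, while if $\alpha$ grows faster than $\abs{\xi}$ the term $\frac{1}{2\abs{\xi}^2}\int\rho_0(\psi_\alpha')^2$ fails to vanish — everything else is routine estimation of explicit integrals already essentially done in Lemma \ref{neg_inf}. If one prefers not to carry the parameter $\alpha$, an equivalent route is to test with a profile concentrated at the interface on length scale $1/\abs{\xi}$ (morally $\psi(x_3) \approx e^{-\abs{\xi}\abs{x_3}}$, cut off to vanish at $x_3 = -m,\ell$) and invoke the divergence form \eqref{energy_div_form}: concentrating the mass of $\psi$ at $x_3 = 0$ makes the negative boundary term $-\tfrac{g}{2}\jump{\rho_0}\psi^2(0)$ dominate $J(\varphi,\psi)$, again with a ratio of size $\abs{\xi}$.
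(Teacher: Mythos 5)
Your argument is correct and is essentially the paper's own proof: same test function family $\psi_\alpha$ (modulo the cosmetic choice of whether to carry the prefactor from Lemma~\ref{neg_inf}), same choice $\varphi_\alpha = -\psi_\alpha'/\abs{\xi}$ to kill the nonnegative term, and crucially the same scaling $\alpha \sim \abs{\xi}$, which you correctly identify as the decisive step. The paper makes exactly this choice ($\alpha = \abs{\xi}$) after recording the bounds $E \le -A_1 + A_2/(1+\alpha)$ and $A_7/(1+\alpha) \le J \le A_3/(1+\alpha) + \abs{\xi}^{-2}(A_4/(1+\alpha) + A_5 + \alpha A_6)$, which are the same estimates you derive.
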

\begin{proof}
For $\alpha \ge 2$ we consider the function $\psi_\alpha$ defined by 
\begin{equation}
 \psi_\alpha(x_3) = 
\begin{cases}
 \left(1-\frac{x_3}{\ell} \right)^{\alpha/2}, & x_3 \in [0,\ell) \\
  \left(1+\frac{x_3}{m} \right)^{\alpha/2}, &  x_3\in (-m,0). 
\end{cases}
\end{equation}
Then  according to \eqref{n_i_1} and \eqref{n_i_2}
\begin{equation}
 E(-\psi_\alpha'/\abs{\xi},\psi_\alpha) \le -A_1 + \frac{A_2}{1+\alpha}
\end{equation}
for two constants $A_1,A_2 >0$ depending on $\rho^\pm_0, P_{\pm}, g, m, \ell$ but not on $\alpha $ or $\abs{\xi}$.  On the other hand, since $\rho_0$ is bounded below and above, straightforward calculations yield the bounds
\begin{equation}
 J(-\psi_\alpha'/\abs{\xi},\psi_\alpha) = \hal \int_{-m}^\ell \rho_0 \left( \psi_\alpha^2 + \frac{(\psi_\alpha')^2}{\abs{\xi}^2}\right) \le \frac{A_3}{1+\alpha} + \frac{1}{\abs{\xi}^2}\left( \frac{A_4}{1+\alpha} + A_5 + \alpha A_6 \right) 
\end{equation}
and 
\begin{equation}
 J(-\psi_\alpha'/\abs{\xi},\psi_\alpha) \ge \frac{A_7}{1+\alpha}
\end{equation}
for constants $A_3, A_4, A_5, A_6, A_7>0$ depending on the same parameters but not $\alpha$ or $\abs{\xi}$.  From this we see that for $\alpha = \abs{\xi} \ge C_0$ sufficiently large, there are positive constants $C_1, C_2$ so that
\begin{equation}
 \frac{E(-\psi_\alpha'/\abs{\xi},\psi_\alpha)}{J(-\psi_\alpha'/\abs{\xi},\psi_\alpha)} \le  -C_1 \abs{\xi} + C_2.
\end{equation}
Since $-\lambda^2 = \inf E/J$, the result follows.
\end{proof}

We now show that the minimizers of Proposition \ref{min_exist} satisfy Euler-Langrange equations equivalent to \eqref{coupled}.

\begin{prop}\label{e_l_eqns}
Let $(\varphi,\psi)\in \mathcal{A}$ be the minimizers of $E$ constructed in Proposition \ref{min_exist}.  Let $\mu := E(\varphi,\psi) <0$.  Then $(\varphi,\psi)$ satisfy
\begin{equation}\label{e_l_0}
\begin{cases}
 \mu \rho_0 \varphi  =  \abs{\xi} ( P'(\rho_0)\rho_0  (\psi' + \abs{\xi} \varphi)) -  g\abs{\xi}\rho_0 \psi  \\
 \mu \rho_0 \psi  = - ( P'(\rho_0)\rho_0 (\psi' + \abs{\xi} \varphi )) ' - g \abs{\xi} \rho_0 \varphi 
\end{cases}
\end{equation}
along with the jump conditions
\begin{equation}
 \jump{\psi}=0 \text{ and }\jump{ P'(\rho_0)\rho_0 (\abs{\xi} \varphi + \psi') }     = 0
\end{equation}
and the boundary conditions $\psi(-m) = \psi(\ell) =0$.  Moreover, the solutions are smooth when restricted to either $(-m,0)$ or $(0,\ell)$.
\end{prop}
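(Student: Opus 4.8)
The plan is to derive the Euler--Lagrange equations from the constrained minimization of $E$ over $\mathcal{A}$ by the standard Lagrange multiplier method, and then to bootstrap regularity. First I would fix the minimizer $(\varphi,\psi)\in\mathcal{A}$ and, for an arbitrary test pair $(\tilde\varphi,\tilde\psi)\in L^2((-m,\ell))\times H_0^1((-m,\ell))$, consider the one-parameter family $(\varphi + t\tilde\varphi, \psi + t\tilde\psi)$ and compute $\frac{d}{dt}$ at $t=0$ of both $E$ and $J$. Since $(\varphi,\psi)$ minimizes $E$ subject to $J=1$ and $J$ is a smooth quadratic form with $DJ(\varphi,\psi)\neq 0$ (as $J(\varphi,\psi)=1$), there is a multiplier so that $DE(\varphi,\psi) = \mu\, DJ(\varphi,\psi)$ as linear functionals, where I would identify the multiplier with $\mu = E(\varphi,\psi)$ by testing against $(\varphi,\psi)$ itself and using homogeneity of degree $2$ (so $DE(\varphi,\psi)[(\varphi,\psi)] = 2E(\varphi,\psi)$ and likewise for $J$). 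Writing this out, the weak formulation reads
\begin{equation}
\int_{-m}^\ell P'(\rho_0)\rho_0(\psi'+\abs{\xi}\varphi)(\tilde\psi'+\abs{\xi}\tilde\varphi) - g\abs{\xi}\rho_0(\psi\tilde\varphi + \varphi\tilde\psi) = \mu\int_{-m}^\ell \rho_0(\varphi\tilde\varphi + \psi\tilde\psi)
\end{equation}
for all admissible $(\tilde\varphi,\tilde\psi)$.

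Next I would extract the two equations from this weak identity. Taking $\tilde\psi = 0$ and $\tilde\varphi$ arbitrary in $L^2$ immediately yields the first equation of \eqref{e_l_0} as an identity in $L^2$, hence pointwise a.e.; this is an algebraic relation expressing $\varphi$ in terms of $\psi,\psi'$, namely $(\lambda^2\rho_0 + \abs{\xi}^2 P'(\rho_0)\rho_0)\varphi = -\abs{\xi}(P'(\rho_0)\rho_0\psi' - g\rho_0\psi)$ after rearrangement, with $-\lambda^2 = \mu$. Then taking $\tilde\varphi = 0$ and $\tilde\psi\in C_c^\infty$ supported in $(-m,0)$ or in $(0,\ell)$ gives, via the definition of the weak derivative, that $P'(\rho_0)\rho_0(\psi'+\abs{\xi}\varphi)$ has a weak derivative on each subinterval and that the second equation of \eqref{e_l_0} holds distributionally there. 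Taking $\tilde\psi\in H_0^1((-m,\ell))$ general (allowed to be nonzero at $x_3=0$) and integrating by parts on $(-m,0)$ and $(0,\ell)$ separately, the bulk terms cancel by the equation just derived and the boundary terms at $x_3 = \pm m$ vanish since $\tilde\psi\in H_0^1$, leaving precisely the jump condition $\jump{P'(\rho_0)\rho_0(\abs{\xi}\varphi + \psi')} = 0$ (the other jump condition $\jump{\psi}=0$ is automatic since $\psi\in H_0^1((-m,\ell))\subset C^0$). The boundary conditions $\psi(-m)=\psi(\ell)=0$ are likewise built into the space $H_0^1((-m,\ell))$.

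The remaining point, and the one requiring a little care, is smoothness of $\varphi,\psi$ on each open subinterval. The first equation of \eqref{e_l_0} gives $\varphi$ explicitly as $\varphi = \big(-\abs{\xi}P'(\rho_0)\rho_0\psi' + g\abs{\xi}\rho_0\psi\big)\big/\big(\mu\rho_0 + \abs{\xi}^2 P'(\rho_0)\rho_0\big)$; since $\rho_0$ is smooth and bounded below by a positive constant on each subinterval, $P'\in C^\infty((0,\infty))$, and the denominator $\rho_0(\mu + \abs{\xi}^2 P'(\rho_0))$ — I should check it does not vanish; indeed from the completed-square form \eqref{energy_div_form} and the lower bound $\mu \geq -g\abs{\xi}$ one expects positivity of the coefficient in the relevant regime, or one argues directly that where it vanished the first equation would force $g\rho_0\psi = P'(\rho_0)\rho_0\psi'$, an ODE forcing — anyway the cleanest route is to substitute the formula for $\varphi$ into the second equation to obtain a single second-order linear ODE for $\psi$ of the form $(a(x_3)\psi')' + b(x_3)\psi' + c(x_3)\psi = 0$ with smooth coefficients and $a$ bounded away from zero on each subinterval. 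Then classical linear ODE regularity (bootstrapping: $\psi\in H^1 \Rightarrow a\psi' \in H^1 \Rightarrow \psi\in H^2 \Rightarrow \cdots$) gives $\psi\in C^\infty$ on each of $(-m,0)$ and $(0,\ell)$, and the explicit formula then upgrades $\varphi$ to $C^\infty$ there as well. The main obstacle is purely bookkeeping: verifying that the coefficient multiplying $\varphi$ in the first equation is nonvanishing (so that $\varphi$ can be solved for and the problem genuinely reduces to a nondegenerate second-order ODE in $\psi$ alone), and carefully tracking the boundary terms in the integration by parts to land exactly the stated jump condition rather than a spurious one.
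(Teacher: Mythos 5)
Your proposal is correct and takes essentially the same route as the paper: derive the weak Euler--Lagrange identity for the constrained minimizer, extract the two bulk equations with compactly supported test functions, recover the natural jump condition by integrating by parts against general $\tilde\psi\in H_0^1$, and bootstrap regularity on each subinterval. The only presentational difference is that the paper does not invoke the abstract Lagrange multiplier theorem but instead constructs the constrained variation by hand, solving $J(\varphi+t\varphi_0+s\varphi,\psi+t\psi_0+s\psi)=1$ for $s=\sigma(t)$ via the implicit function theorem and computing $\sigma'(0)$; your shortcut, identifying the multiplier as $\mu=E(\varphi,\psi)$ by testing $DE=\mu\,DJ$ against $(\varphi,\psi)$ and using Euler's identity for degree-$2$ homogeneity, is a legitimate and slightly cleaner way to reach the same weak formulation. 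One small point: the coefficient of $\varphi$ that you flag as ``requiring care'' is in fact trivially nonvanishing. Writing the first equation as $(\lambda^2\rho_0+\abs{\xi}^2P'(\rho_0)\rho_0)\varphi = -\abs{\xi}\bigl(P'(\rho_0)\rho_0\psi'-g\rho_0\psi\bigr)$ with $\lambda^2=-\mu>0$, the coefficient $\lambda^2\rho_0+\abs{\xi}^2P'(\rho_0)\rho_0$ is the sum of two strictly positive terms; note your subsequent display has a sign slip, writing $\mu\rho_0+\abs{\xi}^2P'(\rho_0)\rho_0$ where it should be $-\mu\rho_0+\abs{\xi}^2P'(\rho_0)\rho_0$. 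With that corrected, your substitution into the second equation gives a nondegenerate second-order ODE for $\psi$ with smooth coefficients on each subinterval, and the bootstrap goes through as you describe.
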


\begin{proof}
 Fix $(\varphi_0,\psi_0)\in L^2((-m,\ell)) \times H_0^1((-m,\ell))$.   Define
\begin{equation}
j(t,s)= J(\varphi+t\varphi_0 + s \varphi,\psi+t\psi_0 + s \psi) 
%= \hal \int_{-m}^\ell   \rho_0  (u+t\varphi_0 + s u)^2 + \rho_0 (w+t\psi_0 + s w)^2
\end{equation}
and note that $ j(0,0) = 1$.  Moreover $j$ is smooth and
\begin{equation}
 \frac{\partial j}{\partial t}(0,0) = \int_{-m}^\ell   \rho_0  \left[ \varphi_0 \varphi  + \psi_0 \psi  \right] 
\end{equation}
and
\begin{equation}
 \frac{\partial j}{\partial s}(0,0) = \int_{-m}^\ell   \rho_0  (\varphi^2 + \psi^2)  =2.
\end{equation}
So, by the inverse function theorem, we can solve for $s = \sigma(t)$ in a neighborhood of $0$ as a $C^1$ function so that $\sigma(0)=0$ and $j(t,\sigma(t))=1$.  We may differentiate the last equation to find
\begin{equation}
 \frac{\partial j}{\partial t}(0,0) + \frac{\partial j}{\partial s}(0,0) \sigma'(0) = 0,
\end{equation}
and hence that
\begin{equation}
 \sigma'(0) = -\hal \frac{\partial j}{\partial t}(0,0) = -\hal \int_{-m}^\ell   \rho_0  \left[ \varphi_0 \varphi  + \psi_0 \psi  \right].
\end{equation}

Since $(\varphi,\psi)$ are minimizers over $\mathcal{A}$,  we then have
\begin{equation}
 0 = \left. \frac{d}{dt}\right\vert_{t=0} E(\varphi+t\varphi_0 + \sigma(t) \varphi,\psi+t\psi_0 + \sigma(t) \psi),
\end{equation}
which implies that 
\begin{multline}
0= \int_{-m}^\ell P'(\rho_0)\rho_0 (\psi'+\abs{\xi} \varphi)(\psi_0' + \sigma'(0) \psi' + \abs{\xi} \varphi_0 + \abs{\xi} \sigma'(0) \varphi) \\
- g \abs{\xi} \rho_0 (\psi(\varphi_0 + \sigma'(0) \varphi) + \varphi(\psi_0 + \sigma'(0)\psi)    ).
\end{multline}
Rearranging and plugging in the value of $\sigma'(0)$, we may rewrite this equation as
\begin{equation}\label{eigenvalue_form}
 \int_{-m}^\ell  P'(\rho_0)\rho_0 (\psi'+\abs{\xi} \varphi) ( \psi_0' + \abs{\xi} \varphi_0) - g \abs{\xi} \rho_0(\psi \varphi_0 + \varphi \psi_0)
= \mu \int_{-m}^\ell   \rho_0  \left[ \varphi_0 \varphi  + \psi_0 \psi  \right]
\end{equation}
where the Lagrange multiplier (eigenvalue) is $ \mu = E(\varphi,\psi).$

By making variations with $\varphi_0, \psi_0$ compactly supported in either $(-m,0)$ or $(0,\ell)$, we find that $\varphi$ and $\psi$ satisfy the equations \eqref{e_l_0} in a weak sense in $(-m,0)$ and $(0,\ell)$.  Standard bootstrapping arguments then show that $(\varphi,\psi)$ are in $H^k((-m,0))$ (resp. $H^k((0,\ell))$)  for all $k\ge 0$ when restricted to $(-m,0)$ (resp. $(0,\ell)$), and hence the functions are smooth when restricted to either interval.  This implies that the equations are also classically satisfied on $(-m,0)$ and $(0,\ell)$.  Since $(\varphi,\psi)\in H^2$, the traces of the functions and their derivatives are well-defined at the endpoints $x_3 = -m,0,\ell$, and it remains to show that the jump conditions are satisfied at $x_3 =0$ and the boundary conditions satisfied at $x_3 =-m,\ell$.  Making variations with respect to arbitrary $\varphi_0$, $\psi_0 \in C_c^\infty((-m,\ell))$, we find that the jump condition
\begin{equation}
\jump{ P'(\rho_0)\rho_0 (\abs{\xi} \varphi + \psi') }     = 0.
\end{equation}
must be satisfied.  Note that the conditions $\jump{\psi}=0$  and $\psi(-m)=\psi(\ell)=0$ are satisfied trivially since $\psi \in H_0^1((-m,\ell)) \hookrightarrow C_0^{0,1/2}((-m,\ell))$.  
\end{proof}

We may now deduce the existence of a growing mode solution.

\begin{thm}\label{coupled_solution}
For any $\abs{\xi}> 0$ there exists a solution $(\varphi,\psi)$ with $\lambda = \lambda(\abs{\xi}) > 0$ to \eqref{coupled} satisfying the appropriate jump and boundary conditions.  For these solutions $\psi(0)\neq 0$, and the solutions are smooth when restricted to $(-m,0)$ or $(0,\ell)$.
\end{thm}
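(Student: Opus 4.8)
The plan is to simply assemble the ingredients already in place: Lemma~\ref{neg_inf}, Proposition~\ref{min_exist}, Lemma~\ref{non_zero_origin}, and Proposition~\ref{e_l_eqns}. First I would fix $\abs{\xi}>0$ and apply Proposition~\ref{min_exist} to obtain a pair $(\varphi,\psi)\in\mathcal{A}$ realizing $\inf_{\mathcal{A}}E$. By Lemma~\ref{neg_inf} this infimum is strictly negative, so the real number $\mu:=E(\varphi,\psi)=\inf_{\mathcal{A}}E$ is negative, and I may define $\lambda:=\sqrt{-\mu}>0$, so that $-\lambda^2=\mu$. (Here I am using the earlier reduction showing $-\lambda^2\in\Rn{}$ and that a real-valued pair with $-\lambda^2=\mu<0$ automatically provides a solution of the originally complex system \eqref{coupled}, real functions being a special case.)

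Next I would invoke Proposition~\ref{e_l_eqns}: since $\mu<0$, the minimizer satisfies the Euler--Lagrange system \eqref{e_l_0} with Lagrange multiplier $\mu$, together with the jump conditions $\jump{\psi}=0$ and $\jump{P'(\rho_0)\rho_0(\abs{\xi}\varphi+\psi')}=0$, the boundary conditions $\psi(-m)=\psi(\ell)=0$, and smoothness of $(\varphi,\psi)$ on each of $(-m,0)$ and $(0,\ell)$. Substituting $\mu=-\lambda^2$ turns \eqref{e_l_0} into exactly \eqref{coupled}, so $(\varphi,\psi)$ with this $\lambda=\lambda(\abs{\xi})$ is the desired growing-mode solution. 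Finally, since $E(\varphi,\psi)=\mu<0$, Lemma~\ref{non_zero_origin} yields $\psi(0)\neq0$; in particular the solution is nontrivial, and the smoothness claim is exactly the one recorded in Proposition~\ref{e_l_eqns}.

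In short, there is no genuinely new argument to make here — the theorem is a packaging of the preceding lemmas. The only points that need a moment's care are (i) checking that the multiplier $\mu$ appearing in Proposition~\ref{e_l_eqns} coincides with $-\lambda^2$, which holds by the definition $\lambda:=\sqrt{-\mu}$, and (ii) making sure $\lambda$ is real and positive rather than purely imaginary — this is precisely what the strict negativity in Lemma~\ref{neg_inf} buys us, and it is the one substantive input (along with the direct-method compactness in Proposition~\ref{min_exist}) without which the statement would fail. Thus the ``hard part'' has already been done; the proof of the theorem itself is a two-line citation of Propositions~\ref{min_exist} and~\ref{e_l_eqns} and Lemmas~\ref{neg_inf} and~\ref{non_zero_origin}.
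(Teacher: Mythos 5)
Your proposal is correct and follows exactly the same route as the paper: take the minimizer from Proposition~\ref{min_exist}, set $\lambda=\sqrt{-\mu}$ (positive by Lemma~\ref{neg_inf}), invoke Proposition~\ref{e_l_eqns} for the equations, jump/boundary conditions, and smoothness, and conclude $\psi(0)\neq 0$ from Lemma~\ref{non_zero_origin}. The paper's own proof is just a terser version of the same three-citation argument.
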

\begin{proof}
Let $\lambda^2(\xi) = - \mu = - E(\varphi,\psi) > 0$, where $(\varphi,\psi)$ are the minimizers of $E$ over the set $\mathcal{A}$ constructed in Proposition \ref{min_exist}.  Then Proposition \ref{e_l_eqns} shows that $(\varphi,\psi)$ solve the equations \eqref{coupled} with $\lambda = \sqrt{-\mu} >0$.  Lemma \ref{non_zero_origin} implies that $\psi(0)\neq 0$.
\end{proof}

The next result provides an estimate for the $H^k$ norm of the solutions $(\varphi,\psi)$ with $\abs{\xi}$ varying, which will be useful in the next section when such solutions are integrated in a Fourier synthesis.  To emphasize the dependence on $\abs{\xi}$ we will write these solutions as $\varphi(\abs{\xi})  = \varphi(\abs{\xi},x_3)$ and $\psi(\abs{\xi})=\psi(\abs{\xi},x_3)$.

\begin{lem}\label{sobolev_bounds}
Let $\varphi(\abs{\xi}),\psi(\abs{\xi})$ be the solutions to \eqref{coupled} constructed in Theorem \ref{coupled_solution}.  Let $C_0,C_1,C_2>0$ be the constants from  Lemma \ref{eigen_lower}.  Fix $0<R_1<\infty$ so that 
\begin{equation}
R_1 > \max\{C_0,2C_2/C_1\}. 
\end{equation}
Then for $\abs{\xi} \ge R_1$, for each $k \ge 0$ there exists a constant $A_k >0$ depending on $\rho^\pm_0, P_{\pm}, g, m, \ell$ so that
\begin{equation}\label{s_b_0}
 \norm{\varphi(\abs{\xi})}_{H^k((-m,0))} + \norm{\psi(\abs{\xi})}_{H^k((-m,0))}  + \norm{\varphi(\abs{\xi})}_{H^k((0,\ell))} + \norm{\psi(\abs{\xi})}_{H^k((0,\ell))} \le A_k \sum_{j=0}^k \abs{\xi}^j.
\end{equation}
Also, there exists a $B_0>0$ depending on the same parameters so that for any $\abs{\xi}>0$
\begin{equation}\label{s_b_00}
 \norm{\sqrt{\varphi^2(\abs{\xi}) + \psi^2(\abs{\xi})}}_{L^2((-m,\ell))} \ge B_0.
\end{equation}

\end{lem}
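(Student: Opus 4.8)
The plan is to extract the quantitative bounds directly from the variational characterization and the Euler--Lagrange equations, using the normalization $J(\varphi,\psi)=1$ as the base case and bootstrapping in $x_3$. First I would record the two elementary consequences of $J(\varphi(\abs{\xi}),\psi(\abs{\xi}))=1$: since $\rho_0$ is bounded above and below by positive constants on $(-m,\ell)$, the normalization gives $\norm{\varphi}_{L^2}^2 + \norm{\psi}_{L^2}^2 \le 2/\inf \rho_0 =: (B_0)^{-2}$ (which immediately yields the lower bound \eqref{s_b_00} after noticing the reverse inequality $J \le \tfrac12 \sup\rho_0 \, (\norm{\varphi}_{L^2}^2 + \norm{\psi}_{L^2}^2)$ forces $\norm{\varphi}_{L^2}^2+\norm{\psi}_{L^2}^2 \ge 2/\sup\rho_0$), establishing the $k=0$ case of \eqref{s_b_0} up to the $\psi'$ term. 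To control $\psi'$, I would use the divergence-form identity \eqref{energy_div_form} together with the bound $\mu = E(\varphi,\psi) \le -C_1\abs{\xi}+C_2 \le 0$ from Lemma \ref{eigen_lower} (valid for $\abs{\xi}\ge R_1 > C_0$, and in fact $\mu \le -\tfrac12 C_1 \abs{\xi}$ for $\abs{\xi} \ge R_1 > 2C_2/C_1$): rewriting $E$ via \eqref{E_lower_bound} as $E = -g\abs{\xi} + \tfrac12\int P'(\rho_0)\rho_0(\psi'+\abs{\xi}\varphi)^2 + g\abs{\xi}\rho_0(\varphi-\psi)^2$ shows $\tfrac12 \int P'(\rho_0)\rho_0(\psi'+\abs{\xi}\varphi)^2 = E + g\abs{\xi} - g\abs{\xi}\int\rho_0(\varphi-\psi)^2 \le g\abs{\xi}$ plus lower-order terms, so $\norm{\psi'+\abs{\xi}\varphi}_{L^2}^2 \le C\abs{\xi}$, and then $\norm{\psi'}_{L^2} \le \norm{\psi'+\abs{\xi}\varphi}_{L^2} + \abs{\xi}\norm{\varphi}_{L^2} \le C(\abs{\xi}^{1/2} + \abs{\xi}) \le C'\abs{\xi}$, consistent with $A_1\sum_{j=0}^1 \abs{\xi}^j$.

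The inductive step is where the real work lies. Assume the bound \eqref{s_b_0} holds through order $k$ on each of $(-m,0)$ and $(0,\ell)$. On each subinterval the Euler--Lagrange system \eqref{e_l_0} is a regular linear ODE system with smooth coefficients (by the last assertion of Proposition \ref{e_l_eqns}); solving the second equation of \eqref{e_l_0} for $(P'(\rho_0)\rho_0(\psi'+\abs{\xi}\varphi))'$ gives
\begin{equation}
 (P'(\rho_0)\rho_0(\psi'+\abs{\xi}\varphi))' = -\mu\rho_0\psi - g\abs{\xi}\rho_0\varphi,
\end{equation}
and solving the first for $\varphi$ gives $\varphi = (\abs{\xi}(P'(\rho_0)\rho_0(\psi'+\abs{\xi}\varphi)) - g\abs{\xi}\rho_0\psi)/(\mu\rho_0)$ --- but the cleanest route is to treat $(\psi, \chi)$ with $\chi := P'(\rho_0)\rho_0(\psi'+\abs{\xi}\varphi)$ as the unknowns, since then $\chi' = -\mu\rho_0\psi - g\abs{\xi}\rho_0\varphi$ and $\varphi$ is expressed algebraically in terms of $\chi$ and $\psi$, while $\psi' = \chi/(P'(\rho_0)\rho_0) - \abs{\xi}\varphi$. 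Differentiating these relations $k-1$ times and using $\abs{\mu} \le g\abs{\xi}$ (from the Remark following Proposition \ref{min_exist}), every appearance of a new derivative costs at most one factor of $\abs{\xi}$; combined with the inductive hypothesis and the fact that the coefficient functions $P'(\rho_0), \rho_0, 1/P'(\rho_0), 1/\rho_0$ and all their derivatives are bounded on each subinterval, one obtains $\norm{\partial_{x_3}^{k+1}\varphi}_{L^2} + \norm{\partial_{x_3}^{k+1}\psi}_{L^2} \le C\abs{\xi}\sum_{j=0}^{k}\abs{\xi}^j = C\sum_{j=1}^{k+1}\abs{\xi}^j$, which closes the induction since the lower-order norms are already controlled by $A_k\sum_{j=0}^k\abs{\xi}^j$.

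The main obstacle I anticipate is bookkeeping rather than conceptual: one must track the powers of $\abs{\xi}$ carefully through the repeated differentiation, making sure that the change of variables to $(\psi,\chi)$ does not hide a derivative of $\varphi$ that would cost an extra factor of $\abs{\xi}$, and that the eigenvalue bound $\abs{\mu}\le g\abs{\xi}$ is genuinely sharp enough (it is --- $\mu$ scales like $\abs{\xi}$, not $\abs{\xi}^2$, which is exactly why the bound is $\sum_{j=0}^k \abs{\xi}^j$ and not $\abs{\xi}^{2k}$). A secondary point is that the constants $A_k$ must be shown independent of $\abs{\xi}$; this follows because $\rho_0$ and its reciprocal, together with $P'(\rho_0)$ and its reciprocal (here one uses $1/P'_\pm \in L^\infty_{loc}$ and smoothness of $\rho_0$ on each closed subinterval), have $\abs{\xi}$-independent bounds on $[-m,0]$ and $[0,\ell]$. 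The lower bound \eqref{s_b_00} is immediate from the $J$-normalization as noted above and requires no induction.
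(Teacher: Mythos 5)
Your proposal is correct and follows essentially the same route as the paper: normalization $J=1$ for the $L^2$ bounds in both directions, introduction of the auxiliary variable $\chi = P'(\rho_0)\rho_0(\psi'+\abs{\xi}\varphi)$ (the paper calls it $\omega$), the Euler--Lagrange system written as first-order relations for $\chi,\chi'$, and the two-sided eigenvalue bound $\tfrac{C_1}{2}\abs{\xi} \le \abs{\mu} \le g\abs{\xi}$ to close the induction with exactly one factor of $\abs{\xi}$ per derivative. The only cosmetic difference is that you separately derive the $k=1$ bound from the energy identity \eqref{E_lower_bound}, whereas the paper obtains it directly from the same inductive step applied at $k=0$; both work.
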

\begin{proof}
We begin with the proof of \eqref{s_b_0}.  For simplicity we will prove an estimate of the $H^k$ norms only on the interval $(0,\ell)$.  A bound on $(-m,0)$ follows similarly, and the result follows by adding the two.  First note that the choice of $R_1$, when combined with Lemma \ref{eigen_lower} and the bound \eqref{E_lower_bound} implies that 
\begin{equation}\label{s_b_1}
 -g \abs{\xi} \le \mu \le C_2 - C_1\abs{\xi} \le -\frac{C_1}{2} \abs{\xi}.
\end{equation}
Recall also that $\rho_0$ is smooth on each interval $(0,\ell)$ and $(-m,0)$ and bounded above and below.  Throughout the proof we will let $C>0$ denote a generic constant depending on the appropriate parameters.

We proceed by induction on $k$.  For $k=0$ the fact that $(\varphi(\abs{\xi}),\psi(\abs{\xi}))\in \mathcal{A}$ implies that there is a constant $A_0>0$ depending on the various parameters so that
\begin{equation}
\norm{\varphi(\abs{\xi})}_{L^2((0,\ell))} + \norm{\psi(\abs{\xi})}_{L^2((0,\ell))} \le A_0.
\end{equation}

Suppose now that the bound holds some $k\ge 0$, i.e.
\begin{equation}
 \norm{\varphi(\abs{\xi})}_{H^k((0,\ell))} + \norm{\psi(\abs{\xi})}_{H^k((0,\ell))} \le A_k \sum_{j=0}^k \abs{\xi}^j.
\end{equation}
Define $\omega(\abs{\xi}):= P_+'(\rho_0)\rho_0(\psi'(\abs{\xi}) + \abs{\xi} \varphi(\abs{\xi}))$, where $' = \partial_{x_3}$.  Then \eqref{e_l_0} implies that
\begin{equation}\label{s_b_2}
 \begin{cases}
  \omega(\abs{\xi}) = \mu \abs{\xi}^{-1} \rho_0 \varphi(\abs{\xi}) + g \rho_0 \psi(\abs{\xi}) \\
  \omega'(\abs{\xi}) = - \mu \rho_0 \psi(\abs{\xi}) - g \abs{\xi} \rho_0 \varphi(\abs{\xi}).
 \end{cases}
\end{equation}
These equations and \eqref{s_b_1} then imply that 
\begin{equation}
 \norm{\omega(\abs{\xi})}_{H^k((0,\ell))} \le C( \norm{\varphi(\abs{\xi})}_{H^k((0,\ell))} + \norm{\psi(\abs{\xi})}_{H^k((0,\ell))})
\end{equation}
and
\begin{equation}
 \norm{\omega'(\abs{\xi})}_{H^k((0,\ell))} \le C \abs{\xi} ( \norm{\varphi(\abs{\xi})}_{H^k((0,\ell))} + \norm{\psi(\abs{\xi})}_{H^k((0,\ell))})
\end{equation}
so that $\norm{\omega(\abs{\xi})}_{H^{k+1}((0,\ell))} \le C \sum_{j=0}^{k+1} \abs{\xi}^j$.  But then the definition of $\omega(\abs{\xi})$ implies that 
\begin{equation}
 \norm{\psi'(\abs{\xi})}_{H^k((0,\ell))} \le   \norm{\frac{\omega(\abs{\xi})}{P'_+(\rho_0)\rho_0} }_{H^k((0,\ell))} + \abs{\xi} \norm{\varphi(\abs{\xi})}_{H^k((0,\ell))}
\end{equation}
so that $\norm{\psi(\abs{\xi})}_{H^{k+1}((0,\ell))} \le C \sum_{j=0}^{k+1} \abs{\xi}^j$.  Returning to the first equation in \eqref{s_b_2}, we see that
\begin{equation}
 \norm{\varphi(\abs{\xi})}_{H^{k+1}((0,\ell))} \le \frac{g\abs{\xi}}{\mu} \norm{ \psi(\abs{\xi})}_{H^{k+1}((0,\ell))}  + \frac{\abs{\xi}}{\mu} \norm{\frac{\varphi(\abs{\xi})}{\rho_0}  }_{H^{k+1}((0,\ell))}.
\end{equation}
Invoking the bound \eqref{s_b_1} again, we deduce that $\norm{\varphi(\abs{\xi})}_{H^{k+1}((0,\ell))} \le C \sum_{j=0}^{k+1} \abs{\xi}^j$.  Hence 
\begin{equation}
 \norm{\varphi(\abs{\xi})}_{H^{k+1}((0,\ell))} + \norm{\psi(\abs{\xi})}_{H^{k+1}((0,\ell))} \le A_{k+1} \sum_{j=0}^{k+1} \abs{\xi}^j
\end{equation}
for some constant $A_{k+1}>0$ depending on the parameters, i.e. the bound holds for $k+1$.  By induction, the bound holds for all $k\ge 0$.

To prove \eqref{s_b_00} we again utilize the fact that for any $\abs{\xi} >0$ we have  $(\varphi(\abs{\xi}),\psi(\abs{\xi}))\in \mathcal{A}$.  Since $\rho_0$ is bounded above and below, the bound follows.

\end{proof}

A solution to \eqref{coupled} gives rise to a solution to the system of equations \eqref{w_1_equation}--\eqref{w_3_equation} for the growing mode velocity, $w$, as well.

\begin{cor}\label{w_soln}
For any $\xi =(\xi_1,\xi_2) \neq (0,0)$ there exists a solution $\varphi = \varphi(\xi,x_3)$, $\theta = \theta(\xi,x_3)$, $\psi = \psi(\abs{\xi},x_3)$  and $\lambda(\abs{\xi})>0$ to  \eqref{w_1_equation}--\eqref{w_3_equation} satisfying the appropriate jump and boundary conditions so that $\psi(0)\neq 0$; the solutions are smooth when restricted to $(-m,0)$ or $(0,\ell)$.  The solutions are equivariant in $\xi$ in the sense that if $R\in SO(2)$ is a rotation operator, then 
\begin{equation}
\begin{pmatrix}
\varphi(R \xi,x_3) \\ \theta(R \xi,x_3) \\ \psi(R \xi,x_3) 
\end{pmatrix}
= 
\begin{pmatrix}
R_{11} & R_{12} & 0 \\ 
R_{21} & R_{22} & 0 \\ 
0      & 0      & 1
\end{pmatrix}
\begin{pmatrix}
\varphi(\xi,x_3) \\ \theta(\xi,x_3) \\ \psi(\xi,x_3) 
\end{pmatrix}.
\end{equation}
\end{cor}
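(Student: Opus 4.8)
The plan is to deduce the general-$\xi$ statement from Theorem \ref{coupled_solution} by \emph{undoing} the rotation that was used to pass from \eqref{w_1_equation}--\eqref{w_3_equation} to \eqref{coupled}. The first observation is that the pair \eqref{coupled}, together with $\theta \equiv 0$, is precisely the system \eqref{w_1_equation}--\eqref{w_3_equation} evaluated at the frequency $(\abs{\xi},0)$: setting $\xi_1 = \abs{\xi}$, $\xi_2 = 0$ and $\theta = 0$, equation \eqref{w_2_equation} reads $\lambda^2 \rho_0 \cdot 0 = 0$, while \eqref{w_1_equation} and \eqref{w_3_equation} reduce to the two equations of \eqref{coupled}. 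Hence, given $\xi = (\xi_1,\xi_2) \neq (0,0)$, Theorem \ref{coupled_solution} supplies $\bar{\varphi} := \varphi(\abs{\xi},\cdot)$, $\psi(\abs{\xi},\cdot)$ and $\lambda(\abs{\xi}) > 0$ solving \eqref{w_1_equation}--\eqref{w_3_equation} at frequency $(\abs{\xi},0)$ (with vanishing middle component), smooth on $(-m,0)$ and on $(0,\ell)$, with $\psi(\abs{\xi},0) \neq 0$ and satisfying $\jump{\psi(\abs{\xi},\cdot)} = 0$, $\jump{P'(\rho_0)\rho_0(\abs{\xi}\bar{\varphi} + \psi'(\abs{\xi},\cdot))} = 0$, and $\psi(\abs{\xi},-m) = \psi(\abs{\xi},\ell) = 0$.

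Next I would let $R_\xi$ be the unique element of $SO(2)$ with $R_\xi \xi = \abs{\xi}\, e_1$; uniqueness holds because the only rotation in $SO(2)$ fixing a nonzero vector is the identity, so $SO(2)$ acts simply transitively on each circle centered at $0$. Define $(\varphi(\xi,x_3),\theta(\xi,x_3)) := R_\xi^T(\bar{\varphi}(x_3),0)$ and $\psi(\xi,x_3) := \psi(\abs{\xi},x_3)$. Applying the rotational covariance of \eqref{w_1_equation}--\eqref{w_3_equation} recorded in the paragraph preceding \eqref{coupled}, this time with the rotation $R_\xi^T$, the triple $(\varphi(\xi,\cdot),\theta(\xi,\cdot),\psi(\xi,\cdot))$ solves \eqref{w_1_equation}--\eqref{w_3_equation} at the frequency $R_\xi^T(R_\xi\xi) = \xi$ with the same $\lambda = \lambda(\abs{\xi}) > 0$. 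Smoothness on each of $(-m,0)$ and $(0,\ell)$ is immediate since $\varphi(\xi,\cdot)$ and $\theta(\xi,\cdot)$ are fixed linear combinations of $\bar\varphi$; the property $\psi(\xi,0) \neq 0$ and the conditions $\jump{\psi(\xi,\cdot)} = 0$, $\psi(\xi,-m) = \psi(\xi,\ell) = 0$ are inherited verbatim. For the remaining jump condition, orthogonality of $R_\xi$ gives $\xi_1\varphi(\xi,\cdot) + \xi_2\theta(\xi,\cdot) = \xi\cdot(\varphi(\xi,\cdot),\theta(\xi,\cdot)) = (R_\xi\xi)\cdot R_\xi(\varphi(\xi,\cdot),\theta(\xi,\cdot)) = (\abs{\xi},0)\cdot(\bar\varphi,0) = \abs{\xi}\bar\varphi$, so $\mathcal{F}(\diverge w) = \abs{\xi}\bar\varphi + \psi'(\abs{\xi},\cdot)$ and the jump condition $\jump{P'(\rho_0)\rho_0\,\mathcal{F}(\diverge w)} = 0$ holds because it does so for the solution of Theorem \ref{coupled_solution}.

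Finally, for the equivariance claim: given $R \in SO(2)$, the rotation $R_\xi R^T \in SO(2)$ sends $R\xi$ to $R_\xi \xi = \abs{\xi}\,e_1 = \abs{R\xi}\,e_1$, so by the uniqueness that defines $R_{R\xi}$ we get $R_{R\xi} = R_\xi R^T$. Therefore $(\varphi(R\xi,\cdot),\theta(R\xi,\cdot)) = R_{R\xi}^T(\bar\varphi,0) = R\,R_\xi^T(\bar\varphi,0) = R(\varphi(\xi,\cdot),\theta(\xi,\cdot))$, while $\psi(R\xi,\cdot) = \psi(\abs{R\xi},\cdot) = \psi(\abs{\xi},\cdot) = \psi(\xi,\cdot)$, which is exactly the stated block-matrix identity. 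The argument uses no analysis beyond Theorem \ref{coupled_solution}: it is essentially bookkeeping, and the only points needing care are keeping the transpose/inverse directions straight and verifying that the family $\{R_\xi\}_{\xi \neq 0}$ is consistently defined — which is precisely what simple transitivity of the $SO(2)$-action on circles guarantees.
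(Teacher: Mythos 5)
Your proposal is correct and follows the same route as the paper: apply the rotational covariance noted before \eqref{coupled} to transport the one-dimensional solution from Theorem \ref{coupled_solution} at frequency $(\abs{\xi},0)$ back to a general $\xi$ via $R_\xi^{-1} = R_\xi^T$, with $\psi$ depending only on $\abs{\xi}$. You simply spell out more explicitly what the paper dispatches in a sentence (the verification of the jump condition and the identity $R_{R\xi} = R_\xi R^T$ giving equivariance).
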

\begin{proof}
 We may find a rotation operator $R \in SO(2)$ so that $R \xi = (\abs{\xi},0)$.  Define
\begin{equation}
(\varphi(\xi,x_3),\theta(\xi,x_3)) = R^{-1} (\varphi(\abs{\xi},x_3),0) 
\end{equation}
and $\psi(\xi,x_3) = \psi(\abs{\xi},x_3)$, where $\varphi(\abs{\xi})$ and $\psi(\abs{\xi})$ are the solutions from Theorem \ref{coupled_solution}.  This gives a solution to \eqref{w_1_equation}--\eqref{w_3_equation}.  The equivariance in $\xi$ follows from the definition.
\end{proof}

%%%%%%%%%%%%%%%%%%%%%%%%%%%%%%%%%%%%%%%%%%%%%%%%%%%%%%%%%
\subsection{Fourier synthesis}
%%%%%%%%%%%%%%%%%%%%%%%%%%%%%%%%%%%%%%%%%%%%%%%%%%%%%%%%%

In this section we will use Fourier synthesis to build growing solutions to \eqref{linearized} out of the solutions constructed in the previous section (Corollary \ref{w_soln}) for fixed spatial frequency $\xi \in \Rn{2}$.   The solutions will be constructed to grow in the piecewise Sobolev space of order $k$, $H^k$, defined by \eqref{sob_def}.

\begin{thm}\label{growing_mode_soln}
Let $R_1\le R_2 < R_3 < \infty,$ where $R_1>0$ is the constant from Lemma \ref{sobolev_bounds}.  Let $f\in C_c^\infty(\Rn{2})$ be a real-valued function so that $f(\xi) = f(\abs{\xi})$ and $\supp(f)\subset B(0,R_3)\backslash B(0,R_2)$.  For $\xi \in \Rn{2}$ define
\begin{equation}
 \hat{w}(\xi,x_3) =  -i \varphi(\xi,x_3) e_1 - i \theta(\xi,x_3) e_2 + \psi(\xi,x_3) e_3,
\end{equation}
where $\varphi,\theta,\psi$ are the solutions provided by Corollary \ref{w_soln}.  Writing $x'\cdot \xi = x_1 \xi_1 + x_2 \xi_2$, we define
\begin{equation}\label{g_m_s_1}
 \eta(x,t) = \frac{1}{4\pi^2} \int_{\Rn{2}} f(\xi) \hat{w}(\xi,x_3) e^{\lambda(\abs{\xi}) t} e^{i x'\cdot \xi} d\xi,
\end{equation}
\begin{equation}
 v(x,t) = \frac{1}{4\pi^2}  \int_{\Rn{2}}\lambda(\abs{\xi}) f(\xi) \hat{w}(\xi,x_3) e^{\lambda(\abs{\xi}) t} e^{i x'\cdot \xi} d\xi ,
\end{equation}
and
\begin{equation}\label{g_m_s_2}
 q(x,t) 
= -\frac{\rho_0(x_3)}{4\pi^2}  \int_{\Rn{2}} f(\xi) (\xi_1 \varphi(\xi,x_3) +  \xi_2 \theta(\xi,x_3) + \partial_{x_3} \psi(\xi,x_3)) e^{\lambda(\abs{\xi}) t} e^{i x'\cdot \xi} d\xi.
\end{equation}
Then $\eta,v,q$ are real-valued solutions to the linearized equations \eqref{linearized} along with the corresponding jump and boundary conditions.  The solutions are equivariant in the sense that if $R\in SO(3)$ is a rotation that keeps the vector $e_3$ fixed, then
\begin{equation}\label{g_m_s_00}
\eta(Rx,t) = R \eta(x,t), v(Rx,t) = R v(x,t), \text{and } q(Rx,t) = q(x,t).
\end{equation}
For every $k \in \mathbb{N}$ we have the estimate
\begin{equation}\label{g_m_s_0} 
\norm{\eta(0)}_{H^k} + \norm{v(0)}_{H^k} + \norm{q(0)}_{H^k} \le \bar{C}_k \left( \int_{\Rn{2}} (1+\abs{\xi}^2)^{k+1} \abs{f(\xi)}^2 d\xi \right)^{1/2} < \infty
\end{equation}
for a constant $\bar{C}_k>0$ depending on the parameters $\rho^\pm_0, P_{\pm}, g, m, \ell$;  moreover, for every $t > 0$ we have  $\eta(t),v(t),q(t) \in H^k(\Omega)$ and  
\begin{equation}\label{g_m_s_3}
\begin{cases}
e^{t \sqrt{C_1 R_2 - C_2}} \norm{\eta(0)}_{H^k} \le   \norm{\eta(t)}_{H^k} \le e^{t \sqrt{g R_3}} \norm{\eta(0)}_{H^k}     \\
e^{t \sqrt{C_1 R_2 - C_2}} \norm{v(0)}_{H^k} \le \norm{v(t)}_{H^k}  \le  e^{t \sqrt{g R_3}} \norm{v(0)}_{H^k}  \\
e^{t \sqrt{C_1 R_2 - C_2}} \norm{q(0)}_{H^k} \le \norm{q(t)}_{H^k}  \le e^{t \sqrt{g R_3}} \norm{q(0)}_{H^k} 
\end{cases}
\end{equation}
where $C_1,C_2>0$ are the constants from Lemma \ref{eigen_lower} and $\sqrt{C_1 R_2 - C_2}>0$. 
\end{thm}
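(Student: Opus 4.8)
The plan is to verify directly that the Fourier-synthesized functions $\eta, v, q$ satisfy \eqref{linearized} and the jump/boundary conditions, and then to extract the norm bounds from the $\abs{\xi}$-uniform estimates of Lemma \ref{sobolev_bounds}. First I would check that the integrals defining $\eta(x,t), v(x,t), q(x,t)$ converge absolutely and may be differentiated under the integral sign: since $f$ has compact support in the annulus $B(0,R_3)\setminus B(0,R_2)$, the frequency integration is over a bounded set; on this set $\lambda(\abs{\xi}) \le \sqrt{g R_3}$ by the Remark following Proposition \ref{min_exist}, while by Lemma \ref{sobolev_bounds} the functions $\varphi(\xi,\cdot), \theta(\xi,\cdot), \psi(\xi,\cdot)$ and their $x_3$-derivatives up to any order are bounded polynomially in $\abs{\xi}$, hence uniformly bounded on the support of $f$. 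This also gives smoothness in $x$ and $t$ and justifies that $\dt \eta = v$ pointwise (differentiating \eqref{g_m_s_1} in $t$ brings down $\lambda(\abs{\xi})$, matching the integrand of $v$) and that $\dt q + \rho_0 \diverge v = 0$ (the $x$-derivatives of $v$ produce $i\xi_1, i\xi_2, \partial_{x_3}$ acting on $\hat w$, which after multiplication by $\rho_0$ reproduces the integrand of $-\dt q$). The third equation of \eqref{linearized} holds because, for each fixed $\xi$, the functions $(\varphi,\theta,\psi,\lambda)$ satisfy the ODE system \eqref{w_1_equation}--\eqref{w_3_equation}, which is precisely the horizontal Fourier transform of \eqref{linear_timeless}; undoing the Fourier transform and using $w = \lambda \tilde\eta$, $\tilde q = -\rho_0 \diverge w / \lambda$ recovers \eqref{linearized}. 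The jump conditions $\jump{v\cdot e_3}=0$, $\jump{P'(\rho_0) q}=0$ and the boundary conditions $v_3 = 0$ at $x_3 = -m, \ell$ follow by taking the Fourier transform of the corresponding conditions on $\psi$ and $P'(\rho_0)\rho_0 \diverge w$ established in Theorem \ref{coupled_solution}/Corollary \ref{w_soln}, integrated against $f$.

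Next I would address reality of the solutions. Because $f$ is real and radial and, by Corollary \ref{w_soln}, $\varphi(\xi,x_3), \theta(\xi,x_3), \psi(\xi,x_3)$ transform equivariantly under $SO(2)$ and are built from the real functions $\varphi(\abs{\xi}), \psi(\abs{\xi})$, one checks that $\hat w(-\xi,x_3) = \overline{\hat w(\xi,x_3)}$; together with $\lambda(\abs{\xi}) = \lambda(\abs{-\xi})$ and $f(-\xi) = f(\xi)$ this makes the integrands in \eqref{g_m_s_1}--\eqref{g_m_s_2} satisfy the Hermitian symmetry that guarantees the inverse Fourier transform is real-valued. The equivariance \eqref{g_m_s_00} under rotations fixing $e_3$ follows from the equivariance in Corollary \ref{w_soln} and the rotation-invariance of the measure $d\xi$ and of $f$ after the change of variables $\xi \mapsto R'\xi$, where $R'$ is the induced rotation of $\Rn{2}$.

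For the estimates, I would work frequency by frequency using the Parseval identity \eqref{parseval} and the definition of the piecewise norm. By construction, for fixed $x_3$ the horizontal Fourier transform of $\eta(\cdot,t)$ at frequency $\xi$ is $f(\xi)\hat w(\xi,x_3) e^{\lambda(\abs{\xi})t}$, so
\begin{equation}
\norm{\eta(t)}_{H^k(\Omega_\pm)}^2 = \frac{1}{4\pi^2}\sum_{j=0}^k \int_{\Rn{2}} (1+\abs{\xi}^2)^{k-j} \abs{f(\xi)}^2 e^{2\lambda(\abs{\xi})t} \norm{\partial_{x_3}^j \hat w(\xi,\cdot)}_{L^2(I_\pm)}^2 d\xi,
\end{equation}
and similarly for $v$ (with an extra factor $\lambda(\abs{\xi})^2 \le g R_3$) and $q$. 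The bound \eqref{g_m_s_0} then follows by inserting the Lemma \ref{sobolev_bounds} estimate $\norm{\partial_{x_3}^j\hat w(\xi,\cdot)}_{L^2(I_\pm)} \le C_k \sum_{i=0}^{k}\abs{\xi}^i \lesssim (1+\abs{\xi}^2)^{k/2}$ valid on $\supp f \subset \{\abs{\xi}\ge R_1\}$, absorbing the polynomial weights and the bounded factor $e^{2\lambda t}$ at $t=0$ into $\bar C_k$, which produces the claimed $(1+\abs{\xi}^2)^{k+1}$ weight; finiteness is immediate since $f \in C_c^\infty$. For \eqref{g_m_s_3}, on $\supp f$ we have $\abs{\xi}\ge R_2$, so Lemma \ref{eigen_lower} gives $\lambda(\abs{\xi})^2 \ge C_1\abs{\xi} - C_2 \ge C_1 R_2 - C_2 > 0$, while the Remark gives $\lambda(\abs{\xi})^2 \le g\abs{\xi} \le g R_3$; hence $e^{2t\sqrt{C_1 R_2 - C_2}} \le e^{2\lambda(\abs{\xi})t} \le e^{2t\sqrt{g R_3}}$ uniformly on $\supp f$. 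Pulling these pointwise-in-$\xi$ bounds out of the $\xi$-integral in the displayed expression for $\norm{\eta(t)}_{H^k(\Omega_\pm)}^2$ and comparing with the same integral at $t=0$ yields $e^{t\sqrt{C_1 R_2 - C_2}}\norm{\eta(0)}_{H^k} \le \norm{\eta(t)}_{H^k} \le e^{t\sqrt{g R_3}}\norm{\eta(0)}_{H^k}$, and likewise for $v$ and $q$ (the $\lambda(\abs{\xi})^2$ and $\rho_0^2$ prefactors are the same at times $0$ and $t$, so they cancel in the ratio). That $R_2 \ge R_1 > 2C_2/C_1$ forces $C_1 R_2 - C_2 > C_2 > 0$, giving the strict positivity of the exponent.

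The only real subtlety—and the step I would be most careful about—is the rigorous justification of differentiation under the integral sign and of undoing the horizontal Fourier transform to pass from the ODE system \eqref{w_1_equation}--\eqref{w_3_equation} back to \eqref{linearized} as an identity of functions in $H^k(\Omega)$; everything hinges on the compact support of $f$ and the polynomial-in-$\abs{\xi}$ control from Lemma \ref{sobolev_bounds}, which together make all the relevant integrands dominated by an $L^1(\Rn{2})$ function uniformly for $(x_3,t)$ in compact sets, so dominated convergence and Fubini apply. The remaining verifications (Hermitian symmetry for reality, equivariance, and the norm computations) are then routine bookkeeping.
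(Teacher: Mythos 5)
Your proposal is correct and follows essentially the same route as the paper's proof: verify that each fixed-$\xi$ normal mode solves the linearized system, justify the Fourier synthesis via the compact support of $f$ and the polynomial-in-$\abs{\xi}$ bounds of Lemma \ref{sobolev_bounds}, obtain reality from $f$ being real and radial together with the $SO(2)$-equivariance of Corollary \ref{w_soln}, and derive \eqref{g_m_s_0} and \eqref{g_m_s_3} from the Parseval-type expression for the $H^k$ norm and the two-sided bound $\sqrt{C_1 R_2 - C_2}\le \lambda(\abs{\xi})\le\sqrt{gR_3}$ on $\supp f$. You spell out the dominated-convergence and Hermitian-symmetry bookkeeping more explicitly than the paper does, but the argument is the same.
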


\begin{proof}
For each fixed $\xi\in \Rn{2}$, 
\begin{equation}
 \eta(x,t) =  f(\xi) \hat{w}(\xi,x_3) e^{\lambda(\abs{\xi}) t} e^{i x'\cdot \xi},
\end{equation}
\begin{equation}
 v(x,t) = \lambda(\abs{\xi}) f(\xi) \hat{w}(\xi,x_3) e^{\lambda(\abs{\xi}) t} e^{i x'\cdot \xi},
\end{equation}
and
\begin{equation}
 q(x,t) 
= -\rho_0(x_3) f(\xi) (\xi_1 \varphi(\xi,x_3) +  \xi_2 \theta(\xi,x_3) + \partial_3 \psi(\xi,x_3)) e^{\lambda(\abs{\xi}) t} e^{i x'\cdot \xi}
\end{equation}
gives a solution to \eqref{linearized}.  Since  $\supp(f) \subset B(0,R_3) \backslash B(0,R_2)$, Lemma \ref{sobolev_bounds} implies that 
\begin{equation}
\sup_{\xi \in \supp(f)} \norm{ \partial_{x_3}^k \hat{w}(\xi,\cdot)}_{L^\infty} < \infty \text{ for all } k\in \mathbb{N}.
\end{equation}
Also, $\lambda(\abs{\xi}) \le \sqrt{g \abs{\xi}}$.  These bounds imply that the Fourier synthesis of these solutions given by \eqref{g_m_s_1}--\eqref{g_m_s_2} is also a solution to \eqref{linearized}.  The Fourier synthesis is real-valued because $f$ is real-valued and radial and because of the equivariance in $\xi$ given in Corollary \ref{w_soln}.  This equivariance in $\xi$ also implies the equivariance of $\eta, v, q$ written in \eqref{g_m_s_00}.    

The bound \eqref{g_m_s_0} follows by applying Lemma \ref{sobolev_bounds}  with arbitrary $k \ge 0$ and utilizing the fact that $f$ is compactly supported.  According to \eqref{s_b_1}
\begin{equation}
\sqrt{g R_3} \ge \sqrt{g \abs{\xi}}\ge  \lambda(\abs{\xi}) \ge \sqrt{C_1 \abs{\xi} - C_2} \ge \sqrt{C_1 R_2 - C_2}  >0,
\end{equation}
which then yields the bounds \eqref{g_m_s_3}.
\end{proof}

\begin{remark}
It holds that
\begin{equation}
 \eta_3(x_1,x_2,0,0) = \frac{1}{4\pi^2}  \int_{\Rn{2}} f(\xi)  \psi(\xi,0) e^{i x'\cdot \xi} d\xi 
\end{equation}
is the vertical component of the initial linearized flow map at the interface between the two fluids.  Since $\psi(\xi,0)\neq 0$ for any choice of $\xi$, a nonzero $f$ in general gives rise to a nonzero $\eta_3(x_1,x_2,0,0)$.

%so that $\hat{\eta}_3(\xi_1,\xi_2,0,0) = f(\xi) \psi(\xi,0)$.  This means that the function %$f(\xi) \psi(\xi,0)$ is the Fourier transform of the vertical component of the initial %linearized flow map at the interface between the two fluids.  Since $\psi(\xi,0)\neq 0$ for any %choice of $\xi$, a nonzero $f$ gives rise to a nonzero $\eta_3(x_1,x_2,0,0)$.
\end{remark}

%%%%%%%%%%%%%%%%%%%%%%%%%%%%%%%%%%%%%%%%%%%%%%%%%%%%%%%%%%%%%%%%%%%%
\section{Analysis of the linear problem}
%%%%%%%%%%%%%%%%%%%%%%%%%%%%%%%%%%%%%%%%%%%%%%%%%%%%%%%%%%%%%%%%%%%%

%%%%%%%%%%%%%%%%%%%%%%%%%%%%%%%%%%%%%%%%%%%%%%%%%%%%%%%%%%%%%%
\subsection{Estimates for band-limited solutions}
%%%%%%%%%%%%%%%%%%%%%%%%%%%%%%%%%%%%%%%%%%%%%%%%%%%%%%%%%%%%%%

Suppose that $\eta,v,q$ are real-valued solutions to \eqref{linearized} along with the corresponding jump and boundary conditions (of course, by linearity, we may also handle complex solutions by taking the real and complex parts and proceeding with an analysis of each part).  Further suppose that the solutions are band-limited at radius $R>0$, i.e. that
\begin{equation}
 \bigcup_{x_3 \in (-m,\ell)} \supp( \abs{\hat{\eta}(\cdot,x_3)} +\abs{\hat{v}(\cdot,x_3))} + \abs{\hat{q}(\cdot,x_3)}) \subset B(0,R)
\end{equation}
where $\hat{v}$ denotes the horizontal Fourier transform defined by \eqref{hft}.  The importance of the band-limited assumption lies in the fact that the growing mode exponent remains bounded for $\abs{\xi} \le R$.  We henceforth denote 
\begin{equation}\label{max_def}
\Lambda(R):= \sup_{0\le \abs{\xi} \le R} \lambda(\abs{\xi})  \le \sqrt{g R} < \infty. 
\end{equation}
We will derive estimates for band-limited solutions in terms of $\Lambda(R)$.

It will be convenient to work with a second-order formulation of the equations.  To arrive at this, we differentiate the third equation in \eqref{linearized} in time and eliminate the $q$ and $\eta$ terms using the first and second equations.  This yields the equation
\begin{equation}\label{second_order}
 \rho_0 \partial_{tt} v - \nab(P'(\rho_0)\rho_0 \diverge{v}) +  g \rho_0 \nab v_3 -  g \rho_0 \diverge{v} e_3 =0.
\end{equation}
coupled to the jump conditions
\begin{equation}
\jump{\dt v_3} =0 \text{ and } \jump{ P'(\rho_0) \rho_0 \diverge{v}   } =  0
\end{equation}
and the boundary condition $\dt v_3(x_1,x_2,-m,t) = \dt v_3(x_1,x_2,\ell,t)=0$.  The band limited assumption implies that $\supp(\hat{v}(\cdot,x_3)) \subset B(0,R)$ for all $x_3\in (-m,\ell)$.  The initial data for $\dt v(0)$ is given in terms of the initial data $q(0)$ and $\eta(0)$ via the third linear equation, i.e. $\dt v(0)$ satisfies
\begin{equation}
  \rho_0 \dt v(0) = -\nab(P'(\rho_0) q(0))  - g q(0) e_3 - g \rho_0 \nab \eta_3(0).
\end{equation}

Our first result gives an evolution equation for an energy associated to such solutions.

\begin{lem}\label{lin_en_evolve}
For solutions to \eqref{second_order} it holds that
\begin{equation}
 \dt \int_{\Omega} \left( \frac{\rho_0}{2} \abs{\dt v}^2 + \frac{P'(\rho_0)\rho_0}{2} \abs{\diverge{v} - \frac{g}{P'(\rho_0)} v_3 }^2 \right)
= \dt \int_{\Rn{2}} \frac{g \jump{\rho_0}}{2} \abs{v_3}^2. 
\end{equation}
\end{lem}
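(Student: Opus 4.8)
The plan is to compute the time derivative of the energy directly, using the second-order equation \eqref{second_order} to replace $\rho_0 \dt^2 v$, and then integrate by parts in the spatial variables, carefully tracking the boundary terms at $x_3 = -m, 0, \ell$ that arise because we are working with the piecewise Sobolev space. First I would differentiate the kinetic term: $\dt \int_\Omega \frac{\rho_0}{2}\abs{\dt v}^2 = \int_\Omega \rho_0 \dt v \cdot \dt^2 v$, and substitute $\rho_0 \dt^2 v = \nab(P'(\rho_0)\rho_0 \diverge v) - g\rho_0 \nab v_3 + g\rho_0 \diverge v\, e_3$ from \eqref{second_order}. Next I would differentiate the potential term, whose integrand is $\frac{P'(\rho_0)\rho_0}{2}\abs{\diverge v - \frac{g}{P'(\rho_0)}v_3}^2$; its time derivative is $P'(\rho_0)\rho_0 (\diverge v - \frac{g}{P'(\rho_0)}v_3)(\diverge \dt v - \frac{g}{P'(\rho_0)}\dt v_3)$. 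The goal is to show the sum of these two expressions equals $\dt \int_{\Rn 2} \frac{g\jump{\rho_0}}{2}\abs{v_3}^2 = \int_{\Rn 2} g\jump{\rho_0} v_3 \dt v_3$, evaluated at $x_3 = 0$.

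The key step is the integration by parts on the term $\int_\Omega \dt v \cdot \nab(P'(\rho_0)\rho_0 \diverge v)$. Because functions in the piecewise space need not be smooth across $\{x_3=0\}$, this must be done separately on $\Omega_+$ and $\Omega_-$, producing $-\int_\Omega P'(\rho_0)\rho_0 \diverge v \diverge \dt v$ plus boundary contributions $\int_{\Rn 2} [\dt v_3 \, P'(\rho_0)\rho_0 \diverge v]$ evaluated at the top, bottom, and (with a jump) at $\{x_3=0\}$. At $x_3 = \ell$ and $x_3 = -m$ the term $\dt v_3$ vanishes by the boundary condition. At $x_3 = 0$ the jump contributes $-\jump{\dt v_3 \, P'(\rho_0)\rho_0 \diverge v}$; since $\jump{\dt v_3} = 0$ and $\jump{P'(\rho_0)\rho_0 \diverge v} = 0$ by the jump conditions, and since the product of a continuous function with something having zero jump has zero jump, this boundary term vanishes entirely. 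I would then collect the bulk terms: the $-\int_\Omega P'(\rho_0)\rho_0 \diverge v \diverge \dt v$ piece cancels part of the potential-term derivative, and the remaining terms $-g\int_\Omega \rho_0 \dt v \cdot \nab v_3$, $g\int_\Omega \rho_0 \diverge v \, \dt v_3$, and the cross terms $g\int_\Omega \rho_0(\diverge v\, \dt v_3 + v_3 \diverge \dt v)$ must be reorganized. The natural move is to write $\rho_0 \dt v\cdot\nab v_3 = \nab\cdot(\rho_0 v_3 \dt v) - v_3 \nab\cdot(\rho_0 \dt v) = \nab\cdot(\rho_0 v_3\dt v) - v_3(\rho_0\diverge\dt v + \rho_0' \dt v_3)$, integrate the divergence term (again piecewise, picking up $-\jump{\rho_0 v_3 \dt v_3}$ at $x_3=0$, which by $\jump{v_3}=0$, $\jump{\dt v_3}=0$ equals $-\jump{\rho_0}v_3\dt v_3$), and use $\rho_0' = -g\rho_0/P'(\rho_0)$ from \eqref{enthalpy_eqn} to handle the $\rho_0'\dt v_3$ term, which will combine with the leftover pieces of the potential-term derivative.

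The main obstacle I anticipate is the bookkeeping: there are several bulk terms of the form (constant)$\times g \int_\Omega \rho_0(\cdots)$ involving $\diverge v$, $v_3$, $\diverge\dt v$, $\dt v_3$, and they must be shown to cancel pairwise after one more integration by parts in $x_3$ and the substitution $\rho_0' = -g\rho_0/P'(\rho_0)$. The cleanest route is probably to expand the potential-term derivative as $P'(\rho_0)\rho_0\diverge v\diverge\dt v - g\rho_0\diverge v\dt v_3 - g\rho_0 v_3\diverge\dt v + \frac{g^2\rho_0}{P'(\rho_0)}v_3\dt v_3$ and match term by term with what the kinetic computation produces, verifying that the only surviving global contribution is the interface term $\int_{\Rn 2} g\jump{\rho_0}v_3\dt v_3$ at $x_3=0$. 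Throughout I would use \eqref{enthalpy_eqn} in the form $\rho_0' P'(\rho_0) = -g\rho_0$ to convert $\rho_0'$ factors, the jump conditions $\jump{\dt v_3}=0$, $\jump{P'(\rho_0)\rho_0\diverge v}=0$ to kill the $\{x_3=0\}$ boundary terms except the one with $\jump{\rho_0}$, and the boundary conditions at $x_3=\pm$ endpoints to kill those boundary terms; no decay at spatial infinity in $x'$ is needed beyond the band-limited/Schwartz-type regularity already available, so the horizontal integrations by parts produce no boundary terms.
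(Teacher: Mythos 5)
Your proposal is correct and follows the same route as the paper's proof: dotting the second-order equation with $\dt v$, integrating by parts separately on $\Omega_\pm$, using \eqref{enthalpy_eqn} in the form $\rho_0' = -g\rho_0/P'(\rho_0)$ to absorb the $\rho_0'$ terms, and invoking the jump and boundary conditions to eliminate all interface contributions except the one carrying $\jump{\rho_0}$. The only cosmetic difference is that you kill the $\jump{P'(\rho_0)\rho_0 \diverge v\,\dt v_3}$ boundary term immediately at the interface, whereas the paper carries it to the end and cancels it after adding the $\Omega_+$ and $\Omega_-$ contributions; the algebra is the same.
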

\begin{proof}
Recall that $\Omega_+ = \Rn{2} \times(0,\ell)$.  Take the dot product of \eqref{second_order} with $\dt v(t)$ and integrate over $\Omega_+$.  After integrating by parts and utilizing \eqref{enthalpy_eqn}, we get
\begin{multline}
 \int_{\Omega_+}  \rho_0 \dt v \cdot \partial_{tt}v + P'(\rho_0)\rho_0 (\diverge{v}) (\diverge{\dt v}) -g\rho_0(v_3 \diverge{\dt v} + \dt v_3 \diverge{v}) + \frac{g^2 \rho_0}{P'(\rho_0)} v_3 \dt v_3
\\
= \int_{\Rn{2}} g \rho^+_0  v_3 \dt v_3 
- \int_{\Rn{2}}  P_+'(\rho^+_0)\rho^+_0  \diverge{v} \dt v_3.
\end{multline}
We may then pull the time derivatives outside the first and second integrals to arrive at the equality 
\begin{multline}
 \dt \int_{\Omega_+} \left( \rho_0 \frac{\abs{\dt v}^2}{2} + \frac{P'(\rho_0)\rho_0}{2} \abs{\diverge{v} - \frac{g}{P'(\rho_0)} v_3 }^2 \right)
\\
= \dt \int_{\Rn{2}} g \rho^+_0 \frac{\abs{v_3}^2}{2} 
- \int_{\Rn{2}}  P_+'(\rho^+_0)\rho^+_0  \diverge{v} \dt v_3.
\end{multline}
A similar result holds on $\Omega_- = \Rn{2} \times(-m,0)$ with the opposite sign on the right hand side.  Adding the two together yields
\begin{multline}
 \dt \int_{\Omega} \rho_0 \frac{\abs{\dt v}^2}{2} + \frac{P'(\rho_0)\rho_0}{2} \abs{\diverge{v} - \frac{g}{P'(\rho_0)} v_3 }^2
\\
= \dt \int_{\Rn{2}} g \jump{\rho_0} \frac{\abs{v_3}^2}{2} 
- \int_{\Rn{2}}  \jump{ P'(\rho_0)\rho_0  \diverge{v} \dt v_3 }.
\end{multline}
The jump conditions imply that the second term on the right hand side of this equation vanishes, and the result follows.
\end{proof}

The next result allows us to estimate the energy in terms of $\Lambda(R)$.

\begin{lem}\label{lin_en_bound}
Let $v\in H^1(\Omega)$ be band-limited at radius $R>0$ and satisfy the boundary conditions  $v_3(x_1,x_2,-m,t)=v_3(x_1,x_2,\ell,t)=0$.  Then 
\begin{equation}
\int_{\Rn{2}} \frac{g \jump{\rho_0}}{2} \abs{v_3}^2  -  \int_\Omega \frac{P'(\rho_0)\rho_0 }{2}\abs{ \diverge{v} - \frac{g}{P'(\rho_0)} v_3 }^2 \\
\le \frac{\Lambda^2(R)}{2}\int_\Omega \rho_0 \abs{v}^2, 
\end{equation}
where $\Lambda(R)$ is defined by \eqref{max_def}.
 \end{lem}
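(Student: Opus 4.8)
The plan is to take the horizontal Fourier transform and reduce the stated inequality to a collection of one-dimensional inequalities, one for each frequency $\xi$ with $\abs{\xi}\le R$, which then follow from the variational characterization of $\lambda(\abs{\xi})$. First I would apply Parseval's identity \eqref{parseval} to rewrite each of the three integrals in the statement in terms of the Fourier-transformed quantities $\hat v(\xi,\cdot)$. Using the rotational equivariance already exploited in Section 3, for each fixed $\xi$ I would rotate so that $\xi=(\abs{\xi},0)$ and set $\varphi=i\hat v_1$, $\theta=i\hat v_2$, $\psi=\hat v_3$; the term $\theta$ contributes only nonnegatively to $J$ and not at all to the relevant quadratic form after the completion of the square, so it can only help the inequality and may be dropped. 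This reduces matters to showing, for each $\abs{\xi}\le R$ and each real pair $(\varphi,\psi)\in L^2((-m,\ell))\times H_0^1((-m,\ell))$,
\begin{equation}
\int_{-m}^\ell g\jump{\rho_0}\,\delta_0 \,\tfrac{\psi^2}{2}\;-\;\hal\int_{-m}^\ell P'(\rho_0)\rho_0\Bigl(\psi'+\abs{\xi}\varphi-\tfrac{g}{P'(\rho_0)}\psi\Bigr)^2
\;\le\;\frac{\Lambda^2(R)}{2}\int_{-m}^\ell \rho_0(\varphi^2+\psi^2),
\end{equation}
where the first term is really the boundary value $\tfrac{g\jump{\rho_0}}{2}\psi^2(0)$ coming from the integration by parts.

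The key observation is that the left-hand side is exactly $-E(\varphi,\psi)$: indeed, formula \eqref{energy_div_form} from the proof of Lemma \ref{non_zero_origin} states precisely that
\begin{equation}
E(\varphi,\psi) = -\frac{g}{2}\jump{\rho_0}\psi^2(0) + \hal\int_{-m}^\ell\Bigl(\sqrt{P'(\rho_0)\rho_0}(\psi'+\abs{\xi}\varphi) - \frac{g\sqrt{\rho_0}}{\sqrt{P'(\rho_0)}}\psi\Bigr)^2.
\end{equation}
So the desired inequality is simply $-E(\varphi,\psi) \le \frac{\Lambda^2(R)}{2}\int_{-m}^\ell\rho_0(\varphi^2+\psi^2) = \Lambda^2(R)\,J(\varphi,\psi)$, i.e. $E(\varphi,\psi) \ge -\Lambda^2(R) J(\varphi,\psi)$. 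By homogeneity of $E$ and $J$ this is equivalent to $E(\varphi,\psi)\ge -\Lambda^2(R)$ for all $(\varphi,\psi)\in\mathcal{A}$ with $\abs{\xi}\le R$, which is immediate from the definition $-\lambda^2(\abs{\xi}) = \inf_{\mathcal{A}} E$ together with $\Lambda^2(R) = \sup_{0\le\abs{\xi}\le R}\lambda^2(\abs{\xi})$. Thus each frequency slice is controlled, and reassembling via Parseval gives the claim.

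The main obstacle is bookkeeping rather than conceptual: one must be careful that the integration by parts producing the boundary term $\tfrac{g\jump{\rho_0}}{2}\psi^2(0)$ is justified for a general band-limited $v\in H^1(\Omega)$ (so that $\hat v(\xi,\cdot)\in H^1(I_\pm)$ for a.e. $\xi$ and the traces at $x_3=0,-m,\ell$ make sense), and that the $\theta$-component and the cross terms arising from writing $\diverge v = \xi_1\varphi+\xi_2\theta+\psi'$ (before rotating) are handled correctly — after completing the square the $\theta$ terms appear as $+\hal\int P'(\rho_0)\rho_0(\cdots+\abs{\xi}\text{-projection of }(\varphi,\theta))^2$ on the subtracted side, so dropping them only weakens the left-hand side, which is the direction we want. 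One should also note that $E$ was originally defined for real-valued functions with $\theta$ already eliminated by rotation; since $v$ is real-valued and $\hat v(-\xi,\cdot)=\overline{\hat v(\xi,\cdot)}$, pairing $\xi$ with $-\xi$ and taking real and imaginary parts reduces everything to the real variational problem. Once these routine measure-theoretic points are dispatched, the estimate is a one-line consequence of \eqref{energy_div_form} and the definition of $\Lambda(R)$.
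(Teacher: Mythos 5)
Your proposal is correct and follows essentially the same route as the paper: apply Parseval to reduce to a frequency-by-frequency one-dimensional inequality, use rotational invariance to set $\xi=(\abs{\xi},0)$ so that the $\theta$-component decouples, recognize the resulting quantity as $-E(\varphi,\psi)$ via \eqref{energy_div_form}, and invoke the variational characterization $-\lambda^2(\abs{\xi})=\inf_{\mathcal{A}}E$ together with $\lambda^2(\abs{\xi})\le\Lambda^2(R)$ on the band-limited support. The only cosmetic differences are that the paper reduces to the real case by splitting $Z$ directly into its real and imaginary contributions (rather than pairing $\xi$ with $-\xi$), and that the paper carries $\lambda^2(\abs{\xi})$ pointwise through the estimate and invokes $\Lambda^2(R)$ only at the final integration over $\xi$; neither change is substantive.
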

\begin{proof}
 Take the horizontal Fourier transform \eqref{hft} and apply \eqref{parseval} to see that 
\begin{multline}
4\pi^2\int_{\Rn{2}} \frac{g \jump{\rho_0}}{2} \abs{v_3}^2  -  4\pi^2\int_\Omega \frac{P'(\rho_0)\rho_0 }{2}\abs{ \diverge{v} - \frac{g}{P'(\rho_0) }v_3 }^2 \\
= \int_{\Rn{2}} \frac{g \jump{\rho_0}}{2} \abs{\hat{v}_3}^2 d\xi -  \int_\Omega \frac{P'(\rho_0)\rho_0}{2}\abs{ i\xi_1 \hat{v}_1 + i\xi_2 \hat{v}_2 + \partial_{x_3} \hat{v}_3 - \frac{g}{P'(\rho_0)} \hat{v}_3 }^2 d\xi dx_3\\
=  \int_{\Rn{2}} \left( \frac{g \jump{\rho_0}}{2} \abs{\hat{v}_3}^2- \int_{-m}^\ell \frac{P'(\rho_0)\rho_0}{2}\abs{ i\xi_1 \hat{v}_1 + i\xi_2 \hat{v}_2 + \partial_{x_3} \hat{v}_3 - \frac{g}{P'(\rho_0)} \hat{v}_3 }^2 dx_3 \right) d\xi .
\end{multline}
Consider now the last integrand for fixed $(\xi_1,\xi_2)= \xi \neq 0$, writing $\varphi(x_3)= i \hat{v}_1(\xi_1,\xi_2,x_3)$, $\theta(x_3)= i \hat{v}_2(\xi_1,\xi_2,x_3)$, $\psi(x_3) = \hat{v}_3(\xi_1,\xi_2,x_3)$.  That is, define
\begin{equation}
Z(\varphi,\theta,\psi;\xi) = \frac{g \jump{\rho_0}}{2} \abs{\psi(0)}^2 - \int_{-m}^\ell \frac{P'(\rho_0)\rho_0}{2}\abs{ \xi_1 \varphi + \xi_2 \theta + \psi' - \frac{g}{P'(\rho_0)} \psi }^2 dx_3
\end{equation}
where $' = \partial_{x_3}$.  By splitting
\begin{equation}
 Z(\varphi,\theta,\psi;\xi) = Z(\Re \varphi,\Re \theta,\Re \psi;\xi) +Z(\Im \varphi,\Im \theta,\Im \psi;\xi) 
\end{equation}
we may reduce to bounding $Z$ when $\varphi,\theta,\psi$ are real-valued functions, and then apply the bound to the real and imaginary parts of $\varphi,\theta,\psi$. 

The expression for $Z$ is invariant under simultaneous rotations of $(\xi_1,\xi_2)$ and $(\varphi,\theta)$, so without loss of generality we may assume that $(\xi_1,\xi_2) = (\abs{\xi},0)$ with $\abs{\xi} > 0$.   Then according to \eqref{energy_div_form}, $Z(\varphi,\theta,\psi;\xi) = -E(\varphi,\psi)$  and hence
\begin{equation}
 Z(\varphi,\theta,\psi;\xi) \le \frac{\lambda^2(\abs{\xi})}{2} \int_{-m}^\ell \rho_0 (\varphi^2 +\psi^2),
\end{equation}
where $\lambda(\abs{\xi})$ is the growing-mode exponent constructed in the previous section.  Translating the inequality back to the original notation for fixed $\xi$, we find
\begin{equation}
 \frac{g \jump{\rho_0}}{2} \abs{\hat{v}_3}^2 - \int_{-m}^\ell \frac{P'(\rho_0)\rho_0}{2}\abs{ i\xi_1 \hat{v}_1 + i\xi_2 \hat{v}_2 + \partial_3 \hat{v}_3 - \frac{g}{P'(\rho_0)}\hat{v}_3 }^2 dx_3 \\
\le \frac{\lambda^2(\abs{\xi})}{2} \int_{-m}^\ell \rho_0 \abs{\hat{v}}^2.
\end{equation}
Integrating each side of this inequality over $\xi \in\Rn{2}$, employing the fact that $\lambda^2(\abs{\xi})\le \Lambda^2(R)$ on the support of $\hat{v}$, and using \eqref{parseval} then proves the result.
\end{proof}

We may now derive growth estimates in terms of the initial data and the value of $\Lambda(R)$.

\begin{prop}\label{lin_growth_bound}
Let $v$ be a solution to \eqref{second_order} along with the corresponding jump and boundary conditions that is also band-limited at radius $R>0$.  Then
\begin{multline}
\norm{ v(t)}_{L^2(\Omega)}^2  +\norm{\dt v(t)}_{L^2(\Omega)}^2 \\
 \le  C e^{2\Lambda(R) t}\left(\norm{v(0)}_{L^2(\Omega)}^2 + \norm{\dt v(0)}_{L^2(\Omega)}^2   + \norm{\diverge{v}(0)}_{L^2(\Omega)}^2 \right)
\end{multline}
for a constant $0<C = C(\rho^\pm_0,P_{\pm},g,\Lambda(R),m,\ell)$.
\end{prop}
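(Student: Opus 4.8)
The plan is to close a Gr\"onwall argument for the weighted $L^2$ energy $F(t) := \hal\int_\Omega \rho_0 \abs{v(t)}^2$ by combining the conservation identity of Lemma~\ref{lin_en_evolve} with the spectral bound of Lemma~\ref{lin_en_bound}. Introduce the kinetic, stress, and interface quantities
\begin{equation}
K(t) := \int_\Omega \frac{\rho_0}{2}\abs{\dt v(t)}^2, \quad P(t) := \int_\Omega \frac{P'(\rho_0)\rho_0}{2}\abs{\diverge{v}(t) - \frac{g}{P'(\rho_0)} v_3(t)}^2, \quad G(t) := \int_{\Rn{2}} \frac{g\jump{\rho_0}}{2}\abs{v_3(t)}^2 .
\end{equation}
Integrating the identity of Lemma~\ref{lin_en_evolve} in time gives the conservation law $K(t) + P(t) - G(t) = K(0) + P(0) - G(0)$, while applying Lemma~\ref{lin_en_bound} to $v(t)$ (which is band-limited at radius $R$ and satisfies the top and bottom boundary conditions) yields $G(t) - P(t) \le \Lambda^2(R) F(t)$. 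Subtracting and using $G(0)\ge 0$ gives
\begin{equation}
K(t) \le \Lambda^2(R) F(t) + \bigl(K(0)+P(0)\bigr) =: \Lambda^2(R) F(t) + \mathcal{E}_0 .
\end{equation}
Because $\rho_0$ and $P'(\rho_0)$ are bounded above on $(-m,\ell)$, the constant $\mathcal{E}_0$ is controlled by $\norm{\dt v(0)}_{L^2(\Omega)}^2 + \norm{\diverge{v}(0)}_{L^2(\Omega)}^2 + \norm{v(0)}_{L^2(\Omega)}^2$; crucially, no interface trace of $v(0)$ enters, precisely because the non-sign-definite term $G(0)$ was discarded with the favorable sign.

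Next I would derive a differential inequality for $F$. Differentiating under the integral and using the weighted Cauchy--Schwarz inequality together with the previous bound,
\begin{equation}
F'(t) = \int_\Omega \rho_0\, v\cdot \dt v \le \Bigl(\int_\Omega \rho_0\abs{v}^2\Bigr)^{1/2}\Bigl(\int_\Omega \rho_0\abs{\dt v}^2\Bigr)^{1/2} = 2\sqrt{F(t)K(t)} \le 2\sqrt{F(t)\bigl(\Lambda^2(R)F(t) + \mathcal{E}_0\bigr)} .
\end{equation}
Set $H(t) := F(t) + \mathcal{E}_0/\Lambda^2(R)$; here $\Lambda(R)>0$ since $R>0$ and $\lambda(\abs{\xi})>0$ for $\abs{\xi}>0$ (and if $\mathcal{E}_0=0$ one first replaces $\mathcal{E}_0$ by $\mathcal{E}_0+\ep$ and lets $\ep\to0$ at the end). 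Since $\Lambda^2(R)F(t) + \mathcal{E}_0 = \Lambda^2(R) H(t)$ and $F(t)\le H(t)$, the inequality above becomes $H'(t) = F'(t) \le 2\Lambda(R) H(t)$, and Gr\"onwall gives $H(t) \le e^{2\Lambda(R) t} H(0)$, i.e.
\begin{equation}
F(t) \le e^{2\Lambda(R) t}\Bigl(F(0) + \frac{\mathcal{E}_0}{\Lambda^2(R)}\Bigr).
\end{equation}

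Finally I would translate back to the stated norms: since $\rho_0$ is bounded below by a positive constant, $\norm{v(t)}_{L^2(\Omega)}^2 \le C F(t)$ and $\norm{\dt v(t)}_{L^2(\Omega)}^2 \le C K(t) \le C\bigl(\Lambda^2(R) F(t) + \mathcal{E}_0\bigr)$; substituting the Gr\"onwall bound for $F(t)$ together with $F(0) + \mathcal{E}_0 \le C\bigl(\norm{v(0)}_{L^2(\Omega)}^2 + \norm{\dt v(0)}_{L^2(\Omega)}^2 + \norm{\diverge{v}(0)}_{L^2(\Omega)}^2\bigr)$ yields the claim, with $C$ depending on $\rho^\pm_0, P_\pm, g, \Lambda(R), m, \ell$. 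I expect the only points requiring care to be bookkeeping rather than substance: verifying that $\mathcal{E}_0$ is dominated by just these three $L^2$ quantities (which works because $G(0)$ drops out with sign $\ge 0$), and the routine justification — at the regularity of the band-limited solutions considered — of differentiating $F$ under the integral and of the energy identity itself. The analytic heart of the estimate, the inequality $G(t) - P(t) \le \Lambda^2(R)F(t)$, is already supplied by Lemma~\ref{lin_en_bound}.
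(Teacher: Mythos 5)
Your argument is correct and follows the same route as the paper: integrate the identity from Lemma~\ref{lin_en_evolve}, drop the initial interface term $G(0)$ using its favorable sign ($\jump{\rho_0}>0$), apply Lemma~\ref{lin_en_bound}, and close via Gr\"onwall; you then recover $\norm{\dt v(t)}^2$ by plugging the Gr\"onwall bound back into the spectral inequality, exactly as the paper does. The only cosmetic difference is in the Gr\"onwall step itself: you take the square-root form $F'\le 2\sqrt{F(\Lambda^2 F+\mathcal{E}_0)}$ and absorb $\mathcal{E}_0$ with the substitution $H=F+\mathcal{E}_0/\Lambda^2$, whereas the paper avoids square roots by estimating $\Lambda\,\dt\norm{v}^2 \le \Lambda^2\norm{v}^2 + \norm{\dt v}^2$ (a Young inequality) and then substituting $\norm{\dt v}^2 \le 2A+\Lambda^2\norm{v}^2$, leading to $\dt\norm{v}^2 \le 2\Lambda\norm{v}^2 + 2A/\Lambda$; both deliver the same exponential bound up to the constant $C$.
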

\begin{proof}
Integrate the result of Lemma \ref{lin_en_evolve} in time from $0$ to $t$ to find that
\begin{equation}
  \int_\Omega \rho_0 \frac{\abs{\dt v(t)}^2}{2}  \le A + \int_{\Rn{2}} \frac{g \jump{\rho_0}}{2} \abs{v_3(t)}^2 - \int_\Omega \frac{P'(\rho_0)\rho_0}{2}\abs{ \diverge{v(t)} - \frac{g}{P'(\rho_0)} v_3(t) }^2
\end{equation}
where 
\begin{equation}
 A = \int_\Omega \rho_0 \frac{\abs{\dt v(0)}^2}{2} + \int_\Omega \frac{P'(\rho_0)\rho_0}{2}\abs{ \diverge{v(0)} - \frac{g}{P'(\rho_0)} v_3(0) }^2.
\end{equation}
We may then apply Lemma \ref{lin_en_bound} to get the inequality 
\begin{equation}
  \int_\Omega \rho_0 \frac{\abs{\dt v(t)}^2}{2} 
\le A + \frac{\Lambda^2(R)}{2}\int_\Omega \rho_0 \abs{v(t)}^2 .
\end{equation}
Defining the weighted $L^2$ norm by $\norm{f}^2 = \int_\Omega \rho_0 \abs{f}^2,$ we may compactly rewrite the previous inequality as
\begin{equation}\label{l_g_b_1}
 \hal \norm{\dt v(t)}^2   \le A + \frac{\Lambda^2(R)}{2}\norm{ v(t)}^2. 
\end{equation}

An application of Cauchy's inequality shows that 
\begin{equation}
\Lambda(R) \dt \norm{v(t)}^2 = \Lambda(R) 2\langle \dt v(t), v(t) \rangle \le \Lambda^2(R) \norm{v(t)}^2 + \norm{\dt v(t)}^2,
\end{equation}
where $\langle \cdot,\cdot \rangle$ is the weighted $L^2$ inner-product associated with the weighted $L^2$ norm.  We may combine this inequality with \eqref{l_g_b_1} to derive the differential inequality
\begin{equation}
 \dt   \norm{v(t)}^2 \le B + 2\Lambda(R) \norm{v(t)}^2 
\end{equation}
for $B = 2A/\Lambda(R).$  Gronwall's inequality then shows that
\begin{equation}\label{l_g_b_3}
 \norm{v(t)}^2  \le e^{2\Lambda(R) t}\norm{v(0)}^2 + \frac{B}{2\Lambda(R)} (e^{2\Lambda(R) t}-1)
\end{equation}
for all $t\ge 0$.   To derive the corresponding bound for  $\norm{\dt v(t)}^2$ we return to \eqref{l_g_b_1} and plug in \eqref{l_g_b_3} to see that
\begin{equation}
 \frac{1}{2} \norm{\dt v(t)}^2  \le A + \frac{\Lambda^2(R)}{2}\left( e^{2\Lambda(R) t} \norm{v(0)}^2 + \frac{B}{2\Lambda(R)}(e^{2\Lambda(R) t}-1) \right).
\end{equation}
The result follows by noting that the weighted $L^2$ norm $\norm{\cdot}$ is equivalent to the standard $L^2$ norm $\norm{\cdot}_{L^2(\Omega)}$ and that
\begin{equation}
 A \le C \left(\norm{v(0)}_{L^2(\Omega)}^2 + \norm{\dt v(0)}_{L^2(\Omega)}^2   + \norm{\diverge{v(0)}}_{L^2(\Omega)}^2 \right) 
\end{equation}
for a constant $C>0$ depending on the parameters $\rho^\pm_0, P_{\pm}, g,m,\ell.$
\end{proof}

%%%%%%%%%%%%%%%%%%%%%%%%%%%%%%%%%%%%%%%%%%%%%%%%%%%%%%%%%%%%%%%%%%%%
\subsection{Uniqueness}
%%%%%%%%%%%%%%%%%%%%%%%%%%%%%%%%%%%%%%%%%%%%%%%%%%%%%%%%%%%%%%%%%%%%
In order to employ Proposition \ref{lin_growth_bound} in a proof of uniqueness for solutions to the linearized equations, we must first construct a horizontal spatial frequency projection operator.  To this end let $\Phi\in C_c^\infty(\Rn{2})$ be so that $0\le \Phi \le 1$, $\supp(\Phi) \subset B(0,1)$, and $\Phi(x) =1$ for $x\in B(0,1/2)$.  For $R>0$ let $\Phi_R$ be the function defined by $\Phi_R(x) = \Phi(x/R)$.  We define the projection operator $P_R$ via
\begin{equation}
 P_R f = \mathcal{F}^{-1}(\Phi_R \mathcal{F} f ) 
\end{equation}
for $f\in L^2(\Omega)$, where $\mathcal{F}= \hat{}$ denotes the horizontal Fourier transform in $x_1,x_2$ defined by \eqref{hft}.  It is a simple matter to see that $P_R$ satisfies the following.
\begin{enumerate}
 \item $P_R f$ is band-limited at radius $R$.
 \item $P_R$ is a bounded linear operator on $H^k(\Omega)$ for all $k\ge 0$.
 \item $P_R$ commutes with partial differentiation and multiplication by functions depending only on $x_3$.
 \item $P_R f =0$ for all $R>0$ if and and only if $f=0$.
\end{enumerate}

With this operator in hand we may prove the uniqueness result.
\begin{thm}\label{linear_uniqueness}
Solutions to \eqref{linearized} are unique.
\end{thm}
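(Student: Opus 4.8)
The plan is to combine the band-limited growth estimate of Proposition~\ref{lin_growth_bound} with the horizontal frequency projections $P_R$. Since \eqref{linearized} is linear, it suffices to show that any solution $(\eta,v,q)$ with vanishing initial data $\eta(0)=v(0)=q(0)=0$ must be identically zero.

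First I would reduce to the second-order formulation. Differentiating the third equation of \eqref{linearized} in time and eliminating $\dt\eta=v$ and $\dt q=-\rho_0\diverge v$ via the first two equations shows that $v$ solves \eqref{second_order} together with the jump conditions $\jump{\dt v_3}=0$, $\jump{P'(\rho_0)\rho_0\diverge v}=0$ and the boundary conditions $\dt v_3=0$ at $x_3=-m,\ell$. Evaluating the third equation of \eqref{linearized} at $t=0$ and using $q(0)=0$, $\eta(0)=0$ forces $\rho_0\dt v(0)=0$, hence $\dt v(0)=0$; and $v(0)=0$ gives $\diverge v(0)=0$ as well.

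Next, fix $R>0$ and apply $P_R$ to \eqref{second_order}. Because $P_R$ commutes with $\dt$, with $\nab$ and $\diverge$, and with multiplication by the $x_3$-dependent coefficients $\rho_0$ and $P'(\rho_0)\rho_0$, the function $P_R v$ again solves \eqref{second_order}; it is band-limited at radius $R$, it inherits the jump and boundary conditions (since $P_R$ acts only in the horizontal variables and hence commutes with the $x_3$-traces onto $\{x_3=0\}$ and $\{x_3=-m,\ell\}$), and it has the regularity demanded by Proposition~\ref{lin_growth_bound} since $P_R$ is bounded on each $H^k(\Omega)$. Applying that proposition to $P_R v$ gives
\[
 \norm{P_R v(t)}_{L^2(\Omega)}^2 + \norm{\dt P_R v(t)}_{L^2(\Omega)}^2 \le C e^{2\Lambda(R)t}\left(\norm{P_R v(0)}_{L^2(\Omega)}^2 + \norm{\dt P_R v(0)}_{L^2(\Omega)}^2 + \norm{\diverge P_R v(0)}_{L^2(\Omega)}^2\right).
\]
By the previous step and the commutation of $P_R$ with $\dt$ and $\diverge$, the right-hand side vanishes, so $P_R v(t)=0$ for every $t\ge 0$ and every $R>0$; hence $v\equiv 0$ by the property that $P_R f=0$ for all $R>0$ implies $f=0$. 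Finally $\dt\eta=v=0$ with $\eta(0)=0$ forces $\eta\equiv 0$, and $\dt q=-\rho_0\diverge v=0$ with $q(0)=0$ forces $q\equiv 0$, so the zero solution is the only one with zero data.

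The main obstacle is not the estimate, which Proposition~\ref{lin_growth_bound} supplies, but the verification that these steps are legitimate within whatever regularity class the notion of ``solution'' denotes: one must know that $v$ is differentiable enough in time to pass to \eqref{second_order}, that the relation determining $\dt v(0)$ from the data holds, and that $P_R v$ genuinely meets the hypotheses of Proposition~\ref{lin_growth_bound} --- in particular the $H^1(\Omega)$ regularity together with the jump and boundary conditions. These checks are routine once a precise solution class is fixed, but they carry the actual technical weight of the proof.
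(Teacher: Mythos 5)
Your proof is correct and follows essentially the same approach as the paper's: reduce to zero initial data, project with $P_R$, apply the band-limited growth estimate of Proposition~\ref{lin_growth_bound} to conclude $P_R v\equiv 0$ for every $R$, and then invoke the injectivity property of the family $\{P_R\}$. The only cosmetic difference is the order of two steps (you pass to the second-order formulation before projecting, whereas the paper projects first and then passes to \eqref{second_order}), which does not affect the argument.
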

\begin{proof}
It suffices to show that solutions to \eqref{linearized} with $0$ initial data remain $0$ for $t>0$.  Suppose then that $\eta,v,q$ are solutions with vanishing initial data.  Fix $R>0$ and define $\eta_R = P_R \eta$, $v_R = P_R v$, $q_R = P_R q$.  The properties of $P_R$ show that $\eta_R,v_R,q_R$ are also solutions to \eqref{linearized} but that they are band-limited at radius $R$.  Turning to the second order formulation, we find that $v_R$ is a solution to \eqref{second_order} with initial data $v_R(0)=\dt v_R(0) = 0$.  We may then apply Proposition \ref{lin_growth_bound} to deduce that 
\begin{equation}
 \norm{v_R(t)}_{L^2(\Omega)} = \norm{\dt v_R(t)}_{L^2(\Omega)} =0 \text{ for all } t\ge 0,
\end{equation}
which implies that $\eta_R(t), v_R(t),$ and $q_R(t)$ all vanish for $t\ge 0$.  Since $R$ was arbitrary it must hold that $\eta(t)$, $v(t)$ and $q(t)$ also  vanish for $t\ge 0$.
\end{proof}

%%%%%%%%%%%%%%%%%%%%%%%%%%%%%%%%%%%%%%%%%%%%%%%%%%%%%%%%%%%%%%%%%%%%
\subsection{Discontinuous dependence on the initial data}
%%%%%%%%%%%%%%%%%%%%%%%%%%%%%%%%%%%%%%%%%%%%%%%%%%%%%%%%%%%%%%%%%%%%
The solutions to the linear problem \eqref{linearized} constructed in Theorem \ref{growing_mode_soln} are sufficiently pathological to give rise to a result showing that the solutions depend discontinuously on the initial data.  Then, in spite of the previous uniqueness result, we get that the linear problem is ill-posed in the sense of Hadamard.

\begin{thm}\label{linear_ill_posed}
The linear problem \eqref{linearized} with the corresponding jump and boundary conditions is ill-posed in the sense of Hadamard in $H^k(\Omega)$ for every $k$.  More precisely, for any $k,j\in \mathbb{N}$ with $j\ge k$ and for any $T_0>0$ and $\alpha>0$ there exists a sequence of solutions $\{(\eta_n,v_n,q_n)\}_{n=1}^\infty$ to \eqref{linearized}, satisfying the corresponding jump and boundary conditions, so that
\begin{equation}\label{l_i_p_0}
 \norm{\eta_n(0)}_{H^j} + \norm{v_n(0)}_{H^j} + \norm{q_n(0)}_{H^j} \le  \frac{1}{n}
\end{equation}
but
\begin{equation}\label{l_i_p_00}
 \norm{v_n(t)}_{H^k} \ge \norm{\eta_n(t)}_{H^k} \ge \alpha \text{ for all } t\ge T_0.
\end{equation}
\end{thm}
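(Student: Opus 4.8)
The plan is to exploit the growing mode solutions from Theorem~\ref{growing_mode_soln} directly, choosing the profile function $f$ and a scaling parameter to simultaneously make the initial data arbitrarily small in $H^j$ while forcing growth at the frequency band where $\lambda(\abs{\xi})$ is largest. First I would fix $R_2, R_3$ with $R_1 \le R_2 < R_3 < \infty$ and pick a single nonzero radial $f_0 \in C_c^\infty(\Rn{2})$ supported in $B(0,R_3)\setminus B(0,R_2)$; this gives a fixed solution $(\eta_0, v_0, q_0)$ via \eqref{g_m_s_1}--\eqref{g_m_s_2}. By \eqref{g_m_s_0} applied with $k$ replaced by $j$, the initial data $(\eta_0(0), v_0(0), q_0(0))$ has finite $H^j$ norm, say equal to $M_j > 0$, and by the lower bound in \eqref{g_m_s_3} we have $\norm{\eta_0(t)}_{H^k} \ge e^{t\sqrt{C_1 R_2 - C_2}}\norm{\eta_0(0)}_{H^k}$ for all $t$. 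Note $\norm{\eta_0(0)}_{H^k} > 0$: indeed $\norm{v_0(0)}_{H^k}$ controls $\norm{\eta_0(0)}_{H^k}$ from below? — more carefully, one should check $\eta_0(0)\neq 0$, which follows because $\hat{w}(\xi,x_3)$ has $\psi(\xi,0)\neq 0$ (Theorem~\ref{coupled_solution}) so the synthesis is genuinely nonzero for nonzero $f_0$; denote $\norm{\eta_0(0)}_{H^k} =: N_k > 0$.

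Next I would rescale. Given $n$, set $f_n := c_n f_0$ where $c_n := (n M_j)^{-1}$, and let $(\eta_n, v_n, q_n)$ be the corresponding solution, which by linearity of \eqref{linearized} and of the Fourier synthesis is just $c_n$ times $(\eta_0, v_0, q_0)$. Then
\begin{equation}
\norm{\eta_n(0)}_{H^j} + \norm{v_n(0)}_{H^j} + \norm{q_n(0)}_{H^j} = c_n M_j = \frac{1}{n},
\end{equation}
giving \eqref{l_i_p_0}. For the lower bound, by \eqref{g_m_s_3},
\begin{equation}
\norm{v_n(t)}_{H^k} \ge \norm{\eta_n(t)}_{H^k} = c_n \norm{\eta_0(t)}_{H^k} \ge c_n e^{t\sqrt{C_1 R_2 - C_2}} N_k \ge \frac{N_k}{n M_j} e^{T_0 \sqrt{C_1 R_2 - C_2}}
\end{equation}
for all $t \ge T_0$, using $\norm{v_n(t)}_{H^k}\ge\norm{\eta_n(t)}_{H^k}$ which holds because $v = \lambda \eta$ at each frequency and $\lambda(\abs{\xi}) \ge \sqrt{C_1 R_2 - C_2} > 1$ once $R_2$ is taken large enough (enlarging $R_2$ if necessary — this is permitted since we only need \emph{some} band). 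The right-hand side is a fixed positive constant times $\frac{1}{n}e^{T_0\sqrt{C_1 R_2 - C_2}}$, which decays in $n$, so this crude scaling alone does not yet beat $\alpha$.

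The fix — and the only genuinely nontrivial point — is to decouple the two scalings by also pushing the frequency band to infinity. Instead of a fixed band, for each $n$ choose $R_2^{(n)} \to \infty$ (with $R_3^{(n)} = R_2^{(n)} + 1$, say) and a radial bump $f_0^{(n)}$ supported in the corresponding annulus, normalized so that $\norm{\eta^{(n)}(0)}_{H^k} = 1$ (possible by homogeneity, after checking the $H^k$ norm is positive and finite via Lemma~\ref{sobolev_bounds} and the remark following Theorem~\ref{growing_mode_soln}). Then $\norm{\eta^{(n)}(t)}_{H^k} \ge e^{t\sqrt{C_1 R_2^{(n)} - C_2}} \to \infty$ uniformly on $[T_0,\infty)$, while by \eqref{g_m_s_0} the $H^j$ norm of the initial data is at most $\bar C_j ( \int (1+\abs{\xi}^2)^{j+1}\abs{f_0^{(n)}(\xi)}^2 d\xi )^{1/2}$; I can further multiply by a small constant $\varepsilon_n \to 0$ chosen so that $\varepsilon_n$ times this $H^j$ bound is $\le 1/n$ while $\varepsilon_n e^{T_0 \sqrt{C_1 R_2^{(n)} - C_2}} \to \infty$ — this is achievable because the exponential growth rate $\sqrt{R_2^{(n)}}$ can be made to dominate any polynomial-in-$R_2^{(n)}$ loss from the $H^j$ norm of $f_0^{(n)}$ as long as the band width stays bounded (width $1$), since then $\int (1+\abs{\xi}^2)^{j+1}\abs{f_0^{(n)}}^2 \lesssim (R_3^{(n)})^{2(j+1)}$ times a fixed constant, which is only polynomial. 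Concretely, take $\varepsilon_n := n^{-1}\bar C_j^{-1}(R_3^{(n)})^{-(j+1)}\cdot(\text{fixed const})^{-1}$; then $\varepsilon_n e^{T_0\sqrt{C_1 R_2^{(n)}-C_2}}\to\infty$ provided $R_2^{(n)}\to\infty$, so for $n$ large it exceeds $\alpha$. Relabeling, the resulting sequence $(\eta_n, v_n, q_n) := \varepsilon_n(\eta^{(n)}, v^{(n)}, q^{(n)})$ satisfies \eqref{l_i_p_0} and \eqref{l_i_p_00}. The main obstacle is precisely this bookkeeping: verifying that the polynomial growth of $\norm{f_0^{(n)}}$ in the frequency parameter is dominated by the $e^{t\sqrt{\abs{\xi}}}$ gain, which forces the band width to be kept bounded while its location escapes to infinity; everything else is linearity and direct substitution into Theorem~\ref{growing_mode_soln}.
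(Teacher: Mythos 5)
Your proposal is correct and follows essentially the same strategy as the paper: push the frequency band $[R_2^{(n)},R_3^{(n)}]$ to infinity while keeping its width bounded, so that the exponential growth rate $e^{T_0\sqrt{C_1 R_2^{(n)}-C_2}}$ from Lemma~\ref{eigen_lower} overwhelms the merely polynomial-in-$R^{(n)}$ loss incurred in passing from the $H^k$ norm at time $T_0$ to the $H^j$ norm of the initial data, and then scale by $\varepsilon_n\to 0$ to make the data small. The paper implements the same idea slightly more compactly by normalizing $\int(1+|\xi|^2)^{j+1}|f_n|^2=(\bar C_j n)^{-2}$ directly and choosing $R(n)$ so that the quotient $e^{2T_0\sqrt{C_1R(n)-C_2}}/(1+(R(n)+1)^2)^{j-k+1}$ exceeds $\alpha^2 n^2 \bar C_j^2/B_0^2$, using \eqref{s_b_00} to get a quantitative pointwise lower bound on $\|\hat w(\xi,\cdot)\|_{L^2}$ rather than your qualitative appeal to $\psi(\xi,0)\neq 0$; your bookkeeping has a small inconsistency (the exponent in $\varepsilon_n$ should account for the $(R^{(n)})^{-2k}$ factor coming from the normalization $\|\eta^{(n)}(0)\|_{H^k}=1$, giving $(R^{(n)})^{-(j-k+1)}$ rather than $(R_3^{(n)})^{-(j+1)}$), but this does not affect the conclusion since the loss remains polynomial.
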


\begin{proof}
Fix $j\ge k\ge 0$, $\alpha>0$, $T_0>0$ and let $\bar{C}_j, R_1, B_0, C_1, C_2 >0$ be the constants from Theorem \ref{growing_mode_soln}, Lemma \ref{sobolev_bounds}, and Lemma \ref{eigen_lower} respectively.  For each $n\in\mathbb{N}$ let $R(n)$ be sufficiently large so that $R(n) > R_1$, $\sqrt{C_1 R(n) - C_2}\ge 1$, and 
\begin{equation}
 \frac{\exp(2 T_0\sqrt{C_1 R(n) - C_2})}{(1+(R(n)+1)^2)^{j-k+1}} \ge \alpha^2 n^2 \frac{\bar{C}_j^2}{B_0^2}.
\end{equation}
Choose $f_n \in C_c^\infty(\Rn{2})$ so that $\supp(f_n) \subset B(0,R(n)+1) \backslash B(0,R(n))$, $f_n$ is real-valued and radial,  and
\begin{equation}\label{l_i_p_1}
 \int_{\Rn{2}} (1+\abs{\xi}^2)^{j+1} \abs{f_n(\xi)}^2 d\xi = \frac{1}{\bar{C}_j^2 n^2}.
\end{equation}

We may now apply Theorem \ref{growing_mode_soln} with $f_n$, $R_2 = R(n)$, and $R_3 = R(n) +1$ to find $\eta_n,v_n,q_n$ that solve \eqref{linearized} and the corresponding jump and boundary conditions  so that $\eta_n(t),v_n(t),q_n(t) \in H^j(\Omega)$ for all $t\ge 0$.  By \eqref{g_m_s_0} and the choice of $f_n$ satisfying \eqref{l_i_p_1}, we have that \eqref{l_i_p_0} holds for all $n$.

We may then estimate
\begin{equation}
\begin{split}
 \norm{\eta_n(T_0)}_{H^k}^2 & \ge \int_{\Rn{2}} (1+\abs{\xi}^2)^k \abs{f_n(\xi)}^2 e^{2 t_0 \lambda(\abs{\xi})} \norm{\hat{w}(\xi,\cdot)}^2_{L^2(-m,\ell)} d\xi \\
&\ge \frac{\exp(2 T_0\sqrt{C_1 R(n) - C_2})}{(1+(R(n)+1)^2)^{j-k+1}}   \int_{\Rn{2}} (1+\abs{\xi}^2)^{j+1} \abs{f_n(\xi)}^2 \norm{\hat{w}(\xi,\cdot)}^2_{L^2(-m,\ell)} d\xi \\
& \ge  \alpha^2 n^2 \frac{\bar{C}_j^2}{B_0^2} \int_{\Rn{2}} (1+\abs{\xi}^2)^{j+1} \abs{f_n(\xi)}^2 B_0^2 d\xi = \alpha^2.
\end{split}
\end{equation}
Here the first bound is trivial, the second  follows since  $\supp(f_n) \subset B(0,R(n)+1)$ and $\lambda(\abs{\xi}) \ge \sqrt{C_1 R(n) -C_2}$, and the third follows from the choice of $R(n)$ and the lower bound \eqref{s_b_00}.  Since $\lambda(\abs{\xi}) \ge \sqrt{C_1 R(n) -C_2} \ge 1$ on the support of $f_n$, we also know that
\begin{equation}
\norm{v_n(t)}_{H^k}^2 \ge \norm{\eta_n(t)}_{H^k}^2 \ge  \norm{\eta_n(T_0)}_{H^k}^2 \text{ for } t \ge T_0,
\end{equation}
from which we deduce \eqref{l_i_p_00}.
\end{proof}

%%%%%%%%%%%%%%%%%%%%%%%%%%%%%%%%%%%%%%%%%%%%%%%%%%%%%%%%%%%%%%%%%%%%
\section{Ill-posedness for the non-linear problem}
%%%%%%%%%%%%%%%%%%%%%%%%%%%%%%%%%%%%%%%%%%%%%%%%%%%%%%%%%%%%%%%%%%%%
Recall that the steady state solution to \eqref{lagrangian_equations} is given by $v=0$, $\eta =\eta^{-1} = Id$, $q=\rho_0$ with $A=I$ and $S_-=S_+ = Id_{\{x_3=0\}}$.  We will now rephrase the non-linear equations \eqref{lagrangian_equations} in a perturbation formulation around the steady state.  Let
\begin{equation}
 \eta = Id + \tilde{\eta},\; \eta^{-1} = Id - \zeta,\; v = 0+ v,\; q = \rho_0 + \sigma,\; A = I - B,
\end{equation}
where 
\begin{equation}
 B^T = \sum_{n=1}^\infty(-1)^{n-1} (D \tilde{\eta})^n.
\end{equation}
In order to deal with the term $h(q) = h(\rho_0 + \sigma)$ we introduce the Taylor expansion
\begin{equation}
 h(\rho_0+\sigma) = h(\rho_0) + h'(\rho_0) \sigma + \mathcal{R}
\end{equation}
where the remainder term is defined by
\begin{equation}
 \mathcal{R}(x,t) = \int_0^{\sigma(x,t)} (\sigma(x,t) - z)h''(\rho_0(x) + z) dz = \int_{\rho_0(x)}^{\rho_0(x) + \sigma(x,t)}(\rho_0(x) + \sigma(x,t) - z) h''(z) dz.
\end{equation}

Then the evolution equations  \eqref{lagrangian_equations}  can be rewritten for $\tilde{\eta}, v, \sigma$ as
\begin{equation}\label{perturb}
 \begin{cases}
\dt \tilde{\eta} = v \\
\dt \sigma + (\rho_0+\sigma)(\diverge{v} - \text{tr}(B Dv)) =0 \\
 \dt v + (I-B)\nab(h'(\rho_0)\sigma + g \tilde{\eta}_3 + \mathcal{R}) = 0. 
 \end{cases}
\end{equation}
We require the compatibility between $\zeta$ and $\tilde{\eta}$ given by
\begin{equation}
 \zeta = \tilde{\eta}\circ(Id-\zeta)
\end{equation}
The jump conditions across the interface are
\begin{equation}\label{pert_jc}
 \begin{cases}
  (v_+(x_1,x_2,0,t) - v_-(S_-(x_1,x_2,t),t) )\cdot n(x_1,x_2,0,t)  =0 \\
  P_+( \rho^+_0 + \sigma_+(x_1,x_2,0,t)) = P_-(\rho^-_0 + \sigma_-(S_-(x_1,x_2,t),t))
 \end{cases}
\end{equation}
where the slip map  \eqref{slip_map_def} is rewritten as 
\begin{equation}
 S_- = (Id_{\Rn{2}} - \zeta_-)\circ (Id_{\Rn{2}} + \tilde{\eta}_+) = Id_{\Rn{2}} + \tilde{\eta}_+ - \zeta_-\circ(Id_{\Rn{2}} + \tilde{\eta}_+).
\end{equation}
Finally, we require the boundary condition
\begin{equation}\label{pert_bc}
 v_-(x_1,x_2,-m,t) \cdot e_3 = v_+(x_1,x_2,\ell,t) \cdot e_3 =0.
\end{equation}
We collectively refer to the evolution, jump, and boundary equations \eqref{perturb}--\eqref{pert_bc} as ``the perturbed problem.''  To shorten notation, for $k\ge 0$ we define
\begin{equation}
 \norm{(\tilde{\eta}, v, \sigma)(t)}_{H^k} = \norm{\tilde{\eta}(t)}_{H^k} + \norm{v(t)}_{H^k} + \norm{\sigma(t)}_{H^k}. 
\end{equation}

\begin{definition}\label{EE_def}
We say that the perturbed problem has property $EE(k)$ for some $k\ge 3$ if there exist $\delta,t_0,C>0$ and a function $F:[0,\delta) \rightarrow \Rn{+}$ satisfying $F(z) \le C z$ for $z\in [0,\delta)$ so that the following hold.  For any $\tilde{\eta}_0, v_0, \sigma_0$ satisfying
\begin{equation}
 \norm{(\tilde{\eta}_0, v_0, \sigma_0)}_{H^k} < \delta
\end{equation}
there exist $(\tilde{\eta},v,\sigma)\in L^\infty((0,t_0);H^3(\Omega))$ so that
\begin{enumerate}
 \item $(\tilde{\eta},v,\sigma)(0) = (\tilde{\eta}_0, v_0, \sigma_0)$,
 \item $\eta(t) = Id + \tilde{\eta}(t)$ is invertible and $\eta^{-1}(t) = Id - \zeta(t)$ for $0\le t<t_0$,
 \item $\tilde{\eta},v,\sigma,\zeta$ solve the perturbed problem on $\Omega \times (0,t_0)$, and
 \item  we have the estimate
\begin{equation}\label{EE_est}
 \sup_{0\le t < t_0} \norm{(\tilde{\eta}, v, \sigma)(t)}_{H^3} \le F(\norm{(\tilde{\eta}_0, v_0, \sigma_0)}_{H^k}).
\end{equation}
\end{enumerate}
\end{definition}

Here the $EE$ stands for existence and estimates, i.e. local-in-time existence of solutions for small initial data, coupled to $L^\infty((0,t_0);H^3(\Omega))$ estimates in terms of the $H^k(\Omega)$ norm of the initial data.  If we were to add the additional condition that such solutions be unique, then this trio could be considered a well-posedness theory for the perturbed problem.

We can now show that property $EE(k)$ cannot hold for any $k\ge 3$.  The proof utilizes the Lipschitz structure of $F$ to show that property $EE(k)$ would give rise to certain estimates of solutions to the linearized equations \eqref{linearized} that cannot hold in general because of Theorem \ref{linear_ill_posed}.

\begin{thm}\label{no_wpk}
 The perturbed problem does not have property $EE(k)$ for any $k\ge 3$.
\end{thm}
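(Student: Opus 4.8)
The proof is by contradiction. Suppose the perturbed problem \eqref{perturb}--\eqref{pert_bc} has property $EE(k)$ for some fixed $k\ge3$, with constants $\delta,t_0,C>0$ and a function $F$ satisfying $F(z)\le Cz$ near $0$. The key idea is a scaling argument: the Lipschitz estimate \eqref{EE_est} provides a bound on solutions that is \emph{uniform} under multiplying the data by a small parameter, and in the limit this bound descends to the linearized problem \eqref{linearized} through the uniqueness Theorem \ref{linear_uniqueness}, producing an $H^k\to H^3$ estimate for linear solutions that Theorem \ref{linear_ill_posed} forbids. To set this up, fix an arbitrary solution $X=(\eta,v,q)$ of \eqref{linearized} (with its homogeneous jump and boundary conditions) such that $X(t)\in H^k(\Omega)$ on $[0,t_0]$ --- such solutions exist by Theorem \ref{growing_mode_soln} --- normalized so that $\norm{X(0)}_{H^k}=1$. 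For small $\kappa>0$ the datum $\kappa X(0)$ has $H^k$-norm $\kappa<\delta$, so $EE(k)$ yields $Y^\kappa=(\tilde\eta^\kappa,v^\kappa,\sigma^\kappa)\in L^\infty((0,t_0);H^3(\Omega))$ solving the perturbed problem with $Y^\kappa(0)=\kappa X(0)$, $Id+\tilde\eta^\kappa$ invertible, and $\sup_{0\le t<t_0}\norm{Y^\kappa(t)}_{H^3}\le F(\kappa)\le C\kappa$. Rescaling $W^\kappa:=\kappa^{-1}Y^\kappa$ gives the uniform bound $\norm{W^\kappa}_{L^\infty((0,t_0);H^3(\Omega))}\le C$.

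Next I would identify the equation satisfied by $W^\kappa$. Every nonlinear term in \eqref{perturb} is at least quadratic in $(\tilde\eta,v,\sigma)$: the matrix $B$ is $O(D\tilde\eta)$, the remainder $\mathcal{R}$ is $O(\sigma^2)$, and the extra terms in the $\sigma$-equation are products of perturbations; likewise, expanding the normal $n$, the slip map $S_-$, and $P_\pm(\rho_0^\pm+\sigma_\pm)$ in the jump conditions \eqref{pert_jc} --- and using $P_+(\rho_0^+)=P_-(\rho_0^-)$ from \eqref{pressure_jump} to cancel the constant terms --- the nonlinear contributions to the jump conditions are at least quadratic as well. Hence, after dividing by $\kappa$, the function $W^\kappa$ solves \eqref{linearized} with its homogeneous boundary conditions, with its linearized jump conditions perturbed by interface data of norm $O(\kappa)$, and with a forcing $G^\kappa$ satisfying $\norm{G^\kappa}_{L^\infty((0,t_0);H^2(\Omega))}\le C'\kappa$; here one uses $H^3(\Rn{3})\hookrightarrow C^{1,1/2}$, the algebra property of $H^2$ and $H^3$ in three dimensions, and the convergence of the Neumann series defining $B^\kappa$ (valid since $\norm{D\tilde\eta^\kappa}_{L^\infty}\le C\kappa$). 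Writing the system as $\dt W^\kappa = LW^\kappa + G^\kappa$ with $L$ the (second order) operator of \eqref{linearized}, one also gets that $\dt W^\kappa$ is bounded in $L^\infty((0,t_0);H^1(\Omega))$ uniformly in $\kappa$.

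Now pass to the limit along a subsequence $\kappa_j\downarrow0$: $W^{\kappa_j}\wstar W^\ast$ in $L^\infty((0,t_0);H^3(\Omega))$, and by the uniform bound on $\dt W^{\kappa_j}$ also $W^{\kappa_j}\to W^\ast$ in $C([0,t_0];H^1(\Omega)\text{-weak})$, so $W^\ast(0)=\lim W^{\kappa_j}(0)=X(0)$. Since $G^{\kappa_j}$ and the interface perturbations tend to $0$ in norm while all linear terms pass to the weak limit, $W^\ast$ solves \eqref{linearized} with its homogeneous jump and boundary conditions and with $W^\ast(0)=X(0)$; the regularity $W^\ast\in L^\infty((0,t_0);H^3(\Omega))$, $\dt W^\ast\in L^\infty((0,t_0);H^1(\Omega))$ is ample for Theorem \ref{linear_uniqueness}, which gives $W^\ast=X$ on $[0,t_0]$. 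Weak lower semicontinuity then yields $\sup_{0\le t<t_0}\norm{X(t)}_{H^3}=\sup\norm{W^\ast(t)}_{H^3}\le C=C\norm{X(0)}_{H^k}$, and by homogeneity of \eqref{linearized} this holds for \emph{every} solution lying in $H^k(\Omega)$ on $[0,t_0]$. This contradicts Theorem \ref{linear_ill_posed}: applied with lower index $3$, upper index $k$ (legitimate as $k\ge3$), $T_0=t_0/2<t_0$, and any fixed $\alpha>0$, it produces solutions $(\eta_n,v_n,q_n)$ of \eqref{linearized} with $\norm{\eta_n(0)}_{H^k}+\norm{v_n(0)}_{H^k}+\norm{q_n(0)}_{H^k}\le1/n$ but $\norm{\eta_n(t)}_{H^3}\ge\alpha$ for $t\ge t_0/2$, which (being in $H^k(\Omega)$ for all $t$ by Theorem \ref{growing_mode_soln}) forces $\alpha\le\norm{\eta_n(t_0/2)}_{H^3}\le C/n$ --- absurd for $n>C/\alpha$. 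Hence $EE(k)$ fails for every $k\ge3$.

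I expect the main obstacle to be the second step: carefully recasting \eqref{perturb}--\eqref{pert_bc} for the rescaled unknown $W^\kappa$ and verifying that every nonlinear term, including the delicate jump contributions arising from the slip map $S_-$ and the interface normal $n$, carries a factor of at least $\kappa$ in the relevant Sobolev norms, together with the usual bookkeeping ensuring the limiting object $W^\ast$ is a solution in the sense required by Theorem \ref{linear_uniqueness}. The conceptual point --- and what makes the argument go through despite the linearized problem being ill-posed --- is that no quantitative control of the nonlinear error is needed, only the uniform Lipschitz bound from $EE(k)$ together with weak compactness and linear uniqueness.
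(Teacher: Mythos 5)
Your proposal is correct and follows essentially the same route as the paper's proof: rescale small nonlinear solutions using the Lipschitz bound on $F$, pass to a weak-$*$ limit in $L^\infty((0,t_0);H^3(\Omega))$, identify the limit with the linear solution via Theorem \ref{linear_uniqueness}, and contradict Theorem \ref{linear_ill_posed}. The only differences are cosmetic — the paper fixes the bad solution from Theorem \ref{linear_ill_posed} at the outset and contradicts directly rather than stating a universal $H^k\to H^3$ estimate first, and it invokes an Aubin–Lions (Simon) compactness result to obtain strong convergence in $L^\infty((0,t_0);H^{11/4}(\Omega))$, which makes the passage to the limit in the interface (slip-map and normal) terms cleaner than the weak-continuity-in-time argument you sketch, though both routes can be made to work.
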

\begin{proof}
Suppose by way of contradiction that the perturbed problem has property $EE(k)$ for some $k\ge 3$.  Let $\delta,t_0,C>0$ and $F:[0,\delta)\rightarrow \Rn{+}$ be the constants and function provided by property $EE(k)$.  Fix $n \in \mathbb{N}$ so that $n>C$.  Applying Theorem \ref{linear_ill_posed} with this $n$, $T_0 = t_0/2$, $k\ge 3$, and $\alpha = 1$, we can find $\bar{\eta}, \bar{v}, \bar{\sigma}$ solving \eqref{linearized} so that 
\begin{equation}
 \norm{(\bar{\eta}, \bar{v}, \bar{\sigma})(0)}_{H^k} < \frac{1}{n}
\end{equation}
but
\begin{equation}\label{n_w_20}
 \norm{\bar{v}(t)}_{H^3}\ge \norm{\bar{\eta}(t)}_{H^3} \ge 1 \text{ for }t\ge t_0/2.
\end{equation}
For $\ep>0$ we then define $\bar{\eta}^\ep_0 = \ep  \bar{\eta}(0)$, $\bar{v}^\ep_0 = \ep \bar{v}(0)$, and $\bar{\sigma}^\ep_0 =\ep \bar{\sigma}(0)$. 

Then for $\ep< \delta n$ we have $\norm{(\bar{\eta}^\ep_0, \bar{v}^\ep_0, \bar{\sigma}^\ep_0)}_{H^k}<\delta$, so according to $EE(k)$ there exist  $(\tilde{\eta}^\ep,v^\ep,\sigma^\ep)\in L^\infty((0,t_0);H^3(\Omega))$ that solve the perturbed problem with $(\bar{\eta}^\ep_0, \bar{v}^\ep_0, \bar{\sigma}^\ep_0)$ as initial data and that satisfy the inequality 
\begin{equation}\label{n_w_1}
 \sup_{0\le t < t_0} \norm{(\tilde{\eta}^\ep,v^\ep,\sigma^\ep)(t)}_{H^3} \le F(\norm{(\bar{\eta}^\ep_0, \bar{v}^\ep_0, \bar{\sigma}^\ep_0)}_{H^k}) \le C \ep \norm{(\bar{\eta}, \bar{v}, \bar{\sigma})(0)}_{H^k} < \ep.
\end{equation}
Now define the rescaled functions 
$\bar{\eta}^\ep = \tilde{\eta}^\ep/\ep,  \bar{v}^\ep = v^\ep/\ep, \bar{\sigma}^\ep =  \sigma^\ep/\ep;$ rescaling \eqref{n_w_1} then shows that 
\begin{equation}\label{n_w_2}
 \sup_{0\le t < t_0} \norm{(\bar{\eta}^\ep,\bar{v}^\ep,\bar{\sigma}^\ep)(t)}_{H^3}<1.
\end{equation}
Note that by construction $(\bar{\eta}^\ep,\bar{v}^\ep,\bar{\sigma}^\ep)(0)=(\bar{\eta},\bar{v},\bar{\sigma})(0).$  Our goal is to show that the rescaled functions converge as $\ep \rightarrow 0$ to the solutions $(\bar{\eta},\bar{v},\bar{\sigma})$ of the linearized equations \eqref{linearized}.

We may further assume that $\ep$ is sufficiently small so that 
\begin{equation}\label{n_w_3}
 \sup_{0\le t <t_0} \pnorm{ \ep \bar{\sigma}^\ep(t)}{\infty}   < \hal \inf_{-m\le x_3 \le \ell} \rho_0(x_3)
\end{equation}
and $\ep < 1/(2K_1)$, where $K_1>0$ is the best constant in the inequality 
\begin{equation}
\norm{FG}_{H^2} \le K_1 \norm{F}_{H^2} \norm{G}_{H^2}
\end{equation}
for $3\times 3$ matrix-valued functions $F,G$.  The former condition implies that $\rho_0 + \ep \bar{\sigma}^\ep$ is bounded above and below by positive quantities, whereas the latter guarantees that 
\begin{equation}
 \bar{B}^\ep := (I - (I+\ep D\bar{\eta}^T)^{-1})/\ep
\end{equation} 
is well-defined and uniformly bounded in $L^\infty((0,t_0);H^2(\Omega))$ since
\begin{multline}
 \norm{\bar{B}^\ep}_{H^2} = \norm{ \sum_{n=1}^\infty (-\ep)^{n-1} (D\bar{\eta}^\ep)^n}_{H^2} \le \sum_{n=1}^\infty \ep^{n-1} \norm{(D\bar{\eta}^\ep)^n}_{H^2} \\
\le \sum_{n=1}^\infty (\ep K_1)^{n-1} \norm{D \bar{\eta}^\ep}^n_{H^2}
\le \sum_{n=1}^\infty \frac{1}{2^{n-1}} \norm{\bar{\eta}^\ep}^n_{H^3} < \sum_{n=1}^\infty \frac{1}{2^{n-1}}=2.
\end{multline}
Since $Id + \ep \bar{\eta}^\ep$ is invertible, we may define $\bar{\zeta}^\ep$ via $(Id + \ep \bar{\eta}^\ep)^{-1} = Id - \ep \bar{\zeta}^\ep$, which implies that  $ \bar{\zeta}^\ep  = \bar{\eta}^\ep\circ (Id - \ep \bar{\zeta}^\ep)$.  The  slip map $S_-^\ep: \Rn{2} \times \Rn{+} \rightarrow \Rn{2} \times \{0\}$ is then given by
\begin{equation}
 S^\ep_- = Id_{\Rn{2}} + \ep \bar{\eta}_+^\ep - \ep \bar{\zeta}^\ep_- \circ (Id_{\Rn{2}} + \ep \bar{\eta}^\ep_+).
\end{equation}
The bounds on $\bar{\eta}^\ep$ and the equation satisfied by $\bar{\zeta}^\ep$ then imply that
\begin{equation}\label{n_w_4}
 \sup_{0\le t <t_0} \norm{S_-^\ep(t) - Id_{\Rn{2}}}_{L^\infty} \le 2\ep \sup_{0\le t <t_0} \norm{\bar{\eta}^\ep(t)}_{L^\infty} \le 2 \ep K_2 \sup_{0\le t <t_0} \norm{\bar{\eta}^\ep(t)}_{H^3} < 2\ep K_2, 
\end{equation}
where $K_2>0$ is the embedding constant for the trace map $H^3(\Omega) \hookrightarrow L^\infty(\Rn{2} \times \{0\})$.  This bound allows us to define the normalized slip map $\bar{S}^\ep_- := (S_-^\ep - Id_{\Rn{2}})/\ep$ as a well-defined and uniformly bounded function in $L^\infty((0,t_0);L^\infty(\Rn{2} \times \{0\})$.  Finally, we define the normalized remainder function by
\begin{multline}
 \bar{\mathcal{R}}^\ep(x,t) = \frac{1}{\ep} \int_0^{\ep \bar{\sigma}^\ep(x,t)} (\ep \bar{\sigma}^\ep(x,t) - z)h''(\rho_0(x) + z) dz \\
= \ep \int_0^{ \bar{\sigma}^\ep(x,t)} (\bar{\sigma}^\ep(x,t) - z)h''(\rho_0(x) + \ep z) dz.
\end{multline}
A straightforward calculation employing \eqref{n_w_2} and \eqref{n_w_3} shows that
\begin{equation}\label{n_w_5}
 \sup_{0\le t < t_0} \norm{\bar{\mathcal{R}}^\ep(t)}_{H^3} \le \ep K_3
\end{equation}
for some constant $K_3>0$.

We can now parlay the bounds on $\bar{\eta}^\ep, \bar{v}^\ep, \bar{\sigma}^\ep, \bar{B}^\ep,$ and $\bar{\mathcal{R}}^\ep$ into corresponding bounds on $\dt \bar{\eta}^\ep, \dt \bar{v}^\ep,$ and $\dt \bar{\sigma}^\ep$ and some convergence results.  The first equation in \eqref{perturb} implies that 
$\dt \bar{\eta}^\ep = \bar{v}^\ep$, so that
\begin{equation}\label{n_w_16}
 \sup_{0\le t < t_0} \norm{ \dt \bar{\eta}^\ep(t)}_{H^3} = \sup_{0\le t < t_0}  \norm{  \bar{v}^\ep(t)}_{H^3} \le 1.
\end{equation}
Expanding the second equation in \eqref{perturb}, we find that
\begin{equation}
 (\dt \bar{\sigma}^\ep + \rho_0 \diverge{\bar{v}^\ep}) + \ep (\bar{\sigma}^\ep \diverge{\bar{v}^\ep} - \rho_0 \text{tr}(\bar{B}^\ep D\bar{v}^\ep)  ) - \ep^2 (\bar{\sigma}^\ep \text{tr}(\bar{B}^\ep D\bar{v}^\ep))=0.
\end{equation}
This implies that 
\begin{equation}\label{n_w_11}
\lim_{\ep \rightarrow 0} \sup_{0\le t < t_0} \norm{ \dt \bar{\sigma}^\ep(t) + \rho_0 \diverge{\bar{v}^\ep}(t)  }_{H^2} =0
\end{equation}
and that
\begin{equation}\label{n_w_17}
\sup_{0\le t < t_0} \norm{ \dt \bar{\sigma}^\ep(t)}_{H^2} < K_4 
\end{equation}
for a constant $K_4>0$.  Performing a similar expansion on the third equation in \eqref{perturb} yields
\begin{equation}
 (\dt \bar{v}^\ep + \nab( h'(\rho_0) \bar{\sigma}^\ep + g e_3 \cdot \bar{\eta}^\ep )  ) 
+ \ep (\nab \bar{\mathcal{R}}^\ep - \bar{B}^\ep \nab( h'(\rho_0)\bar{\sigma}^\ep + g e_3 \cdot \bar{\eta}^\ep   )   ) 
+ \ep^2 (\bar{B}^\ep \nab \bar{\mathcal{R}}^\ep)
=0.
\end{equation}
Hence
\begin{equation}\label{n_w_12}
\lim_{\ep \rightarrow 0} \sup_{0\le t < t_0} \norm{ \dt \bar{v}^\ep(t) + \nab( h'(\rho_0) \bar{\sigma}^\ep(t) + g e_3 \cdot \bar{\eta}^\ep(t) ) }_{H^2} =0
\end{equation}
and 
\begin{equation}\label{n_w_18}
\sup_{0\le t < t_0} \norm{ \dt \bar{v}^\ep(t)}_{H^2} < K_5 
\end{equation}
for a constant $K_5>0$.  

We now turn to some convergence results for the jump conditions.  We begin with the second equation in \eqref{pert_jc}, which we expand using the mean-value theorem to get
\begin{multline}\label{n_w_6}
 P_+(\rho^+_0) + P'_+(\alpha^\ep_+ \rho^+_0 + (1-\alpha^\ep_+) \ep \bar{\sigma}_+   ) \ep \bar{\sigma}_+^\ep\\
 = P_-(\rho^-_0) + P'_-(\alpha^\ep_- \rho^-_0 + (1-\alpha^\ep_-) \ep \bar{\sigma}_- \circ(Id_{\Rn{2}} + \ep \bar{S}^\ep_-) ) \ep \bar{\sigma}_-^\ep\circ(Id_{\Rn{2}} + \ep \bar{S}^\ep_-)
\end{multline}
for functions $\alpha^\ep_\pm : \Rn{2}\times\Rn{+} \rightarrow [0,1]$.  From the above bounds on $\bar{\sigma}^\ep$ and $\bar{S}^\ep$ we  have that
\begin{equation}\label{n_w_7}
\sup_{0\le t < t_0} \pnorm{P'_+(\alpha^\ep_+ \rho^+_0 + (1-\alpha^\ep_+) \ep \bar{\sigma}_+   ) - P'_+(\rho^+_0)}{\infty}  \rightarrow 0,
\end{equation}
\begin{equation}
\sup_{0\le t < t_0} \pnorm{P'_-(\alpha^\ep_- \rho^-_0 + (1-\alpha^\ep_-) \ep \bar{\sigma}_- \circ(Id_{\Rn{2}} + \ep \bar{S}^\ep_-) ) - P'_-(\rho^-_0)}{\infty} \rightarrow 0,
\end{equation}
and 
\begin{equation}\label{n_w_8}
\sup_{0\le t < t_0} \pnorm{\bar{\sigma}_-^\ep\circ(Id_{\Rn{2}} + \ep \bar{S}^\ep_-) - \bar{\sigma}_-^\ep }{\infty} \le \sup_{0\le t < t_0} \pnorm{\nab \bar{\sigma}^\ep(t)}{\infty}  \sup_{0\le t < t_0} \pnorm{\ep \bar{S}^\ep(t)}{\infty} \rightarrow 0 \text{ as }\ep \rightarrow 0.
\end{equation}
Since $P_+(\rho^+_0) = P_-(\rho^-_0)$, we may eliminate these terms from equation \eqref{n_w_6} and divide both sides by $\ep$; then employing \eqref{n_w_7}--\eqref{n_w_8}, we deduce that
\begin{equation}\label{n_w_9}
 \sup_{0\le t < t_0} \pnorm{P'_+(\rho^+_0) \bar{\sigma}^\ep_+(t) - P'_-(\rho^-_0) \bar{\sigma}^\ep_-(t)}{\infty} \rightarrow 0 \text{ as }\ep \rightarrow 0.
\end{equation}

For the second equation in \eqref{pert_jc} we first write the normal at the interface as $n^\ep = N^\ep / \abs{N^\ep}$ with
\begin{multline}
 N^\ep = (e_1 + \ep \partial_{x_1} \bar{\eta}^\ep_+) \times (e_2 + \ep \partial_{x_2} \bar{\eta}^\ep_+) \\
= e_3 + \ep (e_1 \times \partial_{x_2} \bar{\eta}^\ep_+ + \partial_{x_1} \bar{\eta}^\ep_+ \times e_2    ) + \ep^2 (\partial_{x_1} \bar{\eta}^\ep_+ \times \partial_{x_2} \bar{\eta}^\ep_+ )
:= e_3 + \ep \bar{N}^\ep.
\end{multline}
As $\ep \rightarrow 0$ we have $\abs{N^\ep} >0$, so we may rewrite the second equation in \eqref{pert_jc} as
\begin{equation}
 (\bar{v}^\ep_+ - \bar{v}^\ep_- \circ (Id_{\Rn{2}} + \ep \bar{S}^\ep_-)   ) \cdot (e_3 + \ep \bar{N}^\ep)=0.
\end{equation}
Clearly $\sup_{0\le t < t_0} \pnorm{\bar{N}^\ep(t)}{\infty}$ is bounded uniformly and
\begin{equation}
\sup_{0\le t < t_0} \pnorm{\bar{v}_-^\ep\circ(Id_{\Rn{2}} + \ep \bar{S}^\ep_-) - \bar{v}_-^\ep }{\infty} \le \sup_{0\le t < t_0} \pnorm{D \bar{v}^\ep(t)}{\infty}  \sup_{0\le t < t_0} \pnorm{\ep \bar{S}^\ep(t)}{\infty} \rightarrow 0 \text{ as }\ep \rightarrow 0.
\end{equation}
Using these, we find that 
\begin{equation}\label{n_w_10}
 \sup_{0\le t < t_0} \pnorm{e_3\cdot(\bar{v}^\ep_+(t) -\bar{v}^\ep_-(t))  }{\infty} \rightarrow 0 \text{ as }\ep \rightarrow 0.
\end{equation}

According to the bound \eqref{n_w_2} and sequential weak-$*$ compactness, we have that up to the extraction of a subsequence (which we still denote using only $\ep$)
\begin{equation}
 (\bar{\eta}^\ep,\bar{v}^\ep,\bar{\sigma}^\ep) \wstar (\bar{\eta}^0,\bar{v}^0,\bar{\sigma}^0) \text{ weakly-}*\text{ in } L^\infty((0,t_0);H^3(\Omega)).
\end{equation}
By lower semicontinuity we know that
\begin{equation}\label{n_w_19}
 \sup_{0\le t < t_0} \norm{(\bar{\eta}^0,\bar{v}^0,\bar{\sigma}^0)(t)}_{H^3} \le 1.
 \end{equation}
On the other hand, by \eqref{n_w_16}, \eqref{n_w_17}, and \eqref{n_w_18}, we know that 
\begin{equation}
\limsup_{\ep \rightarrow 0} \sup_{0\le t < t_0}  \norm{(\dt \bar{\eta}^\ep, \dt \bar{v}^\ep, \dt \bar{\sigma}^\ep)(t)}_{H^2} < \infty.
\end{equation}
By a result in \cite{simon}, we then have that the set $\{(\bar{\eta}^\ep,\bar{v}^\ep,\bar{\sigma}^\ep)\}$ is strongly pre-compact in the space $L^\infty((0,t_0);H^{11/4}(\Omega))$, so 
\begin{equation}
 (\bar{\eta}^\ep,\bar{v}^\ep,\bar{\sigma}^\ep) \rightarrow (\bar{\eta}^0,\bar{v}^0,\bar{\sigma}^0) \text{ strongly in } L^\infty((0,t_0);H^{11/4}(\Omega)).
\end{equation}
This strong convergence, together with the convergence results \eqref{n_w_11}, \eqref{n_w_12} and the equation $\dt \bar{\eta}^\ep=\bar{v}^\ep$, implies that
\begin{equation}
 (\dt \bar{\eta}^\ep, \dt \bar{v}^\ep, \dt \bar{\sigma}^\ep) \rightarrow (\dt \bar{\eta}^0, \dt \bar{v}^0, \dt \bar{\sigma}^0) \text{ strongly in } L^\infty((0,t_0);H^{7/4}(\Omega)),
\end{equation}
and that
\begin{equation}\label{n_w_13}
\begin{cases}
 \dt \bar{\eta}^0 = \bar{v}^0 \\
 \dt \bar{\sigma}^0 + \rho_0 \diverge{\bar{v}^0} = 0  \\
 \dt \bar{v}^0 + \nab( h'(\rho_0) \bar{\sigma}^0 + g e_3 \cdot \bar{\eta}^0 )=0.
\end{cases} 
\end{equation}
We may pass to the limit in the initial conditions $(\bar{\eta}^\ep,\bar{v}^\ep,\bar{\sigma}^\ep)(0)=(\bar{\eta},\bar{v},\bar{\sigma})(0)$ to find that 
\begin{equation}\label{n_w_15}
(\bar{\eta}^0,\bar{v}^0,\bar{\sigma}^0)(0)=(\bar{\eta},\bar{v},\bar{\sigma})(0)
\end{equation}
as well.  We now derive the jump and boundary conditions for the limiting functions. The index $11/4$ is sufficiently large to give $L^\infty((0,t_0);L^\infty)$ convergence of  $(\bar{\eta}^\ep,\bar{v}^\ep,\bar{\sigma}^\ep)$ when restricted to $\{x_3=0\}$, $\{x_3=-m\}$, and $\{x_3=\ell\}$, i.e. the interface and the lower and upper boundaries.  Combining this with \eqref{n_w_9} and \eqref{n_w_10}, we deduce that
\begin{equation}
 P'_+(\rho^+_0) \bar{\sigma}^0_+ = P'_-(\rho^-_0) \bar{\sigma}^0_- \text{ on } \{x_3=0\},
\end{equation}
\begin{equation}
 (\bar{v}^0_+ -\bar{v}^0_-)\cdot e_3 = 0  \text{ on } \{x_3=0\},
\end{equation}
and that
\begin{equation}\label{n_w_14}
 \bar{v}_+^0 \cdot e_3 =0 \text{ on } \{x_3=\ell\} \text{ and } \bar{v}_-^0 \cdot e_3 =0 \text{ on } \{x_3=-m\}.
\end{equation}

Now, according to \eqref{n_w_13}--\eqref{n_w_14}, $(\bar{\eta}^0,\bar{v}^0,\bar{\sigma}^0)$ are solutions to \eqref{linearized} and the corresponding jump and boundary conditions on $\Omega \times (0,t_0)$ that satisfy the initial condition \eqref{n_w_15}.  Then according to Theorem \ref{linear_uniqueness}, 
\begin{equation}
 (\bar{\eta}^0,\bar{v}^0,\bar{\sigma}^0) = (\bar{\eta},\bar{v},\bar{\sigma}) \text{ on } \Omega \times [0,t_0).
\end{equation}
Hence we may chain together inequalities \eqref{n_w_19} and \eqref{n_w_20} to get
\begin{equation}
 2 < \sup_{t_0/2 \le t < t_0} \norm{(\bar{\eta}^0,\bar{v}^0,\bar{\sigma}^0)(t)}_{H^3} \le    \sup_{0\le t < t_0} \norm{(\bar{\eta}^0,\bar{v}^0,\bar{\sigma}^0)(t)}_{H^3} \le 1, 
\end{equation}
which is a contradiction.  Therefore, the perturbed problem does not have property $EE(k)$ for any $k\ge 3$.
\end{proof}

\pagebreak

%%%%%%%%%%%%%%%%%%%%%%%%%%%%%%%%%%%%%%%%%%%%%%%%%%%%%%%%%%%%%%%%%%%%%%%%
%References
%%%%%%%%%%%%%%%%%%%%%%%%%%%%%%%%%%%%%%%%%%%%%%%%%%%%%%%%%%%%%%%%%%%%%%%%

\end{document}